\patchcmd{\section}{\scshape}{\bfseries}{}{}
\renewcommand{\@secnumfont}{\bfseries}
\theoremstyle{plain}
\newcommand*{\Scale}[2][4]{\scalebox{#1}{$#2$}}
\newcommand{\hooklongrightarrow}{\lhook\joinrel\longrightarrow}
\DeclareMathOperator{\Hom}{Hom}
\DeclareMathOperator{\Trop}{Trop}
\DeclareMathOperator{\Spec}{Spec}
\DeclareMathOperator{\Sper}{Spec_r}
\DeclareMathOperator{\Frac}{Frac}
\DeclareMathOperator{\Aut}{Aut}
\DeclareMathOperator{\red}{\mathbf{red}}
\theoremstyle{definition}
\newtheorem{mydef}{\textbf{Definition}}[section]
\newtheorem{myeg}[mydef]{\textbf{Example}}
\newtheorem{mythm}[mydef]{\textbf{Theorem}}
\newtheorem{question}[mydef]{\textbf{Question}}
\newtheorem{rmk}[mydef]{\textbf{Remark}}
\theoremstyle{plain}
\newtheorem*{nothm}{\textbf{Theorem}}
\newtheorem{mytheorem}[mydef]{\textbf{Theorem}}
\newtheorem{lem}[mydef]{\textbf{Lemma}}
\newtheorem{pro}[mydef]{\textbf{Proposition}}
\newtheorem{cor}[mydef]{\textbf{Corollary}}
\begin{document}

\title{Geometry of hyperfields}
\author{Jaiung Jun}
\address{Department of Mathematics, State University of New York at New Paltz, New Paltz, NY 12561, USA}
\curraddr{}
\email{junj@newpaltz.edu}

\subjclass[2010]{14A99(primary), 16Y99 (secondary).}
\keywords{hyperfield, Berkovich analytification, real spectrum, real scheme, locally hyperringed space, rational points, fine topology, representable functor.}

\dedicatory{}

\maketitle

\begin{abstract}
Given a scheme $X$ over $\mathbb{Z}$ and a hyperfield $H$ which is equipped with a topology which satisfies certain conditions, we endow the set $X(H)$ of $H$-rational points with a natural topology. We then prove that; (1) when $H$ is the \emph{Krasner hyperfield}, $X(H)$ is homeomorphic to the underlying space of $X$, (2) when $H$ is the \emph{tropical hyperfield} and $X$ is of finite type over a complete non-Archimedean valued field $k$, $X(H)$ is homeomorphic to the underlying space of the Berkovich analytificaiton $X^{\textrm{an}}$ of $X$, and (3) when $H$ is the \emph{hyperfield of signs}, $X(H)$ is homeomorphic to the underlying space of the real scheme $X_r$ associated with $X$. 
\end{abstract}

\tableofcontents

\section{Introduction}

A \emph{hypergroup} assumes similar axioms as an abelian group except that one allows addition to be \emph{`multi-valued'} (hyperaddition). A \emph{hyperring} $R$ is a nonempty set with two binary operations (multiplication $\cdot$, hyperaddition $+$) such that $(R,\cdot)$ is a commutative monoid, $(R,+)$ is a hypergroup, and hyperaddition is distributive over multiplication. When all nonzero elements are multiplicatively invertible, a hyperring is called a \emph{hyperfield}.

An early incarnation of \emph{hyperstructures} goes back to F.~Marty \cite{marty1935role} who introduced the notion of \emph{hypergroups}. After about 20 years, M.~Krasner adapted Marty's idea to generalize commutative rings to \emph{hyperrings} in \cite{krasner1956approximation}. Krasner's goal was to approximate (in a precise sense) Galois theory of local fields of positive characteristic by means of Galois theory of local fields of characteristic zero. Since its first appearance, hyperrings have not been sufficiently studied from algebro-geometric nor combinatorial perspective. However, lately there has been considerable attention drawn to the theory of hyperrings under various motivations mainly thanks to the following pioneering work: (1) A.~Connes and C.~Consani provide several evidences which show that hyperrings are algebraic structures which naturally appear in relation to algebraic geometry and number theory \cite{con5,con4,con3,con6}, (2) M.~Marshall, P.~G\l{}adki, and K.~Worytkiewicz show that hyperstructures simplify (or generalize) certain aspects of quadratic form theory and real algebraic geometry, \cite{mars1,mar2,gladki2017witt}, (3) O.~Viro implements hyperfields to tropical geometry to provide an algebraic foundation \cite{viro}, and (4) M.~Baker and N.~Bowler unify various generalizations of matroids by means of one unified framework, namely matroids over partial hyperstructures \cite{baker2016matroids}. Baker and Bowler's work additionally supports Viro's philosophy in perspective of valuated matroids. For more details about this viewpoint in connection with O.~Lorscheid's blueprints, see \cite{lorscheid2019tropical}.\\

In this paper, for a scheme $X$, we investigate the set $X(H)$ of rational points over a hyperfield $H$ by appropriately generalizing the notion of locally ringed spaces to locally hyperringed spaces, and $X(H)$ is defined to be a set of morphisms from ``$\Spec H$'' to $X$ in this category. In general, $X(H)$ is merely a set; however, when $H$ is equipped with a topology which satisfies certain conditions, we impose the \emph{fine topology} on $X(H)$, introduced by O.~Lorscheid and C.~Salgado in \cite{lorscheid2016remark} (and also in \cite{baker2018moduli}). 

A hyperfield $H$ is said to be a \emph{topological hyperfield} if $H$ is equipped with topology, satisfying the following conditions:
\begin{itemize}
\item 
The multiplication map $H \times H \to H$, where $H \times H$ is equipped with product topology, is continuous. 
\item 	
$H^\times=H-\{0\}$ is open, and the inversion map $i:H^\times \to H^\times$ is continuous. 
\end{itemize}

Note that in the definition of a topological hyperfield, we do not assume the compatibility of hyperaddition with a topology. This could look odd, however, the above conditions are enough to fulfill our purpose in this paper. We further note that in their work \cite{anderson2019hyperfield} on hyperfield Grassmannians, L.~Anderson and J.~Davis also assume the same conditions (as above) for topological hyperfields (see, \cite[Remark 2.13]{anderson2019hyperfield}). In what follows, we will always assume that any hyperfield, which is equipped with a topology, is a topological hyperfield.\\


Now, let us briefly recall the definition of the fine topology. Let $X=\Spec A$ be an affine scheme and $H$ a topological hyperfield. Then, one imposes a canonical topology (\emph{affine topology}, see \cite{lorscheid2016remark}) on $\Hom(A,H)$, the set of hyperring homomorphisms from $A$ to $H$; we first consider the following set-inclusion
\[
\Hom(A,H) \hooklongrightarrow \prod_{a \in A}H^{(a)}
\]
and give $\Hom(A,H)$ subspace topology, where $\prod_{a \in A}H^{(a)}$ is equipped with product topology induced by a topology $\mathcal{T}$ of $H$. For the general case, when $X$ is a scheme and $H$ is a topological hyperfield, the \emph{fine topology} on the set $X(H)$ of $H$-rational points of $X$ is the finest topology such that any morphism $f:Y \to X$ from an affine scheme $Y$ to $X$ induces a continuous map $f(H):Y(H) \to X(H)$, where $Y(H)$ is equipped with the affine topology. One can easily check that the fine topology agrees with the affine topology when $X$ is affine (see, \S \ref{finetopology}).

With this setting, the main question which we want to address is the following:

\begin{question}\label{question}
Which topological space arises as the set $X(H)$ of rational points of an algebraic variety $X$, where $H$ is a topological hyperfield? Do we have some interesting known-examples of $X(H)$?
\end{question}

In this paper, we will prove that several familiar topological spaces arise in this way. To this end, topological hyperfields of particular interest are the following (see, \S \ref{definitions} for details):
\begin{itemize}
\item (Krasner hyperfield) Let $\mathbb{K}:=\{0,1\}$ be a commutative monoid with the multiplication $1\cdot 0=0$ and $1\cdot 1=1$. The hyperaddition is given by $0+1=1+0=1$, $0+0=0$, and $1+1=\{0,1\}$. Then $\mathbb{K}$ is a hyperfield called the \emph{Krasner hyperfield}. We impose topology on $\mathbb{K}$ in such a way that the set of open subsets is $\{\emptyset, \{1\}, \mathbb{K}\}$. 
\item(Tropical hyperfield) Let $\mathbb{T}:=\mathbb{R}\cup \{-\infty\}$, where $\mathbb{R}$ is the set of real numbers. The multiplication $\odot $ is the usual addition of $\mathbb{R}$ such that $a\odot (-\infty)=(-\infty)$ for all $a \in \mathbb{T}$. The hyperaddition $\oplus$ is to take the maximum when two elements are different, i.e., $a \oplus b =\max\{a,b\}$ if $a\neq b$. When $a=b$, $a\oplus b=\{c \in \mathbb{T} \mid c \leq a\}$, where $\leq$ is the usual order of $\mathbb{R}$ with $-\infty$ the smallest element. Then $\mathbb{T}$ is a hyperfield called the \emph{tropical hyperfield}. We simply impose Euclidean topology on $\mathbb{T}$.  
\item (Hyperfield of signs) Let $\mathbb{S}:=\{-1,0,1\}$ be a commutative monoid with the multiplication $1\cdot 1 =1$, $(-1)\cdot (-1)=1$, $(-1)\cdot 1=(-1)$, and $1\cdot 0 =(-1)\cdot 0 =0\cdot 0 =0$. The hyperaddition follows the rule of signs, i.e., $1+1=1$, $(-1)+(-1)=(-1)$, $1+0=1$, $(-1)+0=(-1)$, and $1+(-1)=\{-1,0,1\}$. Then $\mathbb{S}$ is a hyperfield called the \emph{hyperfield of signs}. We impose topology on $\mathbb{S}$ in such a way that the set of open subsets is $\{\emptyset, \{1\},\{-1\}, \{-1,1\}, \mathbb{S}\}$. 
\end{itemize}
\vspace{0.2cm}



\begin{rmk}
\begin{enumerate}
\item 
The inspiration of the current paper stems from the recent paper \cite{baker2016matroids} of Baker and Bowler, where they develop a very elegant framework which unifies the notions of many enrichments of matroids (matroids, oriented matroids, valuated matroids, and phase matroids) as well as linear spaces at the same time. A key idea is to implement hyperfields in matroid theory to define matroids with coefficients in hyperfields.  
\item 
In \cite{anderson2019vectors}, Anderson provides anther equivalent definition for (strong) matroids over hyperfields in terms of vectors. Furthermore, Anderson and Davis investigate realization spaces for matroids over hyperfields in 
\cite{anderson2019hyperfield}. Also, recently, Baker and Lorscheid explore the moduli space of matroids in \cite{baker2018moduli} as representable functors on certain categories.  
\item 
After I put the current paper on arXiv, in \cite{jell2018real}, P.~Jell, C.~Scheiderer, and J.~Yu recently introduced the notion of real tropicalization and analytification, adding another example of topological spaces which are homeomorphic to the set of $H$-rational points for some hyperfield $H$. 
\end{enumerate}
\end{rmk}

From our point of view, Baker and Bowler's framework could be seen as an investigation of a Grassmannian over various hyperfields (see, the cryptomorphic axiomatization of matroids over hyperfields via Grassmann-Pl\"{u}cker functions in \cite{baker2016matroids}). Hence, one is induced to wonder what we can say when we replace a Grassmannian with other algebraic varieties, or schemes in general (Question \ref{question}). 

\begin{rmk}
One partial answer for Question \ref{question} is initially given by Connes and Consani. In \cite{con3}, Connes and Consani show that there exists a set-bijection between any scheme $X$ over $\mathbb{Z}$ and the set $X(\mathbb{K})$ of $\mathbb{K}$-rational points of $X$, where $\mathbb{K}$ is the Krasner hyperfield. In this paper, we refine this theorem into a homeomorphism.
\end{rmk}

In this paper, we consider three particular topological spaces which appear in algebraic geometry, namely schemes, Berkovich analytification of schemes , and real schemes.
Very roughly, the underlying set of the Berkovich analytification $X^{\textrm{an}}$ of an algebraic variety $X$ over a field $k$ with a complete non-Archimedean valuation $\nu$ consists of pairs of point $x \in X$ and a valuation $\tilde{\nu}$ on the residue field $k(x)$ at $x$ which extends $\nu$ (see \S \ref{berkovich} for some details). 

The real spectrum $\Sper A$ of a commutative ring $A$ is an enrichment of the prime spectrum $\Spec A$ consisting of pairs $(\mathfrak{p},P)$ of a prime ideal $\mathfrak{p}$ of $A$ and an ordering $P$ of the residue field $k(\mathfrak{p})$. Once we globalize this construction, we obtain the real scheme $X_r$ associated to a scheme $X$ (see \S\ref{spaceoforderings} for some details). 

Our main result is that the aforementioned spaces arise as sets of rational points over certain hyperfields in a functorial way. To be precise, we prove the following theorem. 

\begin{nothm}
Let $X$ be a scheme over $\mathbb{Z}$, $k$ be a field with a complete non-Archimedean valuation, $\mathfrak{Schm}$ be the category of schemes over $\mathbb{Z}$, and $\mathfrak{Top}$ be the category of topological spaces. 
\begin{enumerate}
\item 
The set $X(\mathbb{K})$ of $\mathbb{K}$-rational points of $X$ (equipped with the fine topology) is homeomorphic to $|X|$ (the underlying topological space of $X$). In particular, the functor $\mathcal{F}$ from $\mathfrak{Schm}$ to $\mathfrak{Top}$, sending any scheme $X$ to $|X|$, is isomorphic to the functor $\Hom(\Spec \mathbb{K},-)$.  
\item
Let $X$ be a scheme of finite type over $k$. Then the Berkovich analytification $X^{\textrm{an}}$ of $X$ is homeomorphic to $X(\mathbb{T})$ (equipped with the fine topology). In particular, the functor $\mathcal{A}$ from the category $\mathfrak{Schm}_{k,fin}$ of schemes of finite type over $k$ to $\mathfrak{Top}$, sending any scheme $X$ to the underlying topological space $|X^{\textrm{an}}|$ of the analytification $X^{\textrm{an}}$, is isomorphic to the functor $\Hom(\Spec \mathbb{T},-)$.  
\item
The set $X(\mathbb{S})$ of $\mathbb{S}$-rational points of $X$ (equipped with the fine topology) is homeomorphic to the underlying topological space of the real scheme $X_r$ associated to $X$. In particular, the functor $\mathcal{R}$ from $\mathfrak{Schm}$ to $\mathfrak{Top}$, sending any scheme $X$ to the underlying topological space $|X_r|$ of the associated real scheme $X_r$, is isomorphic to the functor $\Hom(\Spec \mathbb{S},-)$.  
\end{enumerate}
\end{nothm}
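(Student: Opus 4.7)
The plan is to reduce each of the three statements to its affine case and then globalize via the universal property defining fine Zariski topology. For each hyperfield $H \in \{\mathbb{K}, \mathbb{T}, \mathbb{S}\}$ I would (i) exhibit a natural bijection between $\Hom(A,H)$ and the set of enriched points of $\Spec A$ parameterized by the target space, (ii) verify that this bijection is a homeomorphism by matching subbasic opens of the affine topology on $\Hom(A,H)$ with a subbasis of the target topology, and then (iii) patch along a Zariski-affine cover to obtain the global statement.

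For (1), the bijection $\phi \leftrightarrow \phi^{-1}(0)$ between $\Hom(A,\mathbb{K})$ and $|\Spec A|$ is due to Connes and Consani. Since the only nonempty proper open of $\mathbb{K}$ is $\{1\}$, a subbasis for the affine topology consists of the sets $\{\phi : \phi(a)=1\}=\{\phi : a \notin \phi^{-1}(0)\}$, which correspond exactly to the principal Zariski opens $D(a)$. For (2), a hyperring homomorphism $\phi : A \to \mathbb{T}$ that restricts to the valuation on $k$ is, by inspection of the tropical operations, precisely a non-Archimedean multiplicative seminorm on $A$ extending $|\cdot|_k$ in logarithmic scale: multiplicativity of $\phi$ matches the monoid law of $\mathbb{T}$, while the ultrametric inequality matches $a\oplus b=\max\{a,b\}$ for $a \neq b$ together with $a\oplus a = \{c \leq a\}$. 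This identifies the set with the Berkovich spectrum $\mathcal{M}(A)$, and the affine topology, being the coarsest topology making every evaluation continuous, coincides with Berkovich topology. For (3), the kernel of a hyperring homomorphism $\phi : A \to \mathbb{S}$ is a prime ideal $\mathfrak{p}$, and the induced map $k(\mathfrak{p}) \to \mathbb{S}$ is exactly the sign function of an ordering of $k(\mathfrak{p})$ with positive cone $\phi^{-1}(1)$. Since the open sets of $\mathbb{S}$ are $\emptyset, \{1\}, \{-1\}, \{-1,1\}, \mathbb{S}$, a subbasis of the affine topology is given by $\{\phi : \phi(a)=\pm 1\}$ and $\{\phi : \phi(a) \neq 0\}$, matching the standard subbasis for the real-spectrum topology.

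Globalization proceeds formally. By the universal property of fine Zariski topology, for every morphism $Y \to X$ from an affine scheme the induced map $Y(H) \to X(H)$ is continuous; conversely, to see that the resulting map $X(H) \to T(X)$ — where $T(X)$ is the target $|X|$, $\Xan$, or $|X_r|$ — is a homeomorphism, one verifies that $T(X)$ is the colimit in $\mathfrak{Top}$ of its affine pieces under the appropriate restriction functor. This is immediate for $|X|$ and $|X_r|$, which are defined by sheafifying their affine pieces, and for the Berkovich case it follows from the construction of $\Xan$ by gluing affine Berkovich spectra along distinguished opens. Naturality in $X$ then upgrades each homeomorphism to representability of $\mathcal{F}$, $\mathcal{A}$, $\mathcal{R}$ by $\mathbb{K}$, $\mathbb{T}$, $\mathbb{S}$ respectively.

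I expect the main obstacle to be the gluing step in case (2): one must check that hyperring homomorphisms $A \to \mathbb{T}$ compatible with the valuation on $k$ are determined by their restrictions to distinguished affine opens in a manner compatible with the standard gluing of Berkovich spectra, and that the required sheaf-theoretic setup makes sense for locally \emph{hyperringed} spaces. This is the point where the general theory of locally hyperringed spaces (developed earlier in the paper) must do real work; the other two cases globalize more transparently, since the target topological spaces are themselves constructed as colimits of their affine pieces.
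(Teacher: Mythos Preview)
Your proposal is correct and follows essentially the same architecture as the paper: establish the bijection and homeomorphism in the affine case by matching subbasic opens, then globalize. The affine arguments you sketch for $\mathbb{K}$, $\mathbb{T}$, $\mathbb{S}$ are exactly those the paper gives.

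The one point of divergence is the globalization step. You frame it as a colimit argument (showing $T(X)$ is the colimit of its affine pieces and that $X(H)$ is too), and you single out case (2) as the delicate one. The paper instead proves a single uniform lemma (Proposition~\ref{openembedding}): whenever $H\setminus\{0_H\}$ is open in $H$, an affine open cover $\{U_i\}$ of $X$ yields an open cover $\{U_i(H)\}$ of $X(H)$ in the fine topology. Combined with the global set bijection coming from Theorem~\ref{mainlemma} (morphisms $\Spec H \to X$ correspond to pairs $(x, k(x)\to H)$), this reduces all three cases to the affine homeomorphism simultaneously, with no additional gluing verification needed. So your anticipated obstacle in case (2) does not materialize: the locally hyperringed space machinery is used once, for the pointwise description of $X(H)$, and the Berkovich case globalizes no differently from the other two.
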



When $X$ is a group scheme over a field $k$, the underlying topological space $|X|$ itself is not a group; we only have a group structure for the set $X(K)$ of $K$-rational points for each field extension $K$ of $k$. Therefore, the identification $|X|=X(\mathbb{K})$ naturally leads one to the following question:

\begin{question}
Let $X$ be a group scheme over a field $k$. Is the underlying space $|X|$ itself a hypergroup by viewing it as the set of $\mathbb{K}$-rational points, where $\mathbb{K}$ is the Krasner hyperfield? More generally, if $H$ is a hyperfield, is $X(H)$ a hypergroup?
\end{question}

In \cite{con4}, Connes and Consani prove that the answer is affirmative when $X$ is an affine line or an algebraic torus and $H=\mathbb{K}$ by explicitly describing the hypergroup structures. In \cite{jun2016hyperstructures}, the author also proves that $X(\mathbb{K})$ is ``almost'' a hypergroup (in a precise sense) when $X$ is of finite type. In $\S \ref{hyperstructures of analytic groups}$, we study the case when $H=\mathbb{T}$ and $X$ is of finite type, following the idea of Berkovich in \cite[\S 5]{berkovich2012spectral}. 

Throughout the paper, we assume that all algebraic structures (rings, $k$-algebras, hyperrings, etc) are commutative unless otherwise stated. \\

\textbf{Acknowledgment}
\vspace{0.1cm}

The author would like to thank Vladimir Berkovich and Youngsu Kim for answering various questions concerning $\S \ref{hyperstructures of analytic groups}$. He also thanks Oliver Lorscheid for pointing out minor mistakes which had escaped from the author, and Farbod Shokrieh for answering various questions on Berkovich analytifications. The author is grateful to Jeffrey Giansiracusa for providing helpful feedback for the first draft of the paper. Finally, the author thanks an anonymous referee for many helpful suggestions.

\section{Review: Basic definitions and examples}\label{definitions}

In this section, we recall basic definitions and examples for hyperrings which will be used in the sequel. For more details, we refer the readers to \cite{con3}, \cite{jun2015algebraic}, or \cite{viro}. 

\begin{mydef}
Let $H$ be a nonempty set and $P^*(H)$ be the set of nonempty subsets of $H$.
\begin{itemize}
\item 
By a \emph{hyperoperation} on $H$, we mean a function $*:H\times H \to P^*(H)$. For the notational convenience, we let $a*b:=*(a,b)$ for all $a,b \in H$. 
\item
For $x,y,z \in H$, we define the following two subsets of $H$:
\[
(x*y)*z:=\bigcup_{w\in x*y}w*z, \quad \textrm{and} \quad x*(y*z):=\bigcup_{w\in y*z} x*w. 
\]
When $(x*y)*z=x*(y*z)$ for all $x,y,z \in H$, we say that a hyperoperation $*$ is \emph{associative}. 
\item
When $x*y=y*x$ for all $x,y \in H$, we say that a hyperoperation $*$ is \emph{commutative}. 
\item
When the set $x*y$ consists of a single element $z$, we write $x*y=z$ instead of $x*y=\{z\}$. In general, we will identify an element $x \in H$ with the subset $\{x\}$ of $H$. 
\end{itemize}
\end{mydef}

\begin{rmk}
Let $H$ be a nonempty set with a hyperoperation $*$ and $A$, $B$ be nonempty subsets of $H$. Then we will use the following notation:
\[
A*B:=\bigcup_{a \in A, b\in B} a*b. 
\]
\end{rmk}

\begin{mydef}
A \emph{hypergroup} is a nonempty set $H$ equipped with an associative hyperoperation $*$ such that
\begin{enumerate}
\item (Unique identity)
$\exists ! e\in H$ such that $e*x=x*e$. 
\item (Unique inverse)
For each $x \in H$, $\exists ! y \in H$ such that $e \in (x*y) \cap (y*x)$. We denote $y$ by $x^{-1}$. 
\item (Reversibility)
For each $x,y,z \in H$, if $x \in y*z$ then $y \in x*z^{-1}$ and $z \in y^{-1}*x$. 
\item 
When a hyperoperation is commutative, we call $(H,*)$ a \emph{canonical hypergroup}. In this case, we will use additive notations such as $+, \oplus, \boxplus$. 
\end{enumerate}
\end{mydef}

\begin{rmk}
In \cite{jun2016hyperstructures}, we did not include the reversibility $(3)$ as a part of the definition for hypergroups. 
\end{rmk}

\begin{mydef}
A \emph{hyperring} is a nonempty set $R$ with two binary operations $+$ and $\cdot$ such that $(R,+)$ is a canonical hypergroup and $(R,\cdot)$ is a commutative monoid satisfying the following conditions:
\begin{enumerate}
\item (Distributivity)
$x\cdot(y+z)=x\cdot y +x \cdot z$ for all $x,y,z \in R$. 
\item (Absorbing element)
If $0$ is the identity element with respect to a hyperoperation. Then $x\cdot 0 =0$ for all $x \in R$. We further assume that $0 \neq 1$. 
\end{enumerate}
When each $x \neq 0 \in R$ has a multiplicative inverse, we call $(R,+,\cdot)$ a \emph{hyperfield}. 
\end{mydef}

\begin{myeg}\label{mainexample}
We introduce some examples of hyperfields which yield interesting results in matroid theory. 
\begin{itemize}
\item 
Let $\mathbb{K}:=\{0,1\}$. The multiplicative structure of $\mathbb{K}$ is same as that of $\mathbb{F}_2$, the field with two element. The commutative hyperaddition is defined as follows:
\[
0+0=0, \quad 0+1=1, \quad 1+1=\{0,1\}. 
\]
$\mathbb{K}$ is called the \emph{Krasner hyperfield}. 
\item
Let $\mathbb{S}:=\{-1,0,1\}$. We impose the multiplication following the rule of signs:
\[
1 \cdot 1=1, \quad 1\cdot (-1) =(-1), \quad 1\cdot 0=0\cdot (-1)=0\cdot 0=0. 
\]
The hyperaddition also follows the rule of signs:
\[
1+1=1+0=1, \quad (-1)+(-1)=(-1)+0=(-1), \quad 1+(-1)=\mathbb{S}, \quad 0+0=0. 
\]
$\mathbb{S}$ is called the \emph{hyperfield of signs}. 
\item
Let $\mathbb{T}:=\mathbb{R}\cup \{-\infty\}$. The multiplication $\odot$ of $\mathbb{T}$ is same as the usual addition of real numbers and $-\infty \odot a =-\infty$ for all $a \in \mathbb{T}$. The hyperaddition is given as follows:
\[
x\oplus y =\left\{ \begin{array}{ll}
\max\{x,y\} & \textrm{if $x\neq y$}\\
\left[-\infty,x\right]& \textrm{if $x=y$},
\end{array} \right.
\]
where $\left[-\infty,x\right]:=\{t\in \mathbb{T} \mid t\leq x\}$. $\mathbb{T}$ is called the \emph{tropical hyperfield}. 
\item 
Let $(\Gamma,+)$ be a totally ordered abelian group and $\Gamma_{hyp}:=\Gamma \cup \{-\infty\}$. One can impose two binary operations $\odot$ and $\oplus$ on $\Gamma_{hyp}$ as follows:
\[
a \odot b:=a+b \textrm{ and } a\odot (-\infty)=-\infty, \textrm{for all }a,b \in \Gamma,
\]
\[
a\oplus b =\left\{ \begin{array}{ll}
\max\{a,b\} & \textrm{if $a\neq b$}\\
\left[-\infty,a\right]& \textrm{if $a=b$},
\end{array} \right. \textrm{ where } \left[-\infty,a\right]:=\{t\in \Gamma_{hyp} \mid t\leq a\}.
\]
Then $(\Gamma_{hyp},\odot,\oplus)$ is a hyperfield. In particular, if $\Gamma=\mathbb{R}$, then $\Gamma_{hyp}=\mathbb{T}$.

\item
Let $\mathbb{P}:=S^1 \cup \{0\}$, where $S^1$ is the unit circle in the complex plane. The multiplication of $\mathbb{P}$ is induced from the multiplication of complex numbers, and we define the hyperaddition as follows:
\[
x\oplus y =\left\{ \begin{array}{ll}
\{-x,0,x\} & \textrm{if $x= -y$}\\
\textrm{ the shorter open arc connecting $x$ and $y$}& \textrm{if $x \neq -y$},
\end{array} \right.
\]
$\mathbb{P}$ is called the \emph{phase hyperfield}. 
\end{itemize}
\end{myeg}

There is a recipe to produce a hyperring from a commutative ring $A$; let $A$ be a commutative ring and $G$ be a subgroup of the multiplicative group $A^\times$ of units in $A$. Then $G$ acts (by multiplication) on $A$. Let $A/G$ be the set of equivalence classes under the action of $G$ and $[a]$ be the equivalence class of $a \in A$. One defines the following binary operations:
\[
[a]\cdot[b]=[ab], \quad [a]+[b]=\{[c] \mid c=g_1a+g_2b \textrm{  for some } g_1,g_2 \in G\}. 
\]
Then $(A/G,+,\cdot)$ is a hyperring (a \emph{quotient hyperring}).

\begin{mydef} (\emph{Homomorphisms of hyperrings})
\begin{enumerate}
\item
Let $H_1$ and $H_2$ be hypergroups with the identities $e_1$ and $e_2$ respectively. A \emph{homomorphism} of hypergroups is a function $f:H_1\to H_2$ such that $f(e_1)=e_2$ and $f(a*b)\subseteq f(a)*f(b)$ for all $a,b \in H_1$. When a homomorphism $f$ satisfies the stronger condition $f(a*b)=f(a)*f(b)$ for all $a,b \in H_1$, we call $f$ \emph{strict}.  
\item
Let $R_1$ and $R_2$ be hyperrings. A \emph{homomorphism} of hyperrings is a function $f:R_1 \to R_2$ such that $f:(R_1,+)\to (R_2,+)$ is a homomorphism of hypergroups and $f:(R_1,\cdot)\to (R_2,\cdot)$ is a homomorphism of monoids. When $f:(R_1,+)\to (R_2,+)$ is a strict homomorphism of hypergroups, we call $f$ \emph{strict}. 
\end{enumerate}
\end{mydef}

\begin{myeg}
Let $R$ be a hyperfield. Then there exists a \emph{unique homomorphism} from $R$ to the Krasner hyperfield; $\pi:R \to \mathbb{K}$ sending $a\neq 0$ to $1$ and $0$ to $0$. In other words, $\mathbb{K}$ is the final object in the category of hyperfields.
\end{myeg}

\begin{rmk}
There are other algebraic structures which generalize commutative rings. In fact, A.~Dress and W.~Wenzel introduce in \cite{dress1991grassmann} the notion of \emph{fuzzy rings} to unify various generalizations of matroids (as in \cite{baker2016matroids}) and also utilize fuzzy rings to recast basic definitions of tropical varieties in \cite{dress2011algebraic}. In \cite{giansiracusa2016relation}, together with J.~Giansiracusa and O.~Lorscheid, we clarify the relation between hyperrings and fuzzy rings to link Baker-Bowler theory and Dress-Wenzel theory. Also, in \cite{rowen2016algebras}, L.~Rowen introduces the notion of \emph{systems} and \emph{triples} to generalize both fuzzy rings and hyperrings along with other algebraic structures. 
\end{rmk}

\section{Algebraic geometry over hyperrings}
\subsection{Locally hyperringed spaces and hyperring schemes}
We first review some basic definitions and properties for \emph{locally hyperringed spaces} and \emph{hyperring schemes} studied in \cite{jun2015algebraic}. We also slightly generalize some of important results in \cite{jun2015algebraic} which serve as main technical tools in this paper.

Baker and Bowler made a remark in \cite{baker2016matroids} that a (semi)valuation (resp. an ordering) on a commutative ring $A$ can be interpreted as a homomorphism from $A$ to the tropical hyperfield $\mathbb{T}$ (resp. the hyperfield $\mathbb{S}$ of signs). The main reason to introduce locally hyperringed spaces and hyperring schemes is to deal with the general case when a scheme is not affine. In other words, a homomorphism from a commutative ring $A$ to a hyperfield $H$ should be replaced by a morphism $\Spec H \to \Spec A$ of locally hyperringed spaces to extend Baker and Bowler's observation to the non-affine case. All necessary details, which we omit in this section, can be found in \cite{jun2015algebraic}. 

\begin{mydef}\label{primeideal}
Let $R$ be a hyperring.
\begin{enumerate}
\item 
An \emph{ideal} $I$ of $R$ is a sub-hypergroup (a subset which is a hypergroup itself with the induced hyperoperation and the identity) of $R$ such that $RI\subseteq I$. 
\item
One defines a \emph{prime ideal} to be the kernel of a homomorphism $\varphi:R\to\mathbb{K}$, where $\mathbb{K}$ is the Krasner hyperfield.
\item
A \emph{maximal ideal} is a proper ideal $\mathfrak{m}$ of $R$, that is $\mathfrak{m}\neq R$, which is not properly contained in any other proper ideal of $R$. 
\end{enumerate}
 
\end{mydef}

\begin{rmk}
One may define a prime ideal of $R$ as in the classical definition; an ideal $\mathfrak{p}$ of $R$ such that $(R\backslash \mathfrak{p},\cdot)$ is a multiplicative monoid. But, one can easily show that this definition is equivalent to Definition \ref{primeideal}. Also, as in the classical case, any maximal ideal of a hyperring $R$ is prime. 
\end{rmk}

\begin{rmk}
When $R$ is a hyperfield, $R$ has a unique prime ideal, namely $\{0_R\}$.
\end{rmk}

Let $R$ be a hyperring and $\mathfrak{p}$ be a prime ideal of $R$. The localization $R_\mathfrak{p}$ of $R$ at $\mathfrak{p}$ is a hyperring with the underlying set:
\[
R_\mathfrak{p}:=(R \times S)/ \sim,  
\]
where $\sim$ is an equivalence relation on $(R \times S)$ such that $(r,a) \sim (r_1,a_1)$ if and only if there exists $c \in S:=R-\mathfrak{p}$ such that $cra_1=cr_1a$. We write $\frac{r}{a}$ for the equivalence class of $(r,a)$. Now, one imposes the following hyperaddition and multiplication:
\[
\frac{r}{a}+\frac{r'}{a'}:=\{\frac{c}{aa'} \mid c \in ar'+a'r\}, \quad \frac{r}{a}\cdot\frac{r'}{a'}:=\frac{rr'}{aa'}. 
\]
Equipped with these two operations, $R_\mathfrak{p}$ becomes a hyperring. Furthermore, we have a canonical map: $S^{-1}:R \to R_\mathfrak{p}$ sending $r$ to $\frac{r}{1}$ and this homomorphism is injective if $R$ does not have any zero-divisor. Note that in fact the localization procedure can be done at any multiplicative subset $S$ of $R$ and also satisfies the universal property as in the case of commutative rings. 

\begin{mydef}
Let $R$ be a hyperring and $X=\Spec R$ be the set of all prime ideals of $R$. Then one imposes Zariski topology on $X$ by declaring that closed sets are of the form $V(I):=\{\mathfrak{p} \in X \mid I\subseteq \mathfrak{p}\}$ for some ideal $I$ of $R$. 
\end{mydef}

\begin{rmk}
In \cite{jun2015algebraic}, it is proven that, when $A$ is a $k$-algebra and $R=A/k^\times$, the prime spectra $\Spec A$ and $\Spec R$ are homeomorphic. One can easily confirm the set bijection. In fact, we have that $\Spec A=\Hom(A,\mathbb{K})$, $\Spec R=\Hom(A/k^\times,\mathbb{K})$, and $\Hom(A,\mathbb{K})=\Hom(A/k^\times,\mathbb{K})$. 
\end{rmk}

Let $R$ be a \emph{hyperdomain}, i.e., $R$ is a hyperring without multiplicative zero-divisors. In this case, we can construct a structure sheaf (of hyperrings) for the topological space $X=\Spec R$. Indeed, we define the hyperring of sections over an open subset $U\subseteq X$ as follows:
\[
\mathcal{O}_X(U):=\{s:U\to \bigsqcup_{\mathfrak{p} \in U}R_\mathfrak{p},\textrm{ such that $s$ is locally representable by some element $\frac{a}{f}$. } \}
\]

Then one has the following:

\begin{mytheorem}\cite{jun2015algebraic}\label{equivalence}
Let $R$ be a hyperdomain and $X=\Spec R$, then the hyperring of global sections $\Gamma(X,\mathcal{O}_X)$ is isomorphic to $R$. Furthermore, for each $x=\mathfrak{p} \in X$, the stalk $\mathcal{O}_{X,x}$ exists and is isomorphic to $R_\mathfrak{p}$.  
\end{mytheorem}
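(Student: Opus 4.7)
The plan is to replicate the classical proof of the analogous statement for commutative rings (namely $\Gamma(\Spec A, \mathcal{O}_{\Spec A}) \cong A$ and $\mathcal{O}_{\Spec A, \mathfrak{p}} \cong A_\mathfrak{p}$), being careful to verify that each step survives the passage from single-valued to multi-valued addition. The hyperdomain hypothesis on $R$ plays the same role that the integral domain hypothesis plays classically: it ensures that $(0)$ is a prime ideal (and so a generic point of $X$), that the localization maps $R \to R_\mathfrak{p}$ are injective, and that the fractions appearing in the definition of $\mathcal{O}_X$ behave like honest quotients.

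First I would establish the stalk computation $\mathcal{O}_{X,\mathfrak{p}} \cong R_\mathfrak{p}$. There is an evident evaluation map $\mathrm{ev}_\mathfrak{p}\colon \mathcal{O}_{X,\mathfrak{p}} \to R_\mathfrak{p}$ sending the germ of a local section $s$ to $s(\mathfrak{p})$. Since any section is locally represented by a fraction $a/f$ with $f \notin \mathfrak{p}$, the map is well-defined and respects both the (multi-valued) hyperaddition and the multiplication, because both are computed stalkwise inside $R_\mathfrak{p}$. Surjectivity is immediate: given $a/f \in R_\mathfrak{p}$, the section $a/f$ on $D(f)$ has germ mapping to $a/f$. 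Injectivity is the usual clearing-of-denominators argument: if $s(\mathfrak{p}) = 0$, shrink to a neighborhood on which $s = a/f$, deduce that there is $c \notin \mathfrak{p}$ with $ca = 0$, and conclude that $s$ vanishes on the smaller distinguished open $D(cf)$.

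Next I would introduce the canonical homomorphism $\Phi\colon R \to \Gamma(X,\mathcal{O}_X)$ sending $r$ to the global section $\mathfrak{p} \mapsto r/1$. For injectivity of $\Phi$, if $\Phi(r) = 0$ then, evaluating at the generic point $(0) \in X$ (prime because $R$ is a hyperdomain), we find $r/1 = 0$ in $R_{(0)}$; the hyperdomain hypothesis then forces $r=0$. For surjectivity, take a global section $s$ and cover $X$ by finitely many distinguished opens $D(f_i)$ on each of which $s$ is represented by $a_i/f_i$; the reduction to a finite cover is the standard quasi-compactness of $\Spec R$, while the statement that the $D(f_i)$ cover $X$ becomes the hyperring-theoretic condition $1 \in \sum_i g_i f_i$ for suitable $g_i \in R$ and a finite index set. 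On the overlap $D(f_i f_j) = D(f_i)\cap D(f_j)$, the identity $a_i/f_i = a_j/f_j$ in $R_{f_i f_j}$ produces, using absence of zero-divisors, an honest equation $f_j a_i = f_i a_j$ in $R$; combined with $1 \in \sum_i g_i f_i$ this is enough to single out an element $r \in R$ whose image under $\Phi$ agrees with $s$ on every $D(f_j)$.

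The main technical obstacle lies in the last step: classically one produces $r$ by the closed-form expression $r = \sum_i g_i a_i$, but in the hyperring setting the right-hand side is a subset of $R$ and one can only assert $r \in \sum_i g_i a_i$. It must then be verified that the element $r$ chosen from this set really does restrict, on each $D(f_j)$, to $a_j/f_j$. This is a set-theoretic rather than equational computation, and one exploits the reversibility axiom of canonical hypergroups to convert membership statements into the desired equality of germs in each $R_{f_j}$. Once this bookkeeping is carried out, the rest of the argument is a direct translation of the classical proof; and, as the text remarks, the theorem is a mild extension of results already established in \cite{jun2015algebraic}, so at the level of a paper one could equally well cite that work and spell out only the changes needed to weaken the standing hypotheses to the present hyperdomain setting.
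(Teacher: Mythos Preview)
The paper does not give a proof of this statement at all: the theorem is imported wholesale from \cite{jun2015algebraic}, as signalled by the citation in the theorem header, and is used purely as background. So there is nothing in the paper to compare your argument against; your closing remark that ``one could equally well cite that work'' is in fact exactly what the paper does, and the result is not a ``mild extension'' of \cite{jun2015algebraic} but is quoted from it verbatim.

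That said, your outline is the natural adaptation of the classical argument and is essentially how one would expect the cited proof to go. One comment on the surjectivity step: rather than choosing an arbitrary $r$ from the hypersum $\sum_i g_i a_i$ and then wrestling with reversibility to show it restricts correctly on each $D(f_j)$, it is cleaner to exploit the generic point $(0)$ more fully. All of the local representatives $a_i/f_i$ agree at $(0)$, giving a single element $\alpha \in \Frac(R)$; since $f_i\alpha = a_i \in R$ and $1 \in \sum_i g_i f_i$, distributivity yields $\alpha = \alpha\cdot 1 \in \sum_i g_i f_i \alpha = \sum_i g_i a_i \subseteq R$, so $\alpha$ already lies in $R$ and is automatically the correct global element. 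This sidesteps the bookkeeping you flag as the ``main technical obstacle''.
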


One can directly generalize the definition of locally ringed spaces to define locally hyperringed spaces as follows:

\begin{mydef}\label{locallyhyperringed}
\begin{enumerate}
\item 
A \emph{locally hyperringed space} $(X,\mathcal{O}_X)$ is a topological space $X$ together with a sheaf $\mathcal{O}_X$ of hyperrings (the structure sheaf of $X$) such that the stalk $\mathcal{O}_{X,x}$ exists for each $x \in X$ and contains a unique maximal ideal.
\item
Let $(X,\mathcal{O}_X)$ and $(Y,\mathcal{O}_Y)$ be locally hyperringed spaces. A \emph{morphism} 
\[
(f,f^\#):(X,\mathcal{O}_X) \to (Y,\mathcal{O}_Y)
\] 
of locally hyperringed spaces is a pair of a continuous map $f:X \to Y$ and a morphism $f^\#:\mathcal{O}_Y \to f_*\mathcal{O}_X$ of sheaves of hyperrings such that for each $x \in X$, the induced map $f^\#_x:\mathcal{O}_{Y,f(x)} \to \mathcal{O}_{X,x}$ is local. In other words, the inverse image of a unique maximal ideal of $\mathcal{O}_{X,x}$ is a unique maximal ideal of $\mathcal{O}_{Y,f(x)}$.  
\item 
An \emph{integral hyperring scheme} is a locally hyperringed space which is locally isomorphic to the spectrum of a hyperdomain. 
\end{enumerate}
\end{mydef}

\begin{rmk}
We remark that for a sheaf $\mathcal{F}$ of hyperrings on a topological space $X$, the stalk $\mathcal{F}_x$ at a point $x \in X$ may not exist. In Definition \ref{locallyhyperringed}, a locally hyperringed space $(X,\mathcal{O}_X)$ assumes the existence of the stalk $\mathcal{O}_{X,x}$ at each $x \in X$. 
\end{rmk}

\begin{pro}\label{cominclusion}
The inclusion functor $i$, from the category $\mathcal{C}$ of commutative rings to the category $\mathcal{D}$ of hyperrings, is fully faithful. 
\end{pro}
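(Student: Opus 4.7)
The plan is to unwind the definitions and exploit the fact that the hyperaddition on a classical commutative ring, viewed through $i$ as a hyperring, is single-valued.

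First I would fix two commutative rings $A$ and $B$ and observe that in $i(A)$ and $i(B)$ the hyperaddition is the singleton $a +_{H} b := \{a+b\}$, so every subset of the form $\phi(a) +_{H} \phi(b)$ in $i(B)$ is a singleton. Faithfulness of $i$ is then immediate: any hyperring homomorphism $i(f)\colon i(A)\to i(B)$ is by definition a set-map, so $i(f)=i(g)$ forces $f=g$ as ring homomorphisms.

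For fullness, I would start with an arbitrary hyperring homomorphism $\phi\colon i(A)\to i(B)$ and verify that $\phi$ is a ring homomorphism. Multiplicativity and the condition $\phi(1)=1$ are built into the definition of a hyperring homomorphism since $\phi$ is required to be a monoid homomorphism with respect to $\cdot$. For the additive part, the defining inclusion
\[
\phi(a +_{H} b) \subseteq \phi(a) +_{H} \phi(b)
\]
reads, after unpacking the singletons on both sides, as $\{\phi(a+b)\}\subseteq\{\phi(a)+\phi(b)\}$, which collapses to the equality $\phi(a+b)=\phi(a)+\phi(b)$. Together with $\phi(0)=0$, coming from the hypergroup-homomorphism requirement $\phi(e_1)=e_2$, this shows $\phi$ is an honest ring homomorphism, so $\phi = i(\phi_0)$ for $\phi_0\colon A\to B$ the underlying set-map regarded as a ring map.

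There is no real obstacle here: the content of the proposition is simply that the only place where hyperring homomorphisms can be more general than ring homomorphisms is in the multi-valuedness of hyperaddition, and this extra flexibility vanishes as soon as the source and target happen to be classical rings. The key point to emphasize in the writeup is that the inclusion $\phi(a+b)\subseteq \phi(a)+\phi(b)$ becomes an equality purely because both sides are singletons in $i(B)$; once that is noted the verification is mechanical.
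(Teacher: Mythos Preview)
Your proposal is correct and follows essentially the same approach as the paper: faithfulness is immediate since $i$ does not change the underlying set-map, and fullness follows because the inclusion $\phi(a+b)\subseteq \phi(a)+\phi(b)$ collapses to an equality when both sides are singletons. The paper's proof is slightly terser, but the content is identical.
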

\begin{proof}
Let $A$, $B$ be commutative rings. We have to show that $\Hom_\mathcal{C}(A,B)=\Hom_\mathcal{D}(i(A),i(B))$. Suppose that $i(f)=i(g)$ for $f,g \in \Hom_\mathcal{C}(A,B)$. In particular, this means that $f=g$ as functions and hence $f=g \in \Hom_\mathcal{C}(A,B)$. Also, for any $h\in  \Hom_\mathcal{D}(i(A),i(B))$, since $i(A)$ and $i(B)$ are commutative rings, the condition $h(a+b)\subseteq h(a)+h(b)$ for all $a,b \in i(A)$ simply means that $h(a+b)=h(a)+h(b)$ and hence $h \in \Hom_\mathcal{C}(A,B)$. 
\end{proof}

\begin{pro}\label{stalklemma}
Let $X$ be a scheme considered as an object in the category of locally hyperringed spaces. For each $x \in X$, the stalk $\mathcal{O}_{X,x}$ exists and is same as the stalk taken by considering $X$ in the category of locally ringed spaces. 
\end{pro}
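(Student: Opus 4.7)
My plan is to construct the stalk as a filtered colimit in the category of hyperrings and show that the construction collapses to the usual ring-theoretic filtered colimit, because every $\mathcal{O}_X(U)$ has single-valued addition. First I would take the set
\[
S := \bigsqcup_{U \ni x}\mathcal{O}_X(U)\Big/{\sim},
\]
where $(U,s)\sim(V,t)$ precisely when there is an open neighborhood $W$ of $x$ contained in $U\cap V$ with $s|_W = t|_W$. This is identical to the underlying set of the ring-theoretic stalk. Writing $[s]_U$ for the class of $(U,s)$, I equip $S$ with multiplication $[s]_U \cdot [t]_V := [s|_{U\cap V}\cdot t|_{U\cap V}]_{U\cap V}$ and hyperaddition $[s]_U + [t]_V := \{[u]_{U\cap V} \mid u \in s|_{U\cap V} + t|_{U\cap V}\}$. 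Because $\mathcal{O}_X(U\cap V)$ is a commutative ring, hence has single-valued addition under the embedding $i$ of Proposition \ref{cominclusion}, the set on the right is a singleton. Thus $(S,+,\cdot)$ is a hyperring whose hyperaddition is single-valued, i.e.\ precisely the classical ring-theoretic stalk $\mathcal{O}_{X,x}$ viewed as a hyperring.

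Next I would verify the universal property in the category of hyperrings. Given a hyperring $R$ together with a compatible family of hyperring homomorphisms $\varphi_U:\mathcal{O}_X(U)\to R$, the set-level universal property of the directed system supplies a unique function $\varphi:S\to R$ through which all $\varphi_U$ factor. Multiplicativity of $\varphi$ is immediate from that of each $\varphi_U$. For hyperadditivity, choose representatives $s,t$ over a common neighborhood $U$ of $x$; then
\[
\varphi\bigl([s]_U+[t]_U\bigr) \;=\; \varphi_U(s+t) \;\in\; \varphi_U(s)+\varphi_U(t) \;=\; \varphi([s]_U)+\varphi([t]_U),
\]
using that $\varphi_U$ is a hyperring homomorphism. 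This establishes both existence of the stalk in the category of hyperrings and its equality, as a hyperring, with the classical stalk.

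Finally, I would confirm that this stalk has a unique maximal ideal in the hyperring sense, so that $X$ is genuinely a locally hyperringed space in the sense of Definition \ref{locallyhyperringed}. Since hyperaddition on $\mathcal{O}_{X,x}$ is single-valued, the notion of ideal given in Definition \ref{primeideal} coincides with the classical notion of ring ideal on this object; hence the unique maximal ideal of the local ring $\mathcal{O}_{X,x}$ doubles as the unique maximal ideal in the hyperring sense. The only delicate point, and the step where I would expect readers to pause, is the verification that the hyperaddition on $S$ is well-defined on equivalence classes and single-valued. This rests entirely on the restriction maps being ordinary ring homomorphisms between rings, so the argument is elementary but deserves to be written out carefully.
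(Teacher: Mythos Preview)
Your argument is correct. The paper's own proof is much terser: it reduces to the affine case $X=\Spec A$ and simply asserts that $A_{\mathfrak p}$ satisfies the universal property of the stalk in the category of hyperrings, leaving the verification to the reader. Your route is different in execution: you bypass the reduction to affines and build the germ colimit directly, then make explicit the reason this particular filtered colimit exists in the category of hyperrings even though such colimits do not exist in general---namely, every object in the diagram is an honest commutative ring, so the hyperaddition on germs is forced to be single-valued. This buys you a uniform argument that does not depend on choosing an affine neighborhood or knowing in advance that the answer is $A_{\mathfrak p}$, and it also handles the ``unique maximal ideal'' clause of Definition~\ref{locallyhyperringed} in the same stroke. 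The paper's approach is shorter on the page but outsources the actual content; yours is self-contained and could replace the paper's proof with no loss.
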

\begin{proof}
We may assume that $X$ is affine, i.e., $X=\Spec A$ for some commutative ring $A$. Let $x=\mathfrak{p} \in X$. One can easily check that in this case $A_\mathfrak{p}$ satisfies the universal property of the stalk of $\mathcal{O}_X$ at $x$.  
\end{proof}

\begin{pro}\label{inclusionf}
The inclusion functor $i$, from the category $\mathfrak{Schm}$ of schemes to the category $\mathfrak{Lhs}$ of locally hyperringed spaces, is fully faithful. 
\end{pro}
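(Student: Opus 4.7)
The plan is to unpack the definition of a morphism in each category and to read off the conclusion from Propositions \ref{cominclusion} and \ref{stalklemma}. A morphism in either $\mathfrak{Schm}$ or $\mathfrak{Lhs}$ is a pair $(f,f^\#)$ consisting of a continuous map $f:X\to Y$ together with a morphism $f^\#:\mathcal{O}_Y\to f_*\mathcal{O}_X$ of sheaves (of commutative rings in one category, of hyperrings in the other), subject to a locality condition at each stalk. The underlying topological data $f$ is identical in the two categories, so it suffices to compare the sheaf-morphism data and the locality conditions.

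For the sheaf morphism data, I would argue that a morphism $f^\#$ of sheaves of hyperrings amounts to a collection of hyperring homomorphisms $f^\#_U:\mathcal{O}_Y(U)\to (f_*\mathcal{O}_X)(U)$ compatible with restriction maps. Because $X$ and $Y$ are schemes, each $\mathcal{O}_Y(U)$ and $(f_*\mathcal{O}_X)(U)$ is a genuine commutative ring viewed in $\mathfrak{Lhs}$ through the inclusion functor $i$ of Proposition \ref{cominclusion}. Since that inclusion is fully faithful, each $f^\#_U$ is automatically a ring homomorphism, and conversely any ring homomorphism between them is a hyperring homomorphism. The restriction-compatibility axiom is a condition of the same form in both categories, so the sets of sheaf-of-ring morphisms and sheaf-of-hyperring morphisms $\mathcal{O}_Y\to f_*\mathcal{O}_X$ are in natural bijection.

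For the locality conditions, I would invoke Proposition \ref{stalklemma}: for each $x\in X$ the stalk $\mathcal{O}_{X,x}$ computed in $\mathfrak{Lhs}$ coincides with the classical scheme-theoretic stalk, which is a commutative local ring. Its unique maximal ideal is the same whether viewed as an ideal of a ring or as an ideal of a hyperring, since by Proposition \ref{cominclusion} the ideal structure of a commutative ring is unchanged under $i$, and moreover $\Spec A=\Hom(A,\mathbb{K})$ holds whether one computes the hom set in $\mathcal{C}$ or in $\mathcal{D}$. For local homomorphisms between commutative local rings the scheme-theoretic locality condition and the condition in Definition \ref{locallyhyperringed}(2), namely that the preimage of the unique maximal ideal is the unique maximal ideal, are equivalent. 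Hence the bijection on sheaf-morphism data restricts to a bijection on morphisms satisfying the locality requirement, giving $\Hom_{\mathfrak{Schm}}(X,Y)\cong \Hom_{\mathfrak{Lhs}}(i(X),i(Y))$.

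I do not anticipate a genuine obstacle here: the argument is essentially bookkeeping that invokes the two preceding propositions. The only mildly delicate point is confirming that the hyperring-theoretic notion of maximal ideal, as given in Definition \ref{primeideal}, reduces to the usual one when the ambient hyperring happens to be a commutative ring; this follows directly from unwinding the definitions together with Proposition \ref{cominclusion}.
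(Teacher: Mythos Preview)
Your proposal is correct and follows essentially the same approach as the paper: both reduce the question to Proposition~\ref{cominclusion} applied sectionwise, together with the identification of stalks. Your write-up is in fact more detailed than the paper's, which simply observes the inclusion $\Hom_\mathfrak{Schm}(X,Y)\subseteq\Hom_\mathfrak{Lhs}(i(X),i(Y))$ and then says the reverse inclusion follows ``similar to Proposition~\ref{cominclusion}''; your explicit invocation of Proposition~\ref{stalklemma} and your discussion of why the locality conditions coincide fill in steps the paper leaves implicit.
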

\begin{proof}
Let $X$ and $Y$ be schemes. We have to show that $\Hom_\mathfrak{Schm}(X,Y)=\Hom_\mathfrak{Lhs}(i(X),i(Y))$. Since any morphism of locally ringed spaces is indeed a morphism of locally hyperringed spaces, we have that 
\[
\Hom_\mathfrak{Schm}(X,Y)\subseteq\Hom_\mathfrak{Lhs}(i(X),i(Y))
\]
Let $(h,h^\#):i(X) \to i(Y)$ be a morphism of locally hyperringed spaces. Similar to Proposition \ref{cominclusion}, one can easily see that $(h,h^\#)$ is indeed a morphism of locally ringed spaces. This completes the proof. 
\end{proof}


Note that Theorem \ref{equivalence} can not be generalized to the category of hyperrings. For instance, the following example shows that one does not have an equivalence between the opposite category of hyperrings and the category of affine hyperring schemes. 

\begin{myeg}\cite[Example 4.24]{jun2015algebraic}
Let $R$ be the quotient hyperring $(\mathbb{Q}\oplus \mathbb{Q})/G$, where $G$ is the subgroup of ($\mathbb{Q}\oplus \mathbb{Q})^\times$ consisting of $(1,1)$ and $(-1,-1)$. Let $X=\Spec R$. Then we have
\[
\Gamma(X,\mathcal{O}_X)=(\mathbb{Q}/N) \oplus (\mathbb{Q}/N), 
\] 
where $N=\{1,-1\}$ is a subgroup of $\mathbb{Q}^\times$. One can easily check that in this case $R$ is not isomorphic to $\Gamma(X,\mathcal{O}_X)$.  
\end{myeg}

At the moment, we only have theory of integral hyperring schemes which only generalize integral schemes. Nonetheless, we have the following theorem which will be used in the sequel. For the general terminology for hyperring schemes which generalizes the classical concepts, we refer the readers to \cite{jun2015algebraic} or \cite[\S 4]{jaiungthesis}.  

\begin{mythm}\label{mainlemma}
Let $H$ be a hyperfield and $k$ be a field. Suppose that we have a fixed homomorphism $s:k \to H$ of hyperfields. Let $X$ be a scheme over a field $k$. Then to give a morphism $f:\Spec H \to X$ over $k$ of locally hyperringed spaces is equivalent to give a point $x \in X$ and a homomorphism $\tilde{s}:k(x) \to H$ of hyperrings such that $s=\tilde{s}\circ \rho$, where $k(x)$ is the residue field at $x$ and $\rho:k \to k(x)$ is the canonical homomorphism. 
\end{mythm}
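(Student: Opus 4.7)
The plan is to adapt the classical bijection (Hartshorne II, Ex.~2.4) between morphisms $\Spec K \to X$ and pairs $(x,\tilde{s}\colon k(x)\to K)$ to the hyperring setting, using the results of the preceding subsection. The starting observation is that since a hyperfield $H$ has $\{0\}$ as its only proper ideal, $\Spec H$ is a one-point space whose unique stalk equals $H$ itself (via Theorem \ref{equivalence}, since $H$ is in particular a hyperdomain); the maximal ideal of $H$ is $\{0\}$. I will construct maps in both directions and show they are mutually inverse.

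\emph{Forward direction.} Given $f=(f,f^{\#})\colon \Spec H \to X$ over $k$, let $\ast$ be the unique point of $\Spec H$ and set $x:=f(\ast)\in X$. By Proposition \ref{stalklemma} the stalk $\mathcal{O}_{X,x}$ computed in $\mathfrak{Lhs}$ agrees with the usual scheme-theoretic stalk, so $f^{\#}$ induces a hyperring homomorphism $f^{\#}_{x}\colon\mathcal{O}_{X,x}\to H$. By the definition of a morphism of locally hyperringed spaces (Definition \ref{locallyhyperringed}), $f^{\#}_{x}$ is local, i.e.\ it sends the maximal ideal $\mathfrak{m}_{x}\subseteq\mathcal{O}_{X,x}$ into $\{0\}\subseteq H$. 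Hence $f^{\#}_{x}$ descends to a hyperring homomorphism $\tilde{s}\colon k(x)=\mathcal{O}_{X,x}/\mathfrak{m}_{x}\to H$. The requirement that $f$ be a morphism over $k$ forces the composition $k\xrightarrow{\rho}k(x)\xrightarrow{\tilde{s}}H$ to equal $s$.

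\emph{Reverse direction.} Given $(x,\tilde{s})$ with $\tilde{s}\circ\rho=s$, choose an affine open neighborhood $U=\Spec A$ of $x$ and let $\mathfrak{p}\subseteq A$ be the prime corresponding to $x$. Using Proposition \ref{cominclusion}, the ring-theoretic localization composite
\[
\varphi\colon A\longrightarrow A_{\mathfrak{p}}\longrightarrow k(\mathfrak{p})=k(x)\xrightarrow{\ \tilde{s}\ } H
\]
is a hyperring homomorphism. Since $\Spec H$ is a single point, the underlying continuous map $g\colon\Spec H\to U$ is forced to be constant with value $\mathfrak{p}$. The sheaf map $g^{\#}$ is determined on basic opens: for $a\in A$, either $a\in\mathfrak{p}$ and $g^{-1}(D(a))=\emptyset$, or $a\notin\mathfrak{p}$ and $g^{-1}(D(a))=\Spec H$, in which case $\varphi(a)\in H^{\times}$ and $\varphi$ extends uniquely to $A[a^{-1}]\to H$. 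Composing with the open immersion $U\hookrightarrow X$ produces a morphism $\Spec H\to X$ of locally hyperringed spaces. Independence of the chosen $U$ follows from the standard argument on intersections of two affine opens, since both constructions agree with the local data near $x$.

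\emph{Mutual inverses and main obstacle.} One then checks that the two constructions are inverse to each other: starting from $f$, passing to stalks and restricting to an affine neighborhood recovers the global homomorphism by Theorem \ref{equivalence} (applied to $A$ viewed as a hyperring), and starting from $(x,\tilde{s})$, the associated morphism clearly sends $\ast$ to $x$ and yields $\tilde{s}$ on the residue field. The main technical point I expect to require some care is showing that a morphism $\Spec H\to\Spec A$ of locally hyperringed spaces is completely determined by the induced global hyperring homomorphism $A\to H$; this is where one genuinely uses that $\Spec H$ has a unique point, that its structure sheaf has global sections $H$, and that locality of the stalk map is equivalent to having a well-defined factorization through the residue field. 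Everything else is a routine translation of the classical scheme-theoretic argument, made legitimate by the fully faithful embedding $\mathfrak{Schm}\hookrightarrow\mathfrak{Lhs}$ of Proposition \ref{inclusionf}.
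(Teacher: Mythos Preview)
Your proposal is correct and follows the same conceptual arc as the paper's proof; the forward direction is essentially identical (pass to stalks, use locality to factor through $k(x)$). The one genuine difference is in the reverse construction: the paper does \emph{not} pick an affine neighbourhood of $x$, but instead defines $f^{\#}$ globally on $X$ by declaring, for every open $U\subseteq X$,
\[
f^{\#}(U)\;=\;\begin{cases}\tilde{s}\circ\pi\circ\pi_{U,x}\colon \mathcal{O}_X(U)\to\mathcal{O}_{X,x}\to k(x)\to H & \text{if }x\in U,\\ 0 & \text{if }x\notin U,\end{cases}
\]
and then checks compatibility with restriction and locality of the stalk map (the latter by reducing to an affine open around $x$ anyway). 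Your route---build $\Spec H\to \Spec A$ from the composite $A\to k(\mathfrak{p})\to H$, then push forward along $U\hookrightarrow X$---is the classical Hartshorne manoeuvre; it trades the paper's restriction-compatibility diagram chase for an independence-of-chart verification, which you correctly flag. Both are routine, and the ``main technical point'' you isolate (that a morphism $\Spec H\to\Spec A$ in $\mathfrak{Lhs}$ is determined by its effect on global sections) is exactly what the paper later records separately as Proposition~\ref{equivaelnceofcategories}.
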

\begin{proof}
The proof is similar to the classical proof, however, we include the proof for the sake of completeness. Let $\Spec H=\{y\}$. First, suppose that $(f,f^\#):\Spec H \to X$ sending $y$ to $x$ is a morphism of locally hyperringed spaces. By taking stalks, we have 
\[
f^\#_x:\mathcal{O}_{X,x} \to H. 
\]
Note that even though the stalk $\mathcal{O}_{X,x}$ is taken in the category of locally hyperringed spaces, it is same as being taken in the category of locally ringed spaces thanks to Proposition \ref{stalklemma}. Now, since $f_x^\#$ is local, we have $(f_x^\#)^{-1}(y)=(f_x^\#)^{-1}(\{0\})=\mathfrak{m}_x$, where $\mathfrak{m}_x$ is a unique maximal ideal of $\mathcal{O}_{X,x}$. In other words, $\mathfrak{m}_x = \ker f_x^\#$ and hence $f^\#_x$ induces the following homomorphism of hyperrings:
\[
\varphi_x:\mathcal{O}_{X,x}/\mathfrak{m}_x=k(x) \to H, \quad [a] \mapsto f_x^\#(a),
\]
where $[a]$ is the equivalence class of $a \in \mathcal{O}_{X,x}$ in $k(x)$. This shows that $(f,f^\#)$ gives a point $x \in X$ and a homomorphism of hyperrings $\varphi_x:\mathcal{O}_{x,x} \to H$. This is clearly compatible with $s:k \to H$ and $\rho:k\to k(x)$.

Conversely, suppose that $x \in X$ and $\varphi_x:k(x) \to H$ such that $\varphi_x\circ \rho=s$ are given. We define $f:\Spec H=\{y\} \to X$ sending $y$ to $x$. Then clearly $f$ is continuous. Next, we define a morphism $f^\#:\mathcal{O}_X \to f_*\mathcal{O}_{\Spec H}$ of sheaves of hyperrings. Notice that
\[
\mathcal{O}_{\Spec H}(f^{-1}(U)) =\left\{ \begin{array}{ll}
H & \textrm{if $x \in U$}\\
0& \textrm{if $x \not \in U$},
\end{array} \right.
\]
Hence, for each open subset $U$ of $X$, we define the following:
\[
f^\#(U):=\left\{ \begin{array}{ll}
\varphi_x\circ \pi \circ \pi_{U,x} & \textrm{if $x \in U$}\\
0& \textrm{if $x \not \in U$},
\end{array} \right.
\]
where $\varphi_x$ is given, $\pi:\mathcal{O}_{X,x} \to k(x)$ is a canonical projection, and $\pi_{U,x}$ is a canonical homomorphism from $\mathcal{O}_X(U)$ to $\mathcal{O}_{X,x}$. Then $f^\#$ is a morphism of sheaves of hyperrings. Indeed, clearly $f^\#(U)$ is a homomorphism of hyperrings for each open subset $U$ of $X$ and hence we only have to check the compatibility condition. Suppose that $V \subseteq U \subseteq X$. There are three cases; the first case is when $x \not \in U$. In this case, both $f^\#(U)$ and $f^\#(V)$ are zero and hence there is nothing to prove. The second  case is when $x \in U \cap V^c$. Since $\mathcal{O}_{\Spec H}(f^{-1}(V))=0$ and $f^\#(V)=0$, in this case the compatibility is clear. The only nontrivial case is when $x \in V$. In this case, we have $\mathcal{O}_{\Spec H}(f^{-1}(U))=\mathcal{O}_{\Spec H}(f^{-1}(V))=H$ and the restriction map $\pi_{f^{-1}(U),f^{-1}(V)}:\mathcal{O}_{\Spec H}(f^{-1}(U)) \to \mathcal{O}_{\Spec H}(f^{-1}(V))$ is just an identity map. We first claim that the following diagram commutes. 
\begin{equation}
\begin{gathered} 
\xymatrix{\ar @{} [dr]
\mathcal{O}_X(U)  \ar[r]^{\pi_{U,V}} \ar[d]_{f^\#(U)} & \mathcal{O}_X(V) \ar[d]^{f^\#(V)}   \\
 H \ar[r]^{id} & H  }
\end{gathered} 
\end{equation}
In fact, we have $f^\#(U)=\varphi_x\circ \pi \circ \pi_{U,x}$. But, since $x \in V \subseteq U$, we have $\pi_{U,x}=\pi_{V,x}\circ \pi_{U,V}$ and hence $f^\#(U)=\varphi_x\circ \pi\circ \pi_{V,x}\circ \pi_{U,V}=f^\#(V)\circ \pi_{U,V}$. This proves that the diagram commutes. It only remains to show that $f^\#_{x}:\mathcal{O}_{X,x} \to \mathcal{O}_{\Spec H,y}$ is a local homomorphism of local hyperrings, i.e., $(f^\#_x)^{-1}(y)=\mathfrak{m}_x$. For this, we may assume that $X$ is affine. Let $X=\Spec A$ and $x$ be a prime ideal $\mathfrak{p}$. Since $\mathcal{O}_{\Spec H,y}=\Frac(H)=H$ (thanks to Theorem \ref{equivalence}), by taking global sections and stalks, we have the following commutative diagram:
\begin{equation}\label{equation}
\begin{gathered} 
\xymatrix{\ar @{} [dr]
A  \ar[r]^{\pi_{X,x}} \ar[d]_{f^\#(X)} & A_\mathfrak{p} \ar[d]^{f^\#_y}   \\
 H \ar[r]^{id} & H  }
 \end{gathered} 
\end{equation}
Notice that $y=\{0_H\} \subseteq H$ and $(f^\#(X))^{-1}(y)=\mathfrak{p}$ (from the assumption). Furthermore, we have $\pi_{X,x}(\mathfrak{p})=\mathfrak{m}_x$. It follows from the commutative diagram \eqref{equation} that $(f^\#_y)^{-1}(y)=\mathfrak{m}_x$.

One can easily check that the above two constructions are inverse to each other and hence induces the desired one-to-one correspondence. 
\end{proof}

\begin{rmk}
In Theorem \ref{mainlemma}, we considered a scheme over a field $k$, however, one can easily observe that the same argument is still valid when one replaces $k$ with $\mathbb{Z}$, the ring of integers. 
\end{rmk}

\begin{rmk}
One may notice that the above theorem is a generalization of a classical result; when $H$ is a field, then the only structure morphism $\varphi:k \to H$ is an inclusion (or a canonical map when $k$ is $\mathbb{Z}$) and to give a morphism of $\Spec H$ to $X$ is equivalent to give a point $x \in X$ and an inclusion map $k(x) \to H$ which is compatible with the given structure morphism. 
\end{rmk}

When we specialize a hyperfield $H$ to be the Krasner hyperfield $\mathbb{K}$, the tropical hyperfield $\mathbb{T}$, and the hyperfield of signs $\mathbb{S}$ , we have the following. 
 
\begin{cor}\label{krasner}
Let $\mathbb{K}$ be the Krasner hyperfield. Suppose that $X$ is a scheme over $\mathbb{Z}$. To give a morphism $f:\Spec \mathbb{K}=\{y\} \to X$ is the same thing as to give a point $f(y)=x \in X$ and a homomorphism $k(x) \to \mathbb{K}$. 
\end{cor}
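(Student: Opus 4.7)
The plan is to specialize Theorem \ref{mainlemma} with $H = \mathbb{K}$ and invoke the remark immediately preceding the corollary, which asserts that the proof of Theorem \ref{mainlemma} goes through verbatim when the base field $k$ is replaced by $\mathbb{Z}$. The first step is to identify the unique structure homomorphism $s:\mathbb{Z} \to \mathbb{K}$: as recorded in the example earlier in this section, for any hyperring $R$ there is a unique hyperring homomorphism $R \to \mathbb{K}$, sending $0$ to $0$ and every nonzero element to $1$. Taking $R = \mathbb{Z}$ gives a canonical map $s$ that plays the role of the fixed structure morphism appearing in the hypothesis of Theorem \ref{mainlemma}.

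Next, I would verify that the compatibility condition $s = \tilde{s}\circ \rho$ in Theorem \ref{mainlemma} is automatic in this setting. For any point $x \in X$ with residue field $k(x)$, the canonical map $\rho:\mathbb{Z} \to k(x)$ composed with any hyperring homomorphism $\tilde{s}:k(x) \to \mathbb{K}$ yields a hyperring homomorphism $\mathbb{Z} \to \mathbb{K}$; by the uniqueness noted above this composite must coincide with $s$. Hence the compatibility condition carries no content, and the bijection from Theorem \ref{mainlemma} specializes to: morphisms $\Spec \mathbb{K} \to X$ of locally hyperringed spaces correspond bijectively to pairs $(x,\tilde{s})$ with $x \in X$ and $\tilde{s}:k(x) \to \mathbb{K}$ a hyperring homomorphism.

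The only potential obstacle is confirming that the proof of Theorem \ref{mainlemma} indeed carries over to $\mathbb{Z}$ in place of a field $k$. Inspecting that proof, the arguments used are: the description of the stalks $\mathcal{O}_{X,x}$ via Proposition \ref{stalklemma} (which holds for any scheme, regardless of base), the locality of $f_x^\#$ producing the factorization through $k(x)$, and the construction of $f^\#$ from a given $\varphi_x$ using the projections $\pi$ and $\pi_{U,x}$; none of these steps use anything particular to $k$ being a field other than the existence of a structure morphism, and over $\mathbb{Z}$ such a morphism is automatic since every scheme is a $\mathbb{Z}$-scheme. Combining these observations directly delivers the stated bijection, which is the content of the corollary.
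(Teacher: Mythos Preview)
Your proposal is correct and follows exactly the approach the paper intends: the paper states this corollary without proof, treating it as an immediate specialization of Theorem \ref{mainlemma} together with the remark that the argument works over $\mathbb{Z}$. Your additional care in verifying that the compatibility condition $s = \tilde{s}\circ\rho$ is automatic (via the uniqueness of homomorphisms into $\mathbb{K}$) is a welcome explicitation of what the paper leaves implicit.
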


\begin{rmk}\label{krasnerrmk}
For any field $K$, since $\Spec K$ is a one point set and $\Spec K =\Hom(K,\mathbb{K})$, there is only one homomorphism from $K$ to $\mathbb{K}$. In particular, there exists only one homomorphism from $k(x)$ to $\mathbb{K}$ at each $x \in X$. Therefore, a morphism $f:\Spec \mathbb{K}=\{y\} \to X$ is uniquely determined by a point $f(y )=x \in X$. In other words, the underlying set $|X|$ of a scheme $X$ is in a one-to-one correspondence with the set $\Hom_{\mathfrak{Lhs}}(\Spec \mathbb{K},X)$. This perspective will be investigated in \S \ref{schemetheory}. 
\end{rmk}

\begin{cor}\label{analytification}
Let $\mathbb{T}$ be the tropical hyperfield. Let $k$ be a non-Archimedean valued field with a valuation $\nu:k\to \mathbb{T}$ and $X$ be a scheme over $k$. Then to give a morphism $f:\Spec \mathbb{T}=\{y\} \to X$ is the same thing as to give a point $f(y)=x \in X$ and an extension of $\nu$ from $k$ to $k(x)$. 
\end{cor}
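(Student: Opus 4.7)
The plan is to apply Theorem \ref{mainlemma} directly with $H=\mathbb{T}$ and structure homomorphism $s=\nu$. For this to make sense, I first need to recognize $\nu:k\to\mathbb{T}$ as a homomorphism of hyperfields, which in turn reduces to the following general equivalence (already alluded to in the paper via Baker and Bowler's observation): for any field $K$, giving a non-Archimedean valuation $v:K\to\mathbb{T}$ in the additive convention is the same as giving a hyperring homomorphism $K\to\mathbb{T}$, where $K$ is viewed as a hyperring via the inclusion functor of Proposition \ref{cominclusion}.

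Once this equivalence is in hand, Theorem \ref{mainlemma} immediately gives the bijection between morphisms $f:\Spec\mathbb{T}\to X$ over $k$ and pairs $(x,\tilde{\nu})$ with $x\in X$ and $\tilde{\nu}:k(x)\to\mathbb{T}$ a hyperring homomorphism satisfying $\tilde{\nu}\circ\rho=\nu$, where $\rho:k\to k(x)$ is the canonical map. Translating the right-hand side via the valuation/homomorphism equivalence, the data $(x,\tilde{\nu})$ is exactly the data of a point $x\in X$ together with an extension of $\nu$ to a non-Archimedean valuation $\tilde{\nu}:k(x)\to\mathbb{T}$.

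The only substantive check is the valuation/homomorphism equivalence itself. Multiplicativity, the image of $1$, and the image of $0$ are automatic because $\odot$ is ordinary real addition with $-\infty$ absorbing. The content lies in the additive condition $\tilde{\nu}(a+b)\in\tilde{\nu}(a)\oplus\tilde{\nu}(b)$. Unpacking the definition of $\oplus$ on $\mathbb{T}$, this reads: if $\tilde{\nu}(a)\neq\tilde{\nu}(b)$ then $\tilde{\nu}(a+b)=\max\{\tilde{\nu}(a),\tilde{\nu}(b)\}$, and if $\tilde{\nu}(a)=\tilde{\nu}(b)$ then $\tilde{\nu}(a+b)\le\tilde{\nu}(a)$. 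These two clauses together are precisely the ultrametric inequality $\tilde{\nu}(a+b)\le\max\{\tilde{\nu}(a),\tilde{\nu}(b)\}$, using the standard fact that a non-Archimedean valuation automatically attains equality when the two values differ. Thus the two notions carry exactly the same information, and the converse direction is identical.

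I do not anticipate a genuine obstacle: the corollary is essentially a repackaging of Theorem \ref{mainlemma} combined with the dictionary between valuations and hyperring homomorphisms into $\mathbb{T}$. The only care needed is in bookkeeping the additive (tropical) convention for valuations and matching it against the hyperaddition on $\mathbb{T}$; this is exactly the standard pattern noted by Baker--Bowler and Viro.
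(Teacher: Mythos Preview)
Your proposal is correct and follows exactly the paper's intended approach: the corollary is stated without a separate proof because it is an immediate specialization of Theorem~\ref{mainlemma} once one knows that a (non-Archimedean) valuation on a field is the same datum as a hyperring homomorphism to $\mathbb{T}$. The paper in fact records precisely this equivalence later as Lemma~\ref{valuationmorphism}, so your unpacking of the ultrametric inequality versus the hyperaddition $\oplus$ is just an explicit version of that lemma.
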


\begin{rmk}
This perspective will be studied in \S \ref{berkovich} in connection to the Berkovich analytification $X^{\textrm{an}}$ of an algebraic variety $X$ over a complete non-Archimedean valued field $k$.
\end{rmk}

\begin{cor}
Let $\mathbb{S}$ be the hyperfield of signs. Let $k$ be a field together with a homomorphism $s:k \to \mathbb{S}$ and $X$ be a scheme over $k$. Then to give a morphism $f:\Spec \mathbb{S}=\{y\} \to X$ is the same thing as to give a point $f(y)=x \in X$ and a homomorphism from $k(x)$ to $\mathbb{S}$ whose restriction to $k$ is $s$, that is, an ordering of $k(x)$ which extends that of $k$. 
\end{cor}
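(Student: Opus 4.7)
The plan is to deduce this corollary as an immediate specialization of Theorem \ref{mainlemma} to the case $H = \mathbb{S}$. By that theorem, giving a morphism $f:\Spec \mathbb{S} \to X$ of locally hyperringed spaces over $k$ is equivalent to giving a point $x \in X$ together with a hyperring homomorphism $\tilde{s}:k(x) \to \mathbb{S}$ satisfying $s = \tilde{s}\circ \rho$, where $\rho:k \to k(x)$ is the canonical map. Thus the only content left to verify is the classical dictionary: a hyperfield homomorphism from a field $F$ to $\mathbb{S}$ is exactly the same data as an ordering (positive cone) on $F$, and the compatibility condition $\tilde{s}\circ \rho = s$ says precisely that the ordering of $k(x)$ extends that of $k$.

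For the dictionary, given a hyperring homomorphism $\varphi:F \to \mathbb{S}$ I would set $P := \varphi^{-1}(\{0,1\})$ and verify the axioms of a positive cone. Multiplicativity of $\varphi$ and the rule of signs in $\mathbb{S}$ give $P\cdot P \subseteq P$ and $P \cap (-P)=\{0\}$; since $\varphi$ is a homomorphism of fields into the multiplicative monoid $\mathbb{S}$, $\varphi$ is determined on $F^\times$ by its image in $\{\pm 1\}$, yielding $P \cup (-P) = F$. The inclusion $P + P \subseteq P$ follows from $\varphi(a+b) \subseteq \varphi(a)+\varphi(b)$ in $\mathbb{S}$: when $\varphi(a),\varphi(b) \in \{0,1\}$ the hypersum $\varphi(a)+\varphi(b)$ is contained in $\{0,1\}$ (since $1+1=1$, $1+0=1$, $0+0=0$), forcing $\varphi(a+b) \in \{0,1\}$. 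Conversely, given an ordering $P \subseteq F$, I would define $\varphi_P:F \to \mathbb{S}$ by $\varphi_P(0)=0$, $\varphi_P(a)=1$ for $a \in P\setminus\{0\}$, and $\varphi_P(a)=-1$ otherwise; multiplicativity is the usual rule of signs, and the hyperaddition condition encodes exactly the fact that the sign of a sum is determined by the signs of the summands unless those signs are opposite, in which case the sum can have any sign.

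Finally, the compatibility $\tilde{s}=\tilde{s}\circ\rho=s$ under this correspondence is precisely the statement that the positive cone $\tilde{s}^{-1}(\{0,1\}) \subseteq k(x)$ pulls back along $\rho$ to the positive cone $s^{-1}(\{0,1\}) \subseteq k$, i.e., that the ordering of $k(x)$ restricts to the prescribed ordering of $k$. The hardest part is essentially bookkeeping in the verification of the dictionary; once it is in place, the corollary is a direct restatement of Theorem \ref{mainlemma}.
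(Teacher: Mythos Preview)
Your approach is correct and is essentially the paper's own: the corollary is stated without proof as an immediate specialization of Theorem~\ref{mainlemma} to $H=\mathbb{S}$, and the dictionary between homomorphisms to $\mathbb{S}$ and orderings is established later as Lemma~\ref{orderinglemma}. Your proposal simply inlines that lemma (in the easier field case) into the proof of the corollary; apart from the typo ``$\tilde{s}=\tilde{s}\circ\rho=s$'' (which should read $\tilde{s}\circ\rho=s$), there is nothing to correct.
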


\begin{rmk}
This perspective will be investigated in \S \ref{spaceoforderings} in connection to the real scheme $X_r$ associated to a scheme $X$. 
\end{rmk}

We also have the following proposition which shows that our notion of locally hyperringed spaces is an appropriate generalization from affine schemes to non-affine schemes. In what follows, by $\mathfrak{Lhs}$, we always mean the category of locally hyperringed spaces. 

\begin{pro}\label{equivaelnceofcategories}
Let $H$ be a hyperfield and $X=\Spec A$ be an affine scheme. Then, we have the following identification of sets:
\[
\Hom(A,H)=\Hom_{\mathfrak{Lhs}}(\Spec H, X).
\]  
\end{pro}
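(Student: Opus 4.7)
The strategy is to invoke Theorem \ref{mainlemma} (with the base ring $\mathbb{Z}$, as permitted by the remark following it) in order to reduce the problem to a purely algebraic bijection. Since $H$ is a hyperfield, $\Spec H=\{y\}$ with $y=\{0_H\}$, and the structure morphism $\mathbb{Z}\to H$ is unique; hence the compatibility condition in Theorem \ref{mainlemma} is automatic. Thus the theorem identifies $\Hom_{\mathfrak{Lhs}}(\Spec H,X)$ with the set of pairs $(x,\psi_x)$ consisting of a point $x\in X=\Spec A$ and a hyperring homomorphism $\psi_x\colon k(x)\to H$. The task then becomes to exhibit a natural bijection between $\Hom(A,H)$ and such pairs.

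For the forward direction, given $\varphi\in \Hom(A,H)$, I would compose with the canonical surjection $\pi\colon H\to\mathbb{K}$ (sending $0_H\mapsto 0$ and every nonzero element to $1$) to obtain a hyperring homomorphism $\pi\circ\varphi\colon A\to\mathbb{K}$. By Definition \ref{primeideal} its kernel $\mathfrak{p}:=\ker(\pi\circ\varphi)=\varphi^{-1}(0_H)$ is a prime ideal of $A$; this will be the desired point $x$. Since $A$ is an honest commutative ring, $\varphi$ maps the multiplicative set $A\setminus\mathfrak{p}$ into $H^{\times}=H\setminus\{0_H\}$, so the classical universal property of localization yields a unique extension $\varphi_{\mathfrak{p}}\colon A_{\mathfrak{p}}\to H$. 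The maximal ideal $\mathfrak{p}A_{\mathfrak{p}}$ lies in the kernel of $\varphi_{\mathfrak{p}}$, so the induced map descends to a homomorphism $\psi_x\colon k(\mathfrak{p})\to H$ of hyperfields. Conversely, given a pair $(x,\psi_x)$ with $x=\mathfrak{p}$, I take the composition
\[
A\longrightarrow A_{\mathfrak{p}}\longrightarrow k(\mathfrak{p})\stackrel{\psi_x}{\longrightarrow}H,
\]
which is evidently a hyperring homomorphism. A direct check shows the two constructions are mutually inverse: starting from $\varphi$ and recovering the composition above gives back $\varphi$ by the uniqueness in the universal property of localization, and conversely the kernel and induced residue-field map of the composed morphism recover $(\mathfrak{p},\psi_x)$.

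The step that requires the most care is verifying that the localization procedure used in the forward direction is legitimate. This is handled by Proposition \ref{cominclusion}, which tells us that for the commutative ring $A$ the notions of ring homomorphism and hyperring homomorphism out of $A$ coincide, and that the classical localization $A_{\mathfrak{p}}$ of $A$ (taken in rings) agrees with the hyperring localization described in \S 3.1; in particular its universal property is available in the category of hyperrings. Once this is in hand, the rest of the argument is routine. The main conceptual content has been absorbed into Theorem \ref{mainlemma} and the fact that $H$ has a unique prime ideal, so the proof is essentially a translation of the classical affine-scheme functor-of-points statement into the hyperring setting.
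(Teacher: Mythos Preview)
Your proposal is correct and follows essentially the same strategy as the paper: both directions hinge on Theorem~\ref{mainlemma} and the elementary fact that a hyperring homomorphism $A\to H$ with kernel $\mathfrak{p}$ factors through the residue field $k(\mathfrak{p})$. The only cosmetic differences are that the paper passes through $A/\mathfrak{p}$ and then $\Frac(A/\mathfrak{p})$ rather than localizing at $\mathfrak{p}$ first, and that for the inverse map the paper simply applies the global-sections functor (using Theorem~\ref{equivalence} to identify $\Gamma(\Spec H)=H$) instead of rebuilding the homomorphism from the pair $(x,\psi_x)$. One small citation quibble: Proposition~\ref{cominclusion} only asserts full faithfulness of the inclusion of rings into hyperrings; the universal property of localization in the hyperring category that you need is stated separately in the paragraph preceding Definition~3.5.
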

\begin{proof}
Let $f:A \to H$ be a homomorphism of hyperrings. Then this determines the point $\mathfrak{p}:=\ker(f) \in \Spec A$ and also $f$ factors through $\tilde{f}:A/\mathfrak{p} \to H$ and hence induces a homomorphism 
\[
\Frac(A/\mathfrak{p})=k(p) \longrightarrow H.
\]
It follows from Theorem \ref{mainlemma} that this determines a unique element in $\Hom_{\mathfrak{Lhs}}(\Spec H, X)$. Conversely, any given morphism $g:\Spec H \to X$ induces a homomorphism $\Gamma(X)=A \to \Gamma(\Spec H)=H$ (thanks to Theorem \ref{equivalence}). These are clearly inverses to each other. 
\end{proof}

\begin{mydef}
Let $X$ be a scheme and $H$ be a hyperfield.
\begin{enumerate}
\item 
We let $X(H)$ be the set of $H$-rational points of $X$, i.e., 
\[
X(H):=\Hom_{\mathfrak{Lhs}}(\Spec H, X).
\]
Whenever there is no possible confusion, we will simply write $X(H)=\Hom(\Spec H, X)$. 
\item
Let $X$ be a scheme over a field $k$ and $H$ be a hyperfield with a fixed homomorphism $\varphi:k \to H$. We let $X(H):=\Hom_k(\Spec H, X)$, i.e., the set of morphisms of locally hyperringed spaces from $\Spec H$ to $X$ which are compatible with $\varphi^\#:\Spec H \to \Spec k$. 
\end{enumerate}
\end{mydef}

\begin{rmk}
It follows from Proposition \ref{equivaelnceofcategories} that, when $X=\Spec A$ and $H$ is a hyperfield, we have $X(H)=\Hom(A,H)$ (as sets). 
\end{rmk}

\subsection{Fine topology on sets of rational points}\label{finetopology}
Let $X$ be a scheme over a scheme $S$. Then, in general, $X(S)$ is not equipped with any topology. In this section, we follow the idea of Lorscheid and Salgado in \cite{lorscheid2016remark} (or Lorscheid in \cite{lorscheid2015scheme}) to impose the \emph{fine topology} on sets of rational points of schemes over hyperfields with arbitrary topology. In what follows, a hyperfield with a topology is always assumed to be a topological hyperfield defined below. 

\begin{mydef}
A hyperfield $H$ is said to be a \emph{topological hyperfield} if $H$ is equipped with topology, satisfying the following conditions:
\begin{enumerate}
	\item 
	The multiplication map $H \times H \to H$, where $H \times H$ is equipped with product topology, is continuous. 
	\item 	
	$H^\times=H-\{0\}$ is open, and the inversion map $i:H^\times \to H^\times$ is continuous. 
\end{enumerate}
\end{mydef}

\begin{myeg}
One can easily check the following. 
\begin{enumerate}
	\item 
The Krasner hyperfield $\mathbb{K}$ with the topology $\{\emptyset, \{1\}, \{0,1\}\}$ is clearly a topological hyperfield. 
\item 
The tropical hyperfield $\mathbb{T}$ with the Euclidean topology is a topological hyperfield. 
\item 
The hyperfield of signs $\mathbb{S}$ with the topology $\{\emptyset, \{1\},\{-1\}, \{-1,1\}, \mathbb{S}\}$ is a topological hyperfield.
\end{enumerate}	
\end{myeg}

\begin{rmk}
We remark that, in \cite{lorscheid2015scheme}, Lorscheid implements the fine topology to ordered blue schemes to recast Berkovich analytification topologically. Also, a similar idea is considered by J.~Giansiracusa and N.~Giansiracusa in \cite{giansiracusa2014universal} based on their previous work \cite{noah} for the same purpose as Lorscheid, but with tropical schemes. To be a bit more precise, Giansiracusa-Giansiracusa show in \cite{giansiracusa2014universal} that the Berkovich analytification $X^{\textrm{an}}$ is homeomorphic to the space $\Trop(X)(\mathbb{R}_{max})$ of rational points of the universal scheme-theoretic tropicalization $\Trop(X)$ over the tropical semifield $\mathbb{R}_{max}$ endowed with the strong Zariski topology (\cite[Definition 3.4.1]{giansiracusa2014universal}). 
\end{rmk}

The recipe is as follows. Let $X$ be a scheme over a field $k$ (or over $\mathbb{Z}$) and $H$ be a hyperfield which is equipped with a topology $\mathcal{T}$. Suppose that a structural map $\varphi:k \to H$ is fixed. First, we consider when $X$ is an affine, i.e., $X=\Spec A$. In this case, $X(H)=\Hom_k(\Spec H, X)=\Hom_k(A,H)$ and hence we consider the following identification:
\begin{equation}
X(H)=\Hom_k(A,H) \subseteq \prod_{a \in A}H^{(a)}. 
\end{equation}
We give the product topology on $\prod_{a \in A}H^{(a)}$ by using the topology $\mathcal{T}$ on $H$ and then impose the subspace topology $\mathcal{T}_p$ on $X(H)$. This is called the \emph{affine topology}. We note that the topology $\mathcal{T}_p$ is the coarsest topology on $\Hom_k(A,H)$ such that for each $a \in A$, the evaluation map 
\[
ev_a:\Hom_k(A,H) \to H,\quad f \mapsto f(a) 
\]
is continuous. In this case, one can easily check that this topology is functorial in both $A$ and $H$. 

Next, consider the case when $X$ is a scheme over a field $k$ (or over $\mathbb{Z}$), with a structural map $\varphi:k\to H$. The \emph{fine topology} $\mathcal{T}_F$ on $X(H)$ is the finest topology such that for any $k$-morphism $f_Y:Y \to X$ from an affine $k$-scheme $Y$ to $X$, the induced map 
\[
f_Y(H):Y(H) \to X(H)
\] 
is continuous, where $Y(H)$ is equipped with the affine topology.

The following in the classical setting is proven in \cite{lorscheid2016remark}, and one may apply a similar argument in \cite[Theorem A]{lorscheid2016remark} to prove the following:

\begin{pro}\label{affinetopology}
With the above notations, if $X$ is an affine scheme and $H$ is a hyperfield equipped with topology, then the affine topology and the fine topology agree on $X(H)$. 
\end{pro}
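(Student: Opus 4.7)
The plan is to prove the two inclusions of topologies separately, using the universal characterization of $\mathcal{T}_p$ as the initial topology for the evaluation maps and the universal characterization of $\mathcal{T}_F$ as the final topology for the family of maps $f_Y(H)$.

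First, I would establish $\mathcal{T}_F \subseteq \mathcal{T}_p$. By construction, $\mathcal{T}_F$ is the finest topology on $X(H)$ such that $f_Y(H): Y(H) \to X(H)$ is continuous for every morphism $f_Y: Y \to X$ from an affine $k$-scheme $Y$. Applying this to $Y = X$ with $f_Y = \mathrm{id}_X$, the map $\mathrm{id}_{X(H)}: (X(H), \mathcal{T}_p) \to (X(H), \mathcal{T}_F)$ must be continuous, which forces $\mathcal{T}_F \subseteq \mathcal{T}_p$.

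Next I would show the reverse inclusion $\mathcal{T}_p \subseteq \mathcal{T}_F$. By the maximality of $\mathcal{T}_F$, it suffices to verify that $\mathcal{T}_p$ itself has the defining property of fine Zariski topology, i.e. that for every affine $k$-scheme $Y = \Spec B$ and every $k$-morphism $f_Y: Y \to X = \Spec A$, the induced map $f_Y(H): Y(H) \to X(H)$ is continuous when both sides carry affine topology. The morphism $f_Y$ corresponds by Proposition \ref{equivaelnceofcategories} to a $k$-algebra homomorphism $\varphi: A \to B$, and $f_Y(H)$ sends $g \in \Hom_k(B,H)$ to $g \circ \varphi$. Recall that affine topology on $X(H) = \Hom_k(A,H)$ is the initial topology with respect to the evaluation maps $ev_a: \Hom_k(A,H) \to H$ for $a \in A$; hence continuity of $f_Y(H)$ reduces to continuity of each composition $ev_a \circ f_Y(H)$. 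But
\[
(ev_a \circ f_Y(H))(g) = g(\varphi(a)) = ev_{\varphi(a)}(g),
\]
so $ev_a \circ f_Y(H) = ev_{\varphi(a)}$, which is continuous on $(Y(H), \mathcal{T}_p)$ by definition of affine topology. This yields $\mathcal{T}_p \subseteq \mathcal{T}_F$.

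There is essentially no obstacle here; the argument is formal and the only input beyond the definitions is Proposition \ref{equivaelnceofcategories}, which identifies $X(H)$ with $\Hom_k(A,H)$, together with the observation that pulling back along a hyperring homomorphism commutes with evaluation. Once both inclusions are in place, the proposition follows.
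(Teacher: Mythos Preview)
Your argument is correct and is the standard one. The paper does not supply its own proof here but simply invokes \cite[Theorem A]{lorscheid2016remark}, so there is no independent approach to compare against; your two-inclusion argument via the universal properties of the initial topology (for $\mathcal{T}_p$) and the final topology (for $\mathcal{T}_F$) is exactly what that reference contains. One minor remark: the correspondence between the morphism $f_Y\colon \Spec B\to \Spec A$ and a $k$-algebra homomorphism $\varphi\colon A\to B$ is ordinary affine scheme theory rather than Proposition~\ref{equivaelnceofcategories} (the latter identifies $X(H)$ with $\Hom_k(A,H)$, which you also use), but this does not affect the validity of the argument.
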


The fine topology is functorial in the following sense. 

\begin{pro}\label{inducedlemma}
Let $f:Y \to X$ be a morphism of schemes and $H$ be a topological hyperfield. Then the induced map, $f(H):Y(H) \to X(H)$ is continuous, where $Y(H)$ and $X(H)$ are equipped with the fine topology. 
\end{pro}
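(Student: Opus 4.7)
The plan is to use the universal property of the final topology. Recall that the fine Zariski topology on $X(H)$ is, by definition, the finest topology for which every induced map $g(H)\colon Z(H)\to X(H)$ is continuous as $g\colon Z\to X$ ranges over all morphisms from an affine scheme $Z$ to $X$ (with $Z(H)$ carrying affine topology). This is precisely the final topology on $X(H)$ with respect to the family $\{g(H)\}_{g}$, and it carries the standard universal property: a map $\Phi\colon X(H)\to T$ is continuous if and only if $\Phi\circ g(H)\colon Z(H)\to T$ is continuous for every such $g$. I will not need $\Phi$ in this direction; rather, I will use the dual form, that continuity of a map \emph{out of} $Y(H)$ reduces to checking continuity of its precompositions with maps $h(H)\colon Z(H)\to Y(H)$ for $Z$ affine and $h\colon Z\to Y$ a morphism.

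First, I would record the general fact that the final topology enjoys this characterization: a map $\Psi\colon Y(H)\to T$ is continuous if and only if for every affine scheme $Z$ and every morphism $h\colon Z\to Y$, the composite $\Psi\circ h(H)\colon Z(H)\to T$ is continuous, where $Z(H)$ carries affine topology. This is a direct unwinding of the definition of the finest topology making the family $\{h(H)\}_{h}$ continuous. Applying this characterization with $\Psi = f(H)$ and $T = X(H)$, continuity of $f(H)$ is equivalent to continuity of $f(H)\circ h(H)$ for every such pair $(Z,h)$.

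Next, I would invoke functoriality of the $H$-points construction, which follows tautologically from $X(H) = \Hom_{\mathfrak{Lhs}}(\Spec H, X)$: the identity
\begin{equation*}
f(H)\circ h(H) \;=\; (f\circ h)(H)
\end{equation*}
holds as maps $Z(H)\to X(H)$. Since $f\circ h\colon Z\to X$ is a morphism from the affine scheme $Z$ to $X$, the very definition of fine Zariski topology on $X(H)$ guarantees that $(f\circ h)(H)\colon Z(H)\to X(H)$ is continuous. This exhibits the required continuity of every precomposite, and the proof concludes.

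I do not expect any serious obstacle here; the argument is purely formal, essentially the observation that the fine Zariski topology is the colimit (final) topology along the diagram of affine schemes mapping into $X$, and colimit topologies transport automatically across morphisms by naturality. The only point requiring a moment's care is to make sure the structural compatibility (over $k$ or $\mathbb{Z}$) is preserved along the composition $Z\xrightarrow{h} Y\xrightarrow{f} X$, which is clear since $f$ and $h$ are morphisms over the same base. No compatibility assumption between the topology $\mathcal{T}$ on $H$ and its algebraic structure is used, consistent with the philosophy adopted throughout \S\ref{finetopology}.
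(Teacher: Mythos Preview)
Your proposal is correct and is precisely the standard final-topology argument; the paper itself gives no details and simply refers to \cite[Proposition 2.1]{lorscheid2016remark}, whose argument is exactly the one you have written out. There is nothing to add.
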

\begin{proof}
The same argument as in \cite[Proposition 2.1]{lorscheid2016remark} shows the result. 
\end{proof}

\begin{pro}\label{openembedding}
Let $X$ be a scheme and $H$ be a topological hyperfield. If $\{U_i\}$ is an affine open covering of $X$. Then $\{U_i(H)\}$ is an open covering of $X(H)$. 
\end{pro}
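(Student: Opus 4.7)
The statement splits into two independent claims: (i) $X(H)=\bigcup_i U_i(H)$ as sets, and (ii) each $U_i(H)$ is open in $X(H)$ with respect to the fine Zariski topology.

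For (i), the key observation is that $\Spec H$ is a one-point space, since a hyperfield has a unique prime ideal $\{0_H\}$. Hence any morphism $f:\Spec H \to X$ of locally hyperringed spaces sends the unique point $y$ to some $x\in X$. Choosing an index $i$ with $x\in U_i$, I would factor $f$ through the open immersion $U_i\hookrightarrow X$ by the standard argument for locally (hyper)ringed spaces: restrict $f^{\#}$ to $U_i$, and note that the local condition on stalks is unaffected since the relevant stalk is $\mathcal{O}_{X,x}=\mathcal{O}_{U_i,x}$. This produces a morphism $\Spec H \to U_i$ whose image in $X(H)$ recovers $f$.

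For (ii), by the defining universal property of fine Zariski topology, it suffices to check that for every affine scheme $Y$ and every morphism $g:Y\to X$, the preimage $g(H)^{-1}\bigl(U_i(H)\bigr)$ is open in $Y(H)$, where $Y(H)$ carries the affine topology by Proposition \ref{affinetopology}. Unwinding the definitions and using (i), this preimage consists exactly of those $h:\Spec H\to Y$ whose image point lies in the open set $W:=g^{-1}(U_i)\subseteq Y$. Writing $Y=\Spec A$ and covering $W$ by basic open sets $D(f_j)$, the preimage decomposes as the union over $j$ of the sets
\[
S_j:=\{h\in Y(H)\mid h(y)\in D(f_j)\}.
\]
Under the identification $Y(H)=\Hom(A,H)$ from Proposition \ref{equivaelnceofcategories}, a morphism $h$ corresponds to $\varphi_h:A\to H$ with $h(y)=\ker(\varphi_h)$, so
\[
S_j=\{\varphi\in \Hom(A,H)\mid \varphi(f_j)\neq 0_H\}.
\]
Under the embedding $\Hom(A,H)\hookrightarrow \prod_{a\in A}H^{(a)}$, this $S_j$ is the preimage of $H-\{0_H\}$ under the projection to the $f_j$-coordinate; since $H-\{0_H\}$ is open by hypothesis, $S_j$ is open in the affine topology. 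Hence $g(H)^{-1}\bigl(U_i(H)\bigr)=\bigcup_j S_j$ is open, completing the verification.

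The only place where an assumption on the topology of $H$ is used is the final step, where openness of $H-\{0_H\}$ is precisely the condition guaranteeing that the $H$-theoretic analogues of principal affine opens $D(f)$ are open in affine topology. I do not anticipate a substantial obstacle; the argument is a careful bookkeeping exercise matching morphisms $\Spec H\to X$ with points of $X$ plus residue field data, combined with the universal property defining the fine Zariski topology.
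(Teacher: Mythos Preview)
Your proof is correct and follows essentially the same route as the paper: both arguments hinge on the identification $D(f)(H)=\{\varphi\in\Hom(A,H)\mid \varphi(f)\neq 0_H\}$ and the hypothesis that $H-\{0_H\}$ is open. The only difference is organizational: the paper first reduces $X$ to the affine case (deferring to \cite[Theorem B]{lorscheid2016remark}) and then works directly in affine topology, whereas you keep $X$ general and instead invoke the universal property of the fine topology, testing openness against all affine $g:Y\to X$; this makes your write-up more self-contained but is the same argument unwound.
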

\begin{proof}
The standard argument, as in the proof of \cite[Theorem B]{lorscheid2016remark}, reduces our proposition to the case when $X$ is affine, say $X=\Spec A$. We may also assume that $U_i$ is a basic open subset $D(f_i)$ of $X$ for some $f_i \in A$. In this case, since the affine topology agrees with the fine topology, we may further assume that $X(H)$ and $U_i(H)$ are equipped with the affine topology. 

Now, for each $f \in A$, let $\alpha:A \to A_f$ be the localization. One can easily see that following is an injection:
\begin{equation}\label{eq: loc}
\tilde{\alpha}: D(f)(H)=\Hom(A_f, H) \to X(H)=\Hom(A,H), \quad \varphi \mapsto \varphi \circ \alpha.
\end{equation}
Furthermore, we have
\begin{equation}\label{eq: open cover}
\textrm{Img}(\tilde{\alpha}) = \{\psi \in \Hom(A,H) \mid \psi(f) \neq 0\}.
\end{equation}
From Proposition \ref{inducedlemma}, $\tilde{\alpha}$ is continuous. We claim that $\tilde{\alpha}$ is an open map. In fact, let $Z$ be an open subset of $\Hom(A_f,H)$ (with the affine topology). We may assume that
\[
Z=\{\varphi:A_f \to H \mid \varphi(\frac{a}{f^n}) \in W\}
\]
for some fixed $\frac{a}{f^n} \in A_f$ and a fixed open subset $W$ of $H$. We have the following from \eqref{eq: loc} and \eqref{eq: open cover}:
\[
\tilde{\alpha}(Z)= \{\psi:A \to H \mid \psi(f) \neq 0, \psi(a)\psi(f)^{-n} \in W\}.
\]
Let's consider $T:=\prod_{a \in A}H^{(a)}$ as the set of functions from $A$ to $H$ (not necessarily homomorphisms). Then, the following is an open subset of $T$
\[
T_W=\{g \in T \mid g(f) \neq 0, ~~g(a)g(f)^{-n} \in W\}
\]
since the multiplication and inversion of $H$ are continuous and $H^\times$ is an open subset of $H$. Now, we have that
\[
\tilde{\alpha}(Z)=T_W \cap \Hom(A,H),
\]
and hence $\tilde{\alpha}(Z)$ is an open subset of $\Hom(A,H)$, showing that $\tilde{\alpha}$ is an open embedding. 

Finally, if $\{U(f_i)\}$ is an open cover of $\Spec A$, we can find $n \in \mathbb{N}$ such that $\sum_{i=1}^n f_i =1$. It follows that for any $\psi \in \Hom(A,H)$, one has:
\[
1=\psi(1)=\psi(\sum_{i=1}^nf_i) \in \sum_{i=1}^n \psi(f_i). 
\]
In particular, for each $\psi \in \Hom(A,H)$, $\psi(f_i) \neq 0$ for some $i=1,\dots,n$. It follows from the description \eqref{eq: open cover} that $\{U(f_i)(H)\}$ is an open cover of $X(H)$. 
\end{proof}

\section{Geometry of hyperfields in a view of classical scheme theory}\label{schemetheory}

Let $X$ be a scheme over $\mathbb{Z}$ and $\mathbb{K}$ be the Krasner hyperfield. In \cite{con3}, Connes and Consani showed that 
\begin{equation}\label{sets}
X=X(\mathbb{K}) \textrm{ (as sets)}.
\end{equation}
One can see that the bijection in \eqref{sets} easily follows from Theorem \ref{mainlemma}. We enrich the bijection \eqref{sets} to a homeomorphism in a natural way. To this end, we impose the topology $\mathcal{T}$ on $\mathbb{K}=\{0,1\}$ in such a way that the open sets are $\emptyset$, $\{1\}$, and $\{0,1\}$. Then we have the following:

\begin{pro}\label{schemehomeo}
Let $X=\Spec A$ be an affine scheme over $\mathbb{Z}$. Then $X(\mathbb{K})$ (equipped with the fine topology) is homeomorphic to $X$ (equipped with the Zariski topology). 
\end{pro}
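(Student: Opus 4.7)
The plan is to exhibit an explicit bijection $\Phi \colon X \to X(\mathbb{K})$ coming from the functorial identification already established, and then check that the subbasis defining affine topology on $X(\mathbb{K})$ pulls back under $\Phi$ to the standard basis of Zariski open sets on $X = \Spec A$.

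First I would construct $\Phi$ using Corollary \ref{krasner} together with Remark \ref{krasnerrmk}: since for every residue field $k(\mathfrak{p})$ there exists exactly one homomorphism $k(\mathfrak{p}) \to \mathbb{K}$, a morphism $\Spec \mathbb{K} \to X$ is uniquely determined by a point of $X$. Passing through Proposition \ref{equivaelnceofcategories}, the corresponding element of $X(\mathbb{K}) = \Hom(A, \mathbb{K})$ is the homomorphism $\varphi_{\mathfrak{p}} \colon A \to \mathbb{K}$ with $\varphi_{\mathfrak{p}}(a) = 0$ iff $a \in \mathfrak{p}$. This gives a set bijection $\Phi(\mathfrak{p}) = \varphi_{\mathfrak{p}}$.

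Next I would invoke Proposition \ref{affinetopology} to identify the fine Zariski topology on $X(\mathbb{K})$ with the affine topology. With topology $\mathcal{T} = \{\emptyset, \{1\}, \mathbb{K}\}$ on $\mathbb{K}$, a subbasis for the product topology on $\prod_{a \in A} \mathbb{K}^{(a)}$ consists of the preimages $\pi_a^{-1}(\{1\})$, and restricting to $X(\mathbb{K})$ gives the subbasic open sets
\[
W_a := \{\varphi \in \Hom(A,\mathbb{K}) \mid \varphi(a) = 1\}.
\]
Under $\Phi$, these correspond bijectively to
\[
\Phi^{-1}(W_a) = \{\mathfrak{p} \in \Spec A \mid a \notin \mathfrak{p}\} = D(a),
\]
the standard basic open subsets of $X$ in the Zariski topology. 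Taking finite intersections on both sides, one has $W_{a_1} \cap \cdots \cap W_{a_n}$ corresponding to $D(a_1) \cap \cdots \cap D(a_n) = D(a_1 \cdots a_n)$, so the generated bases on the two sides match exactly. Therefore $\Phi$ is a homeomorphism.

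I do not anticipate a real obstacle here: the statement is essentially a bookkeeping exercise once Theorem \ref{mainlemma}, Proposition \ref{affinetopology}, and the explicit description of the topology on $\mathbb{K}$ are in place. The only point requiring any care is confirming that a subbasis is enough — that is, that no additional open set is needed to generate the affine topology — which is immediate since the only nontrivial open subset of $\mathbb{K}$ is $\{1\}$, so the $W_a$ really do form a subbasis.
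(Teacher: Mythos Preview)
Your proposal is correct and follows essentially the same approach as the paper: both arguments identify the bijection $\mathfrak{p}\mapsto\varphi_{\mathfrak{p}}$ and then match the subbasic open sets $W_a=\{\varphi\mid\varphi(a)=1\}$ with the principal opens $D(a)$. The only cosmetic difference is that the paper verifies openness of $\varphi(D(a))$ and continuity of $\varphi$ separately, whereas you package both directions into a single subbasis-matching argument.
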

\begin{proof}
Let $\varphi:X \to X(\mathbb{K})=\Hom(A,\mathbb{K})$ be the set bijection from Theorem \ref{mainlemma}, i.e., 
\[
\varphi:X \to X(\mathbb{K})=\Hom(A,\mathbb{K}), \quad \mathfrak{p} \mapsto \varphi(\mathfrak{p}):=a_\mathfrak{p},
\] 
where $a_\mathfrak{p}(x)=0 $ if and only if $x \in \mathfrak{p}$.  For $a \in A$, let $D(a)$ be the basic open subset of $X$, i.e., $D(a)=\{\mathfrak{p} \in X \mid a \not \in \mathfrak{p}\}$. Then, we have
\[
\varphi(D(a))=\{f \in \Hom(A,\mathbb{K}) \mid a \notin \ker(f)\}=\Hom(A,\mathbb{K})\bigcap \left(\prod_{r \in A} U_r\right),
\]
where $U_a=\{1\}$ and $U_r=\mathbb{K}$ for all $r \neq a \in A$. Clearly $\prod_{r \in A} U_r$ is an open subset of $\prod_{r \in A}\mathbb{K}^{(r)}$ and hence $\Hom(A,\mathbb{K})\bigcap \left(\prod_{r \in A} U_r\right)$ is an open subset of $\Hom(A,\mathbb{K})$.

Conversely, suppose that $U$ is an open subset of $\Hom(A,\mathbb{K})$. We may assume that $U=\prod_{a \in A}U_a$ such that $U_a=\mathbb{K}$ for all but finitely many $a_1,...,a_n$, where $U_{a_i}=\{1\}$. One can easily check that 
\[
\varphi^{-1}(U)=\bigcap_{i=1}^n D(a_i).
\]  
This proves that $\varphi$ is a homeomorphism. 
\end{proof}

Indeed, Proposition \ref{schemehomeo} can be generalized to any scheme over $\mathbb{Z}$ as follows. 

\begin{pro}\label{SPECrepresentable}
Let $X$ be a scheme over $\mathbb{Z}$. Then $X(\mathbb{K})$, equipped with the fine topology, is homeomorphic to $X$. 
\end{pro}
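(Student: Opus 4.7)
The plan is to reduce the general case to the affine case already handled in Proposition \ref{schemehomeo}, using the gluing tools provided by Propositions \ref{inducedlemma} and \ref{openembedding}. First, I would define the candidate map $\varphi \colon X \to X(\mathbb{K})$ using Theorem \ref{mainlemma} and Remark \ref{krasnerrmk}: a point $x \in X$ determines (and is determined by) the morphism $\Spec \mathbb{K} \to X$ sending the unique point to $x$, composed with the unique homomorphism $k(x) \to \mathbb{K}$. This is the unique extension of the bijection produced in Proposition \ref{schemehomeo} compatibly with open immersions. So as a set-theoretic bijection $\varphi$ is already in hand.

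Next, I would pick an affine open covering $\{U_i\}$ of $X$. Since $\mathbb{K}\setminus\{0\}=\{1\}$ is open in the prescribed topology on $\mathbb{K}$, Proposition \ref{openembedding} guarantees that $\{U_i(\mathbb{K})\}$ is an open covering of $X(\mathbb{K})$ with respect to fine Zariski topology. Moreover, the inclusion $U_i \hookrightarrow X$ induces a continuous map $U_i(\mathbb{K}) \to X(\mathbb{K})$ by Proposition \ref{inducedlemma}, where $U_i(\mathbb{K})$ carries its own fine Zariski topology (which coincides with affine topology by Proposition \ref{affinetopology}, since $U_i$ is affine). The map $\varphi$ clearly restricts to the bijection $\varphi_i \colon U_i \to U_i(\mathbb{K})$ of Proposition \ref{schemehomeo}, which is already known to be a homeomorphism onto $U_i(\mathbb{K})$.

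The one genuine point to verify is that the inclusion $U_i(\mathbb{K}) \hookrightarrow X(\mathbb{K})$ is a topological embedding, i.e.\ that the subspace topology on $U_i(\mathbb{K}) \subseteq X(\mathbb{K})$ coincides with its intrinsic fine Zariski (equivalently affine) topology. Continuity gives one inclusion of topologies for free. For the other, given an open set $V \subseteq U_i(\mathbb{K})$ in its intrinsic topology, I would show $V$ is open in $X(\mathbb{K})$ by appealing to the universal characterization of the fine Zariski topology: it suffices to check that for every morphism $f \colon Y \to X$ with $Y$ affine, $f(\mathbb{K})^{-1}(V)$ is open in $Y(\mathbb{K})$. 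Using that $U_i \hookrightarrow X$ is an open immersion, $f^{-1}(U_i)$ is an open subscheme of $Y$ that can be covered by basic affine opens $Y_j = \Spec A[g_j^{-1}]$ on which $f$ factors through $U_i$; the resulting maps $Y_j(\mathbb{K}) \to U_i(\mathbb{K})$ are continuous by the affine case (functoriality of affine topology in the ring argument), so the preimage of $V$ is open on each $Y_j(\mathbb{K})$, and the $Y_j(\mathbb{K})$ form an open cover of $f(\mathbb{K})^{-1}(U_i(\mathbb{K})) \subseteq Y(\mathbb{K})$. This is the main technical step; everything else is formal.

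With the open embedding $U_i(\mathbb{K}) \hookrightarrow X(\mathbb{K})$ established, $\varphi$ restricts to a homeomorphism between the open piece $U_i$ of $X$ and the open piece $U_i(\mathbb{K})$ of $X(\mathbb{K})$. Since these open pieces cover the respective spaces and the restrictions of $\varphi$ agree on overlaps (they are all the same set-theoretic bijection induced by Theorem \ref{mainlemma}), $\varphi$ is itself a homeomorphism. I expect the main obstacle to be the careful verification that the induced topology on $U_i(\mathbb{K})$ as a subspace of $X(\mathbb{K})$ matches the affine topology on $U_i(\mathbb{K})$; once this is in place, the gluing argument is entirely standard.
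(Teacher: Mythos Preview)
Your proposal is correct and follows essentially the same route as the paper: define the global bijection via Corollary \ref{krasner}/Remark \ref{krasnerrmk}, cover $X$ by affine opens, invoke Proposition \ref{openembedding} to get an open cover $\{U_i(\mathbb{K})\}$ of $X(\mathbb{K})$, and conclude from the affine case (Proposition \ref{schemehomeo}). The paper's proof is terser and does not spell out the verification that the subspace topology on $U_i(\mathbb{K})\subseteq X(\mathbb{K})$ agrees with its intrinsic affine topology---it implicitly relies on the stronger open-embedding statement from \cite[Theorem~B]{lorscheid2016remark}---whereas you supply this step explicitly via the universal property of the fine topology; this extra care is warranted given how Proposition \ref{openembedding} is stated here.
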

\begin{proof}
Let $i:X\to X(\mathbb{K})$ be the set-bijection described in Corollary \ref{krasner}. Let's fix an affine open covering $\{U_i\}$ of $X$, where $U_i=\Spec A_i$. Consider the restriction $r_i:=i|_{U_i}:U_i \to U_i(\mathbb{K})$ of $i$ on each $U_i$. It follows from Proposition \ref{openembedding} that each $U_i(\mathbb{K})$ is an open subset of $X(\mathbb{K})$ and $\{U_i(\mathbb{K})\}$ is an open covering of $X(\mathbb{K})$. Now, it follows from Proposition \ref{schemehomeo} that each $r_i$ is a homeomorphism and the desired result follows.
\end{proof}

\begin{cor}
Let $\mathcal{F}$ be the functor from the category $\mathfrak{Schm}$ of schemes to the category $\mathfrak{Top}$ of topological spaces sending a scheme $X$ to its underlying topological space $|X|$. Then $\mathcal{F}$ is isomorphic to the functor $\Hom(\Spec \mathbb{K},-)$. In particular, by considering the affine case, the functor $\Spec$, from the category of commutative rings to $\mathfrak{Top}$, is isomorphic to $\Hom(-,\mathbb{K})$. 
\end{cor}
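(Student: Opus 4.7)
The plan is to upgrade the pointwise homeomorphism $|X| \cong X(\mathbb{K})$ from Proposition \ref{SPECrepresentable} into a natural isomorphism of functors $\mathcal{F} \Rightarrow \Hom_{\mathfrak{Lhs}}(\Spec\mathbb{K}, -)$, from which representability of $\mathcal{F}$ by $\Spec\mathbb{K}$ follows by definition. Proposition \ref{SPECrepresentable} already supplies the homeomorphism $i_X : |X| \to X(\mathbb{K})$ for each scheme $X$, and Proposition \ref{inducedlemma} guarantees that any morphism $f : X \to Y$ of schemes induces a continuous map $f(\mathbb{K}) : X(\mathbb{K}) \to Y(\mathbb{K})$. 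The only remaining task is therefore to check that the collection $\{i_X\}$ is a natural transformation.

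To this end, I would first recall from Corollary \ref{krasner} together with Remark \ref{krasnerrmk} that $i_X(x)$ is the \emph{unique} morphism $\Spec\mathbb{K} \to X$ in $\mathfrak{Lhs}$ whose topological image is $\{x\}$. This uniqueness is the crucial combinatorial ingredient that makes naturality essentially automatic. Indeed, given a morphism $f : X \to Y$ of schemes and a point $x \in |X|$, the composite $f(\mathbb{K})(i_X(x)) = f \circ i_X(x)$ is a morphism $\Spec\mathbb{K} \to Y$ whose topological image is the singleton $\{f(x)\}$; by the uniqueness statement of Remark \ref{krasnerrmk} applied to $Y$, this composite must coincide with $i_Y(f(x)) = i_Y(|f|(x))$. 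Hence the naturality square $f(\mathbb{K}) \circ i_X = i_Y \circ |f|$ commutes, and since all four arrows are continuous the diagram lives in $\mathfrak{Top}$. The family $\{i_X\}$ is thus a natural isomorphism $\mathcal{F} \Rightarrow \Hom_{\mathfrak{Lhs}}(\Spec\mathbb{K},-)$, which is precisely representability of $\mathcal{F}$ by $\mathbb{K}$.

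For the second sentence of the corollary, one simply restricts the natural isomorphism above to the full subcategory of affine schemes and composes with the identification $X(\mathbb{K}) = \Hom(A,\mathbb{K})$ from Proposition \ref{equivaelnceofcategories}; this yields representability of the functor $A \mapsto |\Spec A|$ by $\mathbb{K}$ on the category of commutative rings. I do not anticipate any real obstacle here: all of the substantive content—the pointwise homeomorphism, the continuity of induced maps, and the uniqueness of the $\mathbb{K}$-point lying over each scheme-theoretic point—is already in place, and the corollary amounts to the bookkeeping observation that these pieces assemble into a natural isomorphism. The only mildly non-trivial step is invoking uniqueness in Remark \ref{krasnerrmk} to identify $f \circ i_X(x)$ with $i_Y(|f|(x))$, and that identification is immediate from the fact that both morphisms have the same topological image in $Y$.
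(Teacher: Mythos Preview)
Your proposal is correct and follows essentially the same approach as the paper: both invoke Proposition \ref{SPECrepresentable} for the pointwise homeomorphism and Proposition \ref{inducedlemma} for continuity of the induced maps. Your write-up is in fact more careful than the paper's own one-line proof, since you explicitly verify the naturality square via the uniqueness statement of Remark \ref{krasnerrmk}, whereas the paper simply asserts that the homeomorphism of Proposition \ref{SPECrepresentable} ``is functorial in $X$'' without spelling out why.
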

\begin{proof}
For the notational convenience, we let $\mathcal{G}:=\Hom(\Spec \mathbb{K},-)$. For each scheme $X$, we let $\eta_X:\mathcal{F}(X)=|X| \to \mathcal{G}(X)=X(\mathbb{K})$ be the homeomorphism in Proposition \ref{SPECrepresentable}. Then, for each $\mathfrak{p} \in |X|$, we have $\eta_X(\mathfrak{p}):\Spec \mathbb{K} \to X$ such that the image of $\eta_X(\mathfrak{p})$ is $\mathfrak{p} \in X$. In particular, for a morphism of schemes $f:X\to Y$, one can easily see that the following diagram commutes:
\[
\begin{tikzcd}[row sep=large, column sep=1.5cm]
\mathcal{F}(X)\arrow{r}{\eta_X}\arrow{d}[swap]{\mathcal{F}(f)}
& \mathcal{G}(X) \arrow{d}{\mathcal{G}(f)} \\
\mathcal{F}(Y) \arrow{r}[swap]{\eta_Y} 
& \mathcal{G}(Y)
\end{tikzcd}
\]
This proves that the functors $\mathcal{F}$ and $\mathcal{G}$ are isomorphic. 
\end{proof}


\section{Geometry of hyperfields in a view of Berkovich theory}\label{berkovich}

In this section, we study the Berkovich analytification of an algebraic variety in terms of the tropical hyperfield $\mathbb{T}$. We also consider a possible connection to tropical geometry and the Berkovich analytification of affine algebraic group schemes. We note that Berkovich used the multiplicative notation (with $\mathbb{R}_{\geq 0}$), however, we will use the additive notation to be compatible with the additive notation of $\mathbb{T}$ and this makes no difference. 

\subsection{Analytification is representable}
We generalize and prove the remark that Baker and Bowler made (for the affine case) in \cite[Example 5.4]{baker2016matroids}; Berkovich analytification functor is isomorphic to the functor $\Hom(\Spec \mathbb{T},-)$. In what follows, we always assume that $k$ is a complete non-Archimedean valued field and let $\mathbb{T}$ be the tropical hyperfield. As we mentioned earlier, we will use the additive notation and all valuations will be assumed to be complete and non-Archimedean unless otherwise stated. We also use the terms \emph{multiplicative seminorm} and \emph{semivaluation} interchangeably.

Let's first see the affine case. For a normed algebra $(\mathcal{A},|-|_\mathcal{A})$ over a field $k$ with a valuation $\nu$, we define the following notation:
\begin{equation}\label{bounded}
\Hom_{b,k}(\mathcal{A},\mathbb{T}):=\{f:\mathcal{A} \to \mathbb{T} \mid \exists C_f\in \mathbb{R}\textrm{ such that } f(x) \leq C_f+|x|_\mathcal{A}\textrm{ for all }x \in \mathcal{A}  \textrm{ and } f\mid_{k}=\nu\}.
\end{equation}

\begin{pro}\label{extension them}
Let $(k,\nu)$ be a valued field, $\mathcal{A}$ be a normed algebra over $k$, and $\hat{\mathcal{A}}$ be the completion of $\mathcal{A}$. Then we have the following bijection of sets:
\[
\Hom_{b,k}(\mathcal{A},\mathbb{T})=\Hom_{b,k}(\hat{\mathcal{A}},\mathbb{T}).
\]
\end{pro}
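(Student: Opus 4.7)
The plan is to build the bijection by restriction in one direction and completion (via Cauchy-limits) in the other, after first observing that a bounded hyperring homomorphism $f\colon\mathcal{A}\to\mathbb{T}$ is automatically non-expansive, which is what makes the extension work. More precisely, from the boundedness hypothesis $f(x)\le C_f+|x|_{\mathcal{A}}$ together with multiplicativity $f(x^n)=n\,f(x)$ and submultiplicativity $|x^n|_{\mathcal{A}}\le n|x|_{\mathcal{A}}$, one gets $n\,f(x)\le C_f+n|x|_{\mathcal{A}}$ for every $n\ge 1$, so dividing by $n$ and letting $n\to\infty$ yields $f(x)\le|x|_{\mathcal{A}}$ for all $x\in\mathcal{A}$. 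Thus every bounded hyperring homomorphism to $\mathbb{T}$ is in fact non-expansive, hence uniformly continuous.

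The restriction map $\rho\colon\Hom_{b,k}(\hat{\mathcal{A}},\mathbb{T})\to\Hom_{b,k}(\mathcal{A},\mathbb{T})$ is clearly well-defined, and its injectivity follows from the previous paragraph: any two such extensions agree on the dense subset $\mathcal{A}$ and are continuous, hence agree on $\hat{\mathcal{A}}$. For surjectivity, given $f\in\Hom_{b,k}(\mathcal{A},\mathbb{T})$ and $x\in\hat{\mathcal{A}}$, pick a Cauchy sequence $(x_n)\subset\mathcal{A}$ converging to $x$, and set $\hat{f}(x):=\lim_{n\to\infty}f(x_n)$. The non-expansive property gives, via the reversibility axiom of the hyperaddition in $\mathbb{T}$ (i.e.\ $f(x_n)\in f(x_n-x_m)\oplus f(x_m)$, which forces $|f(x_n)-f(x_m)|\le f(x_n-x_m)\le|x_n-x_m|_{\mathcal{A}}$ once $f(x_n)\ne f(x_m)$), that $(f(x_n))$ is Cauchy in $\mathbb{R}$ (and even stays in $\mathbb{R}$ once we check it does not drift to $-\infty$; boundedness and independence of representative are handled by the same inequality). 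Independence of the chosen sequence, and the identity $\hat{f}|_{\mathcal{A}}=f$, are immediate from continuity.

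It remains to verify that $\hat{f}$ is a hyperring homomorphism. Multiplicativity $\hat{f}(xy)=\hat{f}(x)\odot\hat{f}(y)$ follows by passing to the limit in $f(x_ny_n)=f(x_n)+f(y_n)$. The compatibility with the hyperaddition, $\hat{f}(x+y)\in\hat{f}(x)\oplus\hat{f}(y)$, splits in two cases. If $\hat{f}(x)\ne\hat{f}(y)$, then for $n$ large enough $f(x_n)\ne f(y_n)$, so $f(x_n+y_n)=\max\{f(x_n),f(y_n)\}$, and taking limits gives $\hat{f}(x+y)=\max\{\hat{f}(x),\hat{f}(y)\}=\hat{f}(x)\oplus\hat{f}(y)$. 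If instead $\hat{f}(x)=\hat{f}(y)=c$, then $\hat{f}(x)\oplus\hat{f}(y)=[-\infty,c]$, and from $f(x_n+y_n)\le\max\{f(x_n),f(y_n)\}$ we pass to the limit to obtain $\hat{f}(x+y)\le c$, as required. Finally, $\hat{f}|_k=\nu$ because this already holds for $f$, and the boundedness constant $C_f$ is preserved by continuity, so $\hat{f}\in\Hom_{b,k}(\hat{\mathcal{A}},\mathbb{T})$.

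The main technical point to be careful with is the verification of the hyperaddition axiom: because $\oplus$ is multi-valued precisely at equal arguments, a naive limit argument only gives an inequality and one has to use the explicit description of the hyperaddition on $\mathbb{T}$ to translate that inequality into membership in the correct set. Once this case analysis is in place, everything else is a standard density/continuity argument of the type used to extend seminorms from a normed algebra to its completion.
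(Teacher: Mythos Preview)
Your overall strategy is the standard one and is exactly what the paper is invoking when it says ``it is well known that any bounded multiplicative seminorm uniquely extends to its completion''; the paper gives no details beyond that sentence, so your write-up is more informative. The power-trick reduction from bounded to non-expansive, the restriction/extension maps, and the case analysis for the hyperaddition are all correct.

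There is, however, a genuine slip in the Cauchy step. The inequality you use,
\[
|f(x_n)-f(x_m)| \le f(x_n-x_m),
\]
is false in the additive notation of $\mathbb{T}$: take $f(x_n)=-50$, $f(x_m)=-100$; then $f(x_n-x_m)=\max\{f(x_n),f(x_m)\}=-50$, while the left side is $50$. What the ultrametric law actually gives (when $f(x_n)\neq f(x_m)$) is $f(x_n-x_m)=\max\{f(x_n),f(x_m)\}$, not a bound on the difference. You have transported the \emph{multiplicative} reverse triangle inequality $\bigl|\,|a|_x-|b|_x\,\bigr|\le|a-b|_x$ verbatim into additive notation, where it no longer holds. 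As written, your bound would force $|f(x_n)-f(x_m)|\le|x_n-x_m|_{\mathcal{A}}\to -\infty$, which is absurd for a nonnegative quantity.

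The fix is short: from $f(x_n-x_m)\le|x_n-x_m|_{\mathcal{A}}\to-\infty$ and the identity above, one sees that for $n,m$ large either $f(x_n)=f(x_m)$ or $\max\{f(x_n),f(x_m)\}$ is very negative. A two-line dichotomy then shows $(f(x_n))$ is either eventually constant in $\mathbb{R}$ or tends to $-\infty$, so the limit exists in $\mathbb{T}$ and is independent of the approximating sequence. With this correction your argument goes through; the remaining verifications (multiplicativity, the hyperaddition check, preservation of $C_f$) are fine as you wrote them.
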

\begin{proof}
Note that $f \in \Hom_{b,k}(\mathcal{A},\mathbb{T})$ is nothing but a bounded multiplicative seminorm on $\mathcal{A}$ extending a valuation on $k$ and it is well known that any bounded multiplicative seminorm uniquely extends to a bounded multiplicative seminorm on its completion $\hat{\mathcal{A}}$. 
\end{proof}

For a normed ring $A$, we let $\Hom_b(A,\mathbb{T})$ be the set of bounded homomorphisms from $A$ to $\mathbb{T}$ as in \eqref{bounded}.

Recall that a semivaluation on a commutative ring $A$ assumes the same axioms as a valuation except that a semivaluation allows a nontrivial kernel (this is a multiplicative seminorm in the terminology of Berkovich in \cite{berkovich2012spectral}). 

\begin{lem}\label{valuationmorphism}
Let $(\Gamma,+)$ be a totally ordered abelian group and $\Gamma_{hyp}$ be the associated hyperfield (as in Example \ref{mainexample}). Let $A$ be a commutative ring. Then a semivaluation on $A$, with the value group $\Gamma$, is equivalent to a hyperring homomorphism from $A$ to $\Gamma_{hyp}$. In particular, a real semivaluation on $A$ is the same thing as a hyperring homomorphism from $A$ to $\mathbb{T}$. 
\end{lem}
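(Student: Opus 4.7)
The plan is to directly compare the axioms defining a $\Gamma$-valued semivaluation on $A$ with those defining a hyperring homomorphism $f\colon A\to\Gamma_{hyp}$, and observe that everything reduces to a single case analysis on the hyperaddition $\oplus$. First I would write out both sides in additive notation. A $\Gamma$-valued semivaluation is a map $v\colon A\to\Gamma\cup\{-\infty\}$ with $v(0)=-\infty$, $v(1)=0$, $v(ab)=v(a)+v(b)$, and the non-Archimedean inequality $v(a+b)\leq\max\{v(a),v(b)\}$. A hyperring homomorphism $f\colon A\to\Gamma_{hyp}$ must satisfy $f(0)=-\infty$, $f(1)=0$, $f(ab)=f(a)\odot f(b)=f(a)+f(b)$, and $f(a+b)\in f(a)\oplus f(b)$. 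The first three conditions are literally the same, so the entire content of the lemma is the equivalence between the last two conditions.

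Next I would translate the condition $f(a+b)\in f(a)\oplus f(b)$ using the definition of $\oplus$ in $\Gamma_{hyp}$ given in Example \ref{mainexample}: when $f(a)\neq f(b)$, this becomes $f(a+b)=\max\{f(a),f(b)\}$; when $f(a)=f(b)$, it becomes $f(a+b)\leq f(a)$. In either case this implies $f(a+b)\leq\max\{f(a),f(b)\}$, which gives the forward direction: every hyperring homomorphism $A\to\Gamma_{hyp}$ is a semivaluation.

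For the converse, given a semivaluation $v$, I need to promote the inequality $v(a+b)\leq\max\{v(a),v(b)\}$ to the precise hyperaddition membership. The case $v(a)=v(b)$ is immediate since $v(a)\oplus v(b)=[-\infty,v(a)]$. The case $v(a)\neq v(b)$ is the one non-formal step, and will be the main obstacle: I must show that the inequality is actually an equality when $v(a)\neq v(b)$. This is the standard reverse ultrametric argument: assuming without loss of generality $v(b)<v(a)$, if we had $v(a+b)<v(a)$, then applying the ultrametric inequality to $a=(a+b)+(-b)$ (noting $v(-b)=v(b)$, which follows from multiplicativity and $v(-1)=0$ since $(-1)^2=1$) would yield $v(a)\leq\max\{v(a+b),v(b)\}<v(a)$, a contradiction. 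Hence $v(a+b)=\max\{v(a),v(b)\}$, which is exactly the required membership in $v(a)\oplus v(b)$.

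Finally, I would note that the two constructions (viewing a semivaluation as a hyperring homomorphism and vice versa) are literally inverse to each other as they coincide on underlying set-maps, yielding the desired bijection. The specialization $\Gamma=\mathbb{R}$ recovers $\Gamma_{hyp}=\mathbb{T}$ and gives the second assertion, thereby justifying the remark of Baker and Bowler alluded to at the start of the section. The only subtlety beyond bookkeeping is the reverse-inequality step above; everything else is an unpacking of definitions.
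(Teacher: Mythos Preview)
Your proposal is correct and follows the same approach as the paper, namely a direct comparison of the two definitions. The paper's own proof is a single sentence asserting that the definitions coincide; your write-up is strictly more detailed, in particular by spelling out the reverse-ultrametric step (that $v(a)\neq v(b)$ forces $v(a+b)=\max\{v(a),v(b)\}$), which the paper leaves implicit.
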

\begin{proof}
The definition of a homomorphism from a commutative ring $A$ to $\Gamma_{hyp}$ is precisely the definition of a semivaluation with the value group $\Gamma$. 
\end{proof}


Next, we prove that the Berkovich analytification $X^{\textrm{an}}$ of a scheme $X$ of finite type over $k$ is homeomorphic to $X(\mathbb{T})$ equipped with the fine topology.

\begin{lem}\label{valuationashomomorphism}
Let $A$ be a commutative ring and $\mathbb{T}$ be the tropical hyperfield. Then giving a hyperring morphism from $A$ to $\mathbb{T}$ is equivalent to giving a prime ideal $\mathfrak{p}$ of $A$ and a real valuation on the residue field $k(\mathfrak{p})$ at $\mathfrak{p}$. 
\end{lem}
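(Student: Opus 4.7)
The plan is to use Lemma \ref{valuationmorphism} to convert the problem into the classical language of semivaluations, and then to invoke the standard fact that a valuation on an integral domain extends uniquely to its fraction field.

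First I would handle the forward direction. Given a hyperring homomorphism $\varphi : A \to \mathbb{T}$, Lemma \ref{valuationmorphism} identifies $\varphi$ with a real semivaluation on $A$. Set $\mathfrak{p} := \ker \varphi = \{a \in A : \varphi(a) = -\infty\}$. Composing $\varphi$ with the unique homomorphism $\pi : \mathbb{T} \to \mathbb{K}$ yields a homomorphism $\pi \circ \varphi : A \to \mathbb{K}$ whose kernel is precisely $\mathfrak{p}$, so $\mathfrak{p}$ is a prime ideal of $A$ by Definition \ref{primeideal}(2). Since $\varphi$ vanishes exactly on $\mathfrak{p}$, it factors through a homomorphism $\bar{\varphi} : A/\mathfrak{p} \to \mathbb{T}$ with trivial kernel; that is, $\bar\varphi$ is a genuine valuation on the domain $A/\mathfrak{p}$. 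The standard extension formula $a/b \mapsto \bar\varphi(a) - \bar\varphi(b)$ (additive notation on $\mathbb{T}$) produces a real valuation $\tilde{\varphi}$ on $k(\mathfrak{p}) = \mathrm{Frac}(A/\mathfrak{p})$, which is well-defined and unique by the classical argument.

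For the converse, given a prime ideal $\mathfrak{p}$ of $A$ and a real valuation $\nu$ on $k(\mathfrak{p})$, I would consider the composite
\[
A \twoheadrightarrow A/\mathfrak{p} \hookrightarrow k(\mathfrak{p}) \xrightarrow{\nu} \mathbb{T},
\]
which is a real semivaluation on $A$, hence a hyperring homomorphism $A \to \mathbb{T}$ by Lemma \ref{valuationmorphism}.

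Finally I would check that the two constructions are mutually inverse. Starting from $\varphi$, passing to $(\mathfrak{p}, \tilde{\varphi})$ and composing back recovers $\varphi$ by construction. Starting from $(\mathfrak{p}, \nu)$, the composite map has kernel exactly $\mathfrak{p}$ (since $\nu$ vanishes only at $0 \in k(\mathfrak{p})$), and the induced valuation on $k(\mathfrak{p})$ agrees with $\nu$ on the image of $A/\mathfrak{p}$, hence everywhere by uniqueness of the extension to the fraction field. No serious obstruction is expected: the algebraic content is already packaged in Lemma \ref{valuationmorphism}, and what remains is the routine translation between semivaluations on $A$ and pairs (prime ideal, valuation on the residue field), which is classical.
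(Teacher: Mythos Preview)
Your proposal is correct and follows essentially the same route as the paper's proof: take $\mathfrak{p}=\ker\varphi$, factor $\varphi$ through $A/\mathfrak{p}$, extend to $k(\mathfrak{p})$ via Lemma~\ref{valuationmorphism}, and run the composite $A\to A/\mathfrak{p}\to k(\mathfrak{p})\to\mathbb{T}$ for the converse. The only differences are cosmetic --- you justify primality of $\mathfrak{p}$ via the map $\mathbb{T}\to\mathbb{K}$ and write out the fraction-field extension formula and the inverse check explicitly, whereas the paper leaves these as ``one can easily see/check''.
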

\begin{proof}
Let $\varphi:A \to \mathbb{T}$ be a homomorphism of hyperrings. One can easily see that $\mathfrak{p}:=\ker (\varphi)$ is a prime ideal of $A$. Furthermore, $\varphi$ factors through $A/\mathfrak{p}$ and induces a hyperring homomorphism $\bar{\varphi}:A/\mathfrak{p} \to \mathbb{T}$. This, in turn, induces a hyperring homomorphism $\Frac(A/\mathfrak{p})=k(\mathfrak{p}) \to \mathbb{T}$ which is a real valuation on $k(\mathfrak{p})$ by Lemma \ref{valuationmorphism}.

Conversely, suppose that we have a prime ideal $\mathfrak{q}$ and a hyperring homomorphism $f:\Frac(A/\mathfrak{q}) \to \mathbb{T}$. One can easily check that this can be lifted to define a hyperring homomorphism $\hat{f}:A \to \mathbb{T}$ such that $\ker(\hat{f})=\mathfrak{q}$. 
\end{proof}

Let $k$ be a valued field. From Lemma \ref{valuationmorphism}, this is equivalent to a field $k$ with a fixed a hyperring homomorphism $\nu:k \to \mathbb{T}$ such that $\ker(\nu)=\{0\}$.

\begin{lem}\label{affinelem}
Let $k$ be a field with a valuation $\nu:k \to \mathbb{T}$. Let $A$ be a commutative $k$-algebra. Then a semivaluation on $A$ which extends $\nu$ is the same thing as a hyperring homomorphism $f:A \to \mathbb{T}$ such that $f|_k=\nu$. 
\end{lem}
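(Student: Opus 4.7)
The plan is to deduce the lemma almost immediately from Lemma \ref{valuationmorphism}, with only a small bookkeeping step to translate the compatibility ``extends $\nu$'' on the semivaluation side into the compatibility ``$f|_k=\nu$'' on the hyperring side.

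First, I would invoke Lemma \ref{valuationmorphism}: a real semivaluation $v$ on the commutative ring $A$ is the same thing as a hyperring homomorphism $f\colon A \to \mathbb{T}$, via the identification $f=v$ pointwise. The axioms of a multiplicative seminorm (in additive notation: $v(xy)=v(x)+v(y)$, $v(x+y)\le\max\{v(x),v(y)\}$ with equality when $v(x)\ne v(y)$, plus $v(0)=-\infty$) are literally the defining conditions for a hyperring homomorphism $A\to\mathbb{T}$ once one unpacks the hyperaddition $\oplus$ on $\mathbb{T}$ from Example \ref{mainexample}. In particular, the containment $f(x+y)\in f(x)\oplus f(y)$ becomes the ultrametric inequality together with the sharpness when $f(x)\ne f(y)$.

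Next, I would show that under this bijection, the condition ``$v$ extends $\nu$'' on semivaluations corresponds exactly to ``$f|_k=\nu$'' on hyperring homomorphisms. This is tautological: since the structural map $k\to A$ is an inclusion of $k$-algebras (or rather its image is a $k$-subalgebra which we identify with $k$), saying that the semivaluation $v$ on $A$ agrees with $\nu$ when restricted to $k$ is the same as saying that the associated hyperring homomorphism $f$ satisfies $f(a)=\nu(a)$ for all $a\in k$. Both sides are therefore parameterized by the same data, and the correspondence from Lemma \ref{valuationmorphism} restricts to a bijection between the two subsets singled out in the statement.

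I do not anticipate a main obstacle here; the lemma is essentially a ``with $k$-structure'' refinement of Lemma \ref{valuationmorphism}, and the only thing to verify is that the $k$-compatibility translates cleanly under the identification of semivaluations with hyperring maps to $\mathbb{T}$. The only tiny subtlety is to make sure that the kernel of $f$ intersects $k$ trivially (so that $f|_k$ really is a valuation in the strict sense, matching the hypothesis on $\nu$): but since $\nu\colon k\to\mathbb{T}$ already has trivial kernel as a field valuation, requiring $f|_k=\nu$ forces this automatically.
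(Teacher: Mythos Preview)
Your proposal is correct and follows essentially the same route as the paper: the paper's proof is simply ``This is straightforward,'' which amounts to invoking Lemma~\ref{valuationmorphism} and noting that the extra $k$-compatibility condition translates verbatim. Your writeup just makes this explicit.
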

\begin{proof}
This is straightforward. 
\end{proof}

Let $k$ be a valued field with a valuation $\nu:k\to \mathbb{T}$ and $A$ be a commutative $k$-algebra. We define the following set:
\[
\Hom_k(A,\mathbb{T}):=\{f \in \Hom(A,\mathbb{T}) \mid f|_k=\nu\}.
\]
Then we have the following.

\begin{pro}\label{affinecase}
Let $X=\Spec A$ be an affine scheme of finite type over a field $k$ with a valuation $\nu$. Then we have the following bijection of sets:
\begin{equation}\label{affinehomeo}
X^{an}=X(\mathbb{T}) (:=\Hom_k(A,\mathbb{T})). 
\end{equation}
Furthermore, the bijection \eqref{affinehomeo} is a homeomorphism when $X(\mathbb{T})$ is equipped with the fine topology. 
\end{pro}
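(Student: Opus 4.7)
The proof proposal has two parallel parts: matching the underlying sets and then matching the topologies.

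For the bijection of sets, I would chain together the algebraic identifications that have already been set up in the paper. First, by Proposition \ref{equivaelnceofcategories}, since $X=\Spec A$ is affine, we have
\[
X(\mathbb{T}) = \Hom_{\mathfrak{Lhs}}(\Spec\mathbb{T},X) = \Hom_k(A,\mathbb{T}),
\]
where on the right we take those hyperring homomorphisms whose restriction to $k$ agrees with the fixed structural map $\nu\colon k\to\mathbb{T}$. Next, by Lemma \ref{affinelem}, the set $\Hom_k(A,\mathbb{T})$ is in canonical bijection with the set of semivaluations (i.e., real multiplicative seminorms) on $A$ extending $\nu$. But by Berkovich's definition, the underlying set of $X^{an}$ for $X=\Spec A$ of finite type over $k$ is precisely this set of multiplicative seminorms on $A$ extending $\nu$ (here the use of the additive convention via $\mathbb{T}=\mathbb{R}\cup\{-\infty\}$ instead of Berkovich's multiplicative $\mathbb{R}_{\geq 0}$ is a cosmetic change). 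Composing these identifications gives the set-theoretic bijection $X^{an}=X(\mathbb{T})$.

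For the homeomorphism, I would appeal to Proposition \ref{affinetopology}, which says that on an affine scheme the fine Zariski topology on $X(\mathbb{T})$ coincides with the affine topology, namely the subspace topology inherited from the product topology on $\prod_{a\in A}\mathbb{T}^{(a)}$. Unwinding the definition, the affine topology is characterized as the coarsest topology on $\Hom_k(A,\mathbb{T})$ such that for every $a\in A$ the evaluation map
\[
ev_a\colon \Hom_k(A,\mathbb{T})\longrightarrow \mathbb{T},\qquad f\mapsto f(a),
\]
is continuous (with $\mathbb{T}$ carrying its Euclidean topology). On the other hand, the Berkovich topology on $X^{an}$ is by definition the coarsest topology making the functions $|-|_x\mapsto |a|_x$ continuous for each $a\in A$. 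Under the identification above, these two families of evaluation maps are literally the same family, so the two topologies coincide by the universal property of coarsest-topology constructions.

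The main step to carry out carefully is the dictionary between the $\mathbb{T}$-valued hyperring homomorphism $f\colon A\to\mathbb{T}$ and Berkovich's multiplicative seminorm: one must confirm, in our additive conventions, that the axioms $f(ab)=f(a)+f(b)$, $f(a+b)\in f(a)\oplus f(b)$, and $f|_k=\nu$ are exactly the axioms for a real semivaluation on $A$ extending $\nu$. This was essentially done already in Lemma \ref{valuationmorphism} and Lemma \ref{affinelem}, so I would simply invoke those results rather than redo the verification. Once the dictionary is in place, the topological half is immediate from Proposition \ref{affinetopology}, and no genuine obstacle remains: the proposition follows by stringing the cited results together.
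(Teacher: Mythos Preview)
Your proposal is correct and follows essentially the same route as the paper: identify $X(\mathbb{T})$ with $\Hom_k(A,\mathbb{T})$ via Proposition \ref{equivaelnceofcategories}, invoke Lemma \ref{affinelem} to match this with the set of semivaluations extending $\nu$ (hence with $X^{an}$), and then use Proposition \ref{affinetopology} together with the definition of the Berkovich topology to conclude the homeomorphism. The only difference is that you spell out the coarsest-topology characterization a bit more explicitly than the paper does.
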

\begin{proof}
By definition, $X^{an}$ is the set of multiplicative seminorms (or semivaluations in our terminology) on $A$ which extends $\nu$. Therefore, we have $X^{an}=\Hom_k(A,\mathbb{T})$ (as sets) from Lemma \ref{affinelem}. But, it follows from Proposition \ref{equivaelnceofcategories} that $\Hom_k(A,\mathbb{T})=\Hom_k(\Spec \mathbb{T}, X)=X(\mathbb{T})$, where $\nu:k\to \mathbb{T}$ is a fixed structural morphism. All it remains to show is that such a set bijection is a homeomorphism. But, this directly follows from Proposition \ref{affinetopology} and the definition of topology on $X^{an}$. 
\end{proof}


Let $X$ be a scheme of finite type over a complete non-Archimedean valued field $(k,\nu)$. Recall that the points of Berkovich analytification $X^{an}$ are in one-to-one correspondence with the set of equivalence classes of morphisms $\Spec L \to X$ for all valued extensions $L$ of $k$ such that two morphisms $\Spec L \to X$ and $\Spec L' \to X$ are equivalent if and only if there exists a valued extension $L''$ for both $L$ and $L'$ and a morphism $\Spec L'' \to X$ such that  the following diagram commutes:
\[
\begin{tikzcd}
\Spec L
\arrow{rd}[swap]{}
&\Spec L'' \arrow{d}{}
\arrow{l}
\arrow{r}
&\Spec L' \arrow{ld}{}
\\
&X
\end{tikzcd}
\]

The set of points of $X^{an}$ is also in one-to-one correspondence with the set of triples $(x,k(x),\mu)$, where $x$ is a point in $X$, $k(x)$ is the residue field at $x$, and $\mu$ is a valuation on $k(x)$ which extends $\nu$. With these interpretations, we have the following:

\begin{pro}\label{Berkovichsetbijection}
Let $X$ be a scheme of finite type over a complete non-Archimedean valued field $(k,\nu)$. Then there is a bijection (of sets) as follows:
\begin{equation}\label{setbijection}
X^{an}=X(\mathbb{T}).
\end{equation}
\end{pro}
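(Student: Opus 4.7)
The plan is to combine two descriptions that have already been established earlier in the paper, so that the bijection falls out by inspection. Just before the statement, the author recalls that the underlying set of $X^{an}$ is in one-to-one correspondence with the set of triples $(x, k(x), \mu)$, where $x\in X$ and $\mu$ is a valuation on the residue field $k(x)$ extending $\nu$. On the other hand, Corollary \ref{analytification}, together with the definition $X(\mathbb{T})=\Hom_k(\Spec \mathbb{T},X)$, asserts that $X(\mathbb{T})$ is in bijection with pairs $(x,\tilde{\nu})$ consisting of a point $x\in X$ and a valued extension $\tilde{\nu}:k(x)\to \mathbb{T}$ of $\nu$. These two parametrizations are literally the same data, so the bijection of the proposition will follow at once.

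To make the map explicit in one direction, I would start with a class $[g:\Spec L\to X]\in X^{an}$, where $L$ is a valued extension of $k$. The morphism $g$ determines a point $x\in X$ (the image of the unique point of $\Spec L$) together with an inclusion $k(x)\hookrightarrow L$; restricting the valuation of $L$ to $k(x)$ produces a real valuation $\mu:k(x)\to\mathbb{T}$ extending $\nu$. By Corollary \ref{analytification}, the pair $(x,\mu)$ corresponds to a unique morphism $\Spec\mathbb{T}\to X$ of locally hyperringed spaces over $k$, i.e., a unique element of $X(\mathbb{T})$. Conversely, given $(x,\mu)\in X(\mathbb{T})$, I would take the completion $L$ of $k(x)$ with respect to $\mu$ and form the composition $\Spec L\to \Spec k(x)\to X$; Proposition \ref{extension them} guarantees that the valuation on $k(x)$ extends uniquely and canonically to $L$, so the output is a well-defined representative of a class in $X^{an}$.

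The one point that requires a (small) verification is that this assignment is well-defined on equivalence classes and gives mutually inverse maps. Two morphisms $\Spec L\to X$ and $\Spec L'\to X$ lying in the same class must factor through a common valued extension $\Spec L''\to X$; chasing the definitions, this forces both to pick out the same image point $x\in X$ and, after restriction, the same valuation on $k(x)$. Conversely, two representatives obtained from the same pair $(x,\mu)$ are dominated by the common valued extension coming from the completion of $k(x)$ at $\mu$. This makes the two maps inverse to one another, giving the bijection \eqref{setbijection}.

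The main (and essentially only) conceptual point is the translation of the Berkovich data into the hyperring-homomorphism data; thanks to Lemma \ref{valuationmorphism}, Lemma \ref{valuationashomomorphism} and Corollary \ref{analytification}, this translation has already been set up, so the proposition is really a bookkeeping statement rather than a substantive calculation. I would therefore present the proof in essentially the two-line form: quote the Berkovich description of $X^{an}$ in terms of triples $(x,k(x),\mu)$, quote Corollary \ref{analytification} for the corresponding description of $X(\mathbb{T})$, and conclude that the two sets coincide.
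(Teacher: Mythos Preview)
Your proposal is correct and follows essentially the same approach as the paper: quote Corollary \ref{analytification} to identify $X(\mathbb{T})$ with triples $(x,k(x),\tilde{\nu})$, and match this with the description of $X^{an}$ recalled just before the proposition. The paper's proof is in fact the two-line version you describe at the end; your additional paragraphs making the maps explicit and checking well-definedness on equivalence classes are sound but more detailed than what the paper provides.
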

\begin{proof}
It follows from Corollary \ref{analytification} that there is a one-to-one correspondence between the points of $X(\mathbb{T})=\Hom_k(\Spec\mathbb{T},X)$ and triples $(x,k(x),\tilde{\nu})$, where $x \in X$, $k(x)$ is the residue field at $x$, and $\tilde{\nu}$ is a homomorphism from $k(x)$ to $\mathbb{T}$ extending $\nu:k \to \mathbb{T}$. This is, in turn, in one-to-one correspondence with the points of $X^{\textrm{an}}$ as we explained above. 
\end{proof}

\begin{cor}\label{analytificationhomeo}
Let $X$ be a scheme of finite type over a field $k$ with a complete non-Archimedean valuation $\nu:k \to \mathbb{T}$. Then the analytification $X^{\textrm{an}}$ is homeomorphic to $X(\mathbb{T})$ which is equipped with the fine topology.
\end{cor}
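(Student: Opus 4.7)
The plan is to deduce the global statement from the affine case (Proposition \ref{affinecase}) by gluing along an affine open cover, exactly in parallel with the proof of Proposition \ref{SPECrepresentable} for the Krasner hyperfield. The set-level bijection $\Xan = X(\mathbb{T})$ is already established in Proposition \ref{Berkovichsetbijection}; the remaining task is to check that this bijection is a homeomorphism when $X(\mathbb{T})$ carries fine Zariski topology and $\Xan$ carries its usual Berkovich topology.

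First I would fix an affine open cover $\{U_i\}_{i\in I}$ of $X$, with $U_i = \Spec A_i$. On the hyperfield side, Proposition \ref{openembedding} applies because $\mathbb{T}\setminus\{0_{\mathbb{T}}\} = \mathbb{T}\setminus\{-\infty\} = \mathbb{R}$ is open in the Euclidean topology on $\mathbb{T}$; hence each $U_i(\mathbb{T})$ is open in $X(\mathbb{T})$ and the collection $\{U_i(\mathbb{T})\}$ is an open cover. On the Berkovich side, it is standard that $U_i^{\mathrm{an}}$ is an open subspace of $\Xan$ and $\{U_i^{\mathrm{an}}\}$ is an open cover (this is how $\Xan$ is constructed by gluing). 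I would then verify that the bijection of Proposition \ref{Berkovichsetbijection} carries $U_i^{\mathrm{an}}$ onto $U_i(\mathbb{T})$: under the triple description $(x, k(x), \tilde\nu)$, a point lies in $U_i^{\mathrm{an}}$ iff $x \in U_i$ iff the associated morphism $\Spec\mathbb{T} \to X$ factors through $U_i$, which is by definition the condition for lying in $U_i(\mathbb{T})$.

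Next, Proposition \ref{affinecase} provides, for each $i$, a homeomorphism $U_i^{\mathrm{an}} \xrightarrow{\sim} U_i(\mathbb{T})$ matching our global bijection. Since a bijection of topological spaces that restricts to homeomorphisms on the members of an open cover (on both sides, with matching covers) is itself a homeomorphism, we conclude that $\Xan \cong X(\mathbb{T})$. To make the gluing rigorous it suffices to note that a subset $V \subseteq X(\mathbb{T})$ is open iff each $V \cap U_i(\mathbb{T})$ is open in $U_i(\mathbb{T})$: the direction $(\Rightarrow)$ is immediate from Proposition \ref{inducedlemma}, while $(\Leftarrow)$ follows from the definition of fine Zariski topology as the finest topology for which all morphisms from affine schemes induce continuous maps on rational points, together with the fact that any morphism $Y \to X$ from an affine $Y$ factors locally through the $U_i$'s. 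The analogous statement on the Berkovich side holds by construction.

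The only potentially subtle point is this compatibility of fine Zariski topology with the affine open cover, but it is immediate from the setup in \S\ref{finetopology} and mirrors the argument already used in the proofs of Propositions \ref{SPECrepresentable} and \ref{openembedding}. Functoriality of the homeomorphism in $X$ (which gives representability of the analytification functor $\mathcal{A}$ by $\mathbb{T}$) then follows from Proposition \ref{inducedlemma} applied to any $k$-morphism $f:Y\to X$ of finite-type $k$-schemes together with the corresponding functoriality of Berkovich analytification.
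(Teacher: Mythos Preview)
Your proposal is correct and follows essentially the same approach as the paper's own proof: use the set bijection from Proposition~\ref{Berkovichsetbijection}, observe that $\mathbb{T}\setminus\{-\infty\}$ is open so Proposition~\ref{openembedding} applies, and thereby reduce to the affine case handled by Proposition~\ref{affinecase}. The paper's argument is simply terser about the gluing step, and your final paragraph on functoriality belongs to the next corollary (Corollary~\ref{representable}) rather than to this one.
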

\begin{proof}
We claim that the set-bijection in Proposition \ref{Berkovichsetbijection} is a homeomorphism. Let $i$ be the set-bijection in Proposition \ref{Berkovichsetbijection}. Since $\mathbb{T}$ is a topological hyperfield, we can apply Proposition \ref{openembedding} in this case and hence our proposition is reduced to the affine case. The result now follows from Proposition \ref{affinecase}.
\end{proof}

\begin{cor}\label{representable}
Let $k$ be a complete non-Archimedean valued field. 
Let $\mathcal{A}$ be the functor from the category of schemes of finite type over $k$ to the category of topological spaces sending $X$ to the underlying topological space $|X^{\textrm{an}}|$ of the Berkovich analytificaiton $X^{\textrm{an}}$. Then $\mathcal{A}$ is isomorphic to the functor $\Hom(\Spec \mathbb{T},-)$.
\end{cor}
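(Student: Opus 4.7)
The plan is to upgrade the pointwise homeomorphism of Corollary \ref{analytificationhomeo} to a natural isomorphism of functors, so that $\mathcal{A} \simeq \Hom_k(\Spec\mathbb{T},-)$ as functors from $\mathfrak{Schm}_{k,fin}$ to $\mathfrak{Top}$. First I would assemble the pieces we already have: for every scheme $X$ of finite type over $k$, Corollary \ref{analytificationhomeo} supplies a homeomorphism $\eta_X : X^{an} \to X(\mathbb{T})$, and Proposition \ref{inducedlemma} guarantees that any $k$-morphism $f : Y \to X$ induces a continuous map $f(\mathbb{T}) : Y(\mathbb{T}) \to X(\mathbb{T})$ with respect to fine Zariski topology. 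So the only thing left to verify is that the family $\{\eta_X\}$ is natural in $X$, i.e.\ that for every $f : Y \to X$ the square
\[
\begin{tikzcd}
Y^{an} \arrow[r,"f^{an}"] \arrow[d,"\eta_Y"'] & X^{an} \arrow[d,"\eta_X"] \\
Y(\mathbb{T}) \arrow[r,"f(\mathbb{T})"'] & X(\mathbb{T})
\end{tikzcd}
\]
commutes.

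Next I would unravel both composites on a typical point. A point of $Y^{an}$ is a triple $(y, k(y), \mu)$ with $y \in Y$, $\mu : k(y) \to \mathbb{T}$ a valuation extending $\nu$. Its image under $f^{an}$ is obtained by setting $x = f(y)$, using the induced residue-field extension $f^\sharp : k(x) \hookrightarrow k(y)$, and pulling back $\mu$ to get the triple $(x, k(x), \mu \circ f^\sharp)$; this is the standard description of analytification on points. Under $\eta_X$ this corresponds, via Corollary \ref{analytification} and Theorem \ref{mainlemma}, to the unique morphism $\Spec\mathbb{T} \to X$ determined by the point $x$ and the hyperring homomorphism $\mu \circ f^\sharp : k(x) \to \mathbb{T}$. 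On the other hand, $\eta_Y$ sends $(y, k(y), \mu)$ to the morphism $g : \Spec\mathbb{T} \to Y$ determined by $(y, \mu)$, and $f(\mathbb{T})(g) = f \circ g$. By the very construction of $g$ in the proof of Theorem \ref{mainlemma}, the composition $f \circ g$ has underlying point $f(y) = x$ and residue-field data precisely $\mu \circ f^\sharp$. Hence both composites coincide, which establishes naturality.

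Finally, I would combine naturality with the pointwise homeomorphism to conclude. The assignment $X \mapsto X(\mathbb{T}) = \Hom_k(\Spec\mathbb{T}, X)$ is, by definition, the functor represented by $\Spec\mathbb{T}$ in $\mathfrak{Lhs}_k$, with continuity of the induced maps guaranteed by Proposition \ref{inducedlemma}; the natural transformation $\eta : \mathcal{A} \Rightarrow (-)(\mathbb{T})$ has each component a homeomorphism by Corollary \ref{analytificationhomeo}. Therefore $\eta$ is a natural isomorphism of functors $\mathfrak{Schm}_{k,fin} \to \mathfrak{Top}$, and $\mathcal{A}$ is representable by $\mathbb{T}$.

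I do not expect a serious obstacle: all the analytic and topological content is absorbed into Corollary \ref{analytificationhomeo}, and the only genuine verification is the naturality square above. The subtlest point is making sure the identification between the Berkovich description of $f^{an}$ (pullback of semivaluations along $f^\sharp$) and the locally hyperringed space description (post-composition with $f$) really agree on residue-field data; this is exactly what Theorem \ref{mainlemma} and Proposition \ref{stalklemma} were designed to handle, so the commutativity is essentially automatic once the bijections are spelled out.
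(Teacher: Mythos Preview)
Your proposal is correct and follows essentially the same approach as the paper: invoke the homeomorphism of Corollary \ref{analytificationhomeo}, the functoriality of $X \mapsto X(\mathbb{T})$ via post-composition with $f$, and the continuity of $f(\mathbb{T})$ from Proposition \ref{inducedlemma}. The only difference is that the paper's proof is a two-sentence sketch leaving the naturality of the identification $\Xan = X(\mathbb{T})$ implicit, whereas you spell out the commutativity of the square explicitly by tracing a triple $(y,k(y),\mu)$ through both composites; this is a welcome elaboration rather than a different method.
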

\begin{proof}
For the notational convenience, we let $\mathcal{G}:=\Hom(\Spec \mathbb{T},-)$. For each scheme $X$ of finite type over $k$, we let $\eta_X:\mathcal{A}(X)=|X^{\textrm{an}}| \to \mathcal{G}(X)=X(\mathbb{T})$ be the homeomorphism in Corollary \ref{analytificationhomeo}. Then, for each $\mathfrak{p} \in |X^{\textrm{an}}|$, we have $\eta_X(\mathfrak{p}):\Spec \mathbb{T} \to X$ corresponding to a triple $(x,k(x),\tilde{\nu})$.

Now, let $f:X \to Y$ be a morphism of schemes of finite type over $k$. Then, $\mathcal{G}(f)(\eta_X(\mathfrak{p}))$ corresponds to the triple $(f(x),k(f(x)),\nu')$, where $\nu'$ is obtained by composing $k(f(x)) \to k(x)$ and $\tilde{\nu}$. In particular, one can easily see that the following diagram commutes:
\[
\begin{tikzcd}[row sep=large, column sep=1.5cm]
\mathcal{F}(X)\arrow{r}{\eta_X}\arrow{d}[swap]{\mathcal{F}(f)}
& \mathcal{G}(X) \arrow{d}{\mathcal{G}(f)} \\
\mathcal{F}(Y) \arrow{r}[swap]{\eta_Y} 
& \mathcal{G}(Y)
\end{tikzcd}
\]
This proves that the functors $\mathcal{F}$ and $\mathcal{G}$ are isomorphic. 	
\end{proof}

\begin{rmk}
Let $(\Gamma,+)$ be a totally ordered abelian group, $\Gamma_{hyp}$ be the associated hyperfield, and $A$ be a commutative ring. A homomorphism $f:A \to \Gamma_{hyp}$ is the same thing as a prime ideal $\mathfrak{p} \in \Spec A$ and a homomorphism $\tilde{f}:A/\mathfrak{p} \to \Gamma_{hyp}$. The later data determines the Hahn analytification of $X=\Spec A$ as in \cite{foster2015hahn}, provided that $\Gamma_{hyp}$ is equipped with proper topology. This hints at that the analytification with a higher rank valuation case can be treated in the same way as Berkovich analytification, however, we do not pursue this case in this paper. 
\end{rmk}
\vspace{0.1cm}
\subsection{Hyperstructures of analytic groups}\label{hyperstructures of analytic groups}
In this section, we interpret several basic definitions and results in \cite[\S 5]{berkovich2012spectral} in terms of hyperstructures. To this end, we will mostly focus on the affine case. In what follows, we let $k$ be a complete non-Archimedean valued field.
\subsubsection{Hyperstructure of $G^{\textrm{an}}$}
 Let $G$ be a group scheme of finite type over $k$. The analytification $G^{\textrm{an}}$ of $G$ is a group object in the category of $k$-analytic spaces (see, \cite[\S 5]{berkovich2012spectral}). 
 
Let $p_i$ be the projections of $G^{\textrm{an}}\times_kG^{\textrm{an}}$ to the $i$ th factor for $i=1,2$ and $m:G^{\textrm{an}} \times_k G^{\textrm{an}} \to G^{\textrm{an}}$ be the multiplication of $G^{\textrm{an}}$. In general, the underlying space $|G^{\textrm{an}}|$ of $G^{\textrm{an}}$ is not a group itself, however, Berkovich introduces a `group-like' operation on $|G^{\textrm{an}}|$ as follows:

\begin{mydef}(\cite[\S 5]{berkovich2012spectral})\label{berkovichhypergroup}
Let $G$ be a group scheme of finite type over a field $k$ and $G^{\textrm{an}}$ be the analytification of $G$. One imposes a hyperoperation $\odot$ on $|G^{\textrm{an}}|$ as follows: for $g,h \in |G^{\textrm{an}}|$, 
\[
g\odot h:=\{f \in G^{\textrm{an}} \mid \exists w \in G^{\textrm{an}} \times_k G^{\textrm{an}}\textrm{ such that } p_1(w)=g,\textrm{ }p_2(w)=h,\textrm{ and } m(w)=f \}.
\]
\end{mydef}

Since $G^{\textrm{an}}$ is a group object, we have the inversion $i:G^{\textrm{an}} \to G^{\textrm{an}}$. For each $g \in G^{\textrm{an}}$, we let $g^{-1}:=i(g)$. Then we have the following. 

\begin{lem}\label{reverselem}
Let $(G^{\textrm{an}},\odot)$ be as above. If $x \in y\odot z$, then $x^{-1} \in z^{-1}\odot y^{-1}$. 
\end{lem}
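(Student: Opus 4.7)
The plan is to exploit the fact that $G^{\mathrm{an}}$ inherits the structure of a group object from $G$, and in particular the scheme-theoretic identity $(yz)^{-1} = z^{-1}y^{-1}$ analytifies to a genuine morphism identity in the category of $k$-analytic spaces. This will let me transport a witness $w$ for $x \in y \odot z$ to a witness $w'$ for $x^{-1} \in z^{-1} \odot y^{-1}$ by combining the swap of factors with the inversion on each factor.

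Concretely, I would proceed as follows. First, unpacking Definition \ref{berkovichhypergroup}, the hypothesis $x \in y \odot z$ produces a point $w \in G^{\mathrm{an}} \times_k G^{\mathrm{an}}$ satisfying $p_1(w) = y$, $p_2(w) = z$, and $m(w) = x$. Let $\tau : G^{\mathrm{an}} \times_k G^{\mathrm{an}} \to G^{\mathrm{an}} \times_k G^{\mathrm{an}}$ be the flip exchanging the two factors, and let $i : G^{\mathrm{an}} \to G^{\mathrm{an}}$ be the inversion of the analytic group $G^{\mathrm{an}}$. Define
\[
w' := (i \times i)\bigl(\tau(w)\bigr) \in G^{\mathrm{an}} \times_k G^{\mathrm{an}}.
\]
A direct computation using the defining properties of $\tau$ and the projections gives $p_1(w') = i(p_2(w)) = i(z) = z^{-1}$ and $p_2(w') = i(p_1(w)) = i(y) = y^{-1}$, which handles two of the three conditions needed to conclude $x^{-1} \in z^{-1} \odot y^{-1}$.

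The remaining identity to verify is $m(w') = x^{-1}$. This amounts to the equality of morphisms
\[
m \circ (i \times i) \circ \tau = i \circ m
\]
on $G^{\mathrm{an}} \times_k G^{\mathrm{an}}$. For the group scheme $G$ itself this is precisely the standard group-scheme consequence of the inversion and multiplication axioms (expressing $(yz)^{-1} = z^{-1}y^{-1}$), and since analytification is a functor that sends the group-scheme structure of $G$ to the analytic group structure of $G^{\mathrm{an}}$ (\cite[\S 5]{berkovich2012spectral}), the identity persists after analytification. Evaluating both sides at $w$ gives $m(w') = i(m(w)) = i(x) = x^{-1}$, completing the argument.

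I do not anticipate any substantial obstacle: once one recognises that the hyperoperation $\odot$ is defined purely in terms of the analytified group operations $m$, $i$, and the projections, the reversibility is just the group-theoretic identity $(yz)^{-1} = z^{-1}y^{-1}$ pulled back along the analytic morphism that packages $m \circ (i \times i) \circ \tau$. The only care needed is in correctly tracking the order of the factors after applying $\tau$, which explains why the outcome is $z^{-1} \odot y^{-1}$ rather than $y^{-1} \odot z^{-1}$.
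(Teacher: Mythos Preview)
Your proof is correct and follows essentially the same route as the paper: both construct the witness $w' = (i \times i)\circ \tau(w)$ (the paper writes $\sigma$ for your $\tau$) and appeal to the analytified group identity $i \circ m = m \circ (i \times i) \circ \tau$ to conclude $m(w') = x^{-1}$.
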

\begin{proof}
If $x \in y \odot z$, then there exists $w \in G^{\textrm{an}}\times_k G^{\textrm{an}}$ such that $p_1(w)=y$, $p_2(w)=z$, and $m(w)=x$. Let $i:G^{\textrm{an}} \to G^{\textrm{an}}$ be the inversion and $\sigma: G^{\textrm{an}} \times_k G^{\textrm{an}} \to G^{\textrm{an}} \times_k G^{\textrm{an}}$ be the switch morphism. Let $w':=(i\times_k i)\circ \sigma(w)$. Then clearly one can see that $p_1(w')=z^{-1}$, $p_2(w')=y^{-1}$, and $m(w')=x^{-1}$ since $i \circ m=m\circ (i\times_k i)\circ \sigma$. 
\end{proof}

In \cite{berkovich2012spectral}, Berkovich actually proves the following proposition in our terminology. 

\begin{pro}
Let $G$ be a group scheme of finite type over $k$. Then $(|G^{\textrm{an}}|,\odot)$ is a hypergroup.
\end{pro}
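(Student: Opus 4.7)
The plan is to verify the four hypergroup axioms (unique identity, unique inverse, reversibility, and the implicit associativity of $\odot$) by translating each into a statement about the morphisms $m$, $e\colon \Spec k \to G$, and $i\colon G\to G$ satisfying the group scheme axioms, then interpreting them on the Berkovich analytification. The only substantive analytic input will be the universal property of analytic fiber products, together with the fact that a $k$-rational point has a particularly simple fiber in such a product.

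For the identity, take $e \in |\Gan|$ to be the image of the $k$-rational section $e\colon \Spec k \to G$ under analytification. The fiber $p_1^{-1}(e) \subseteq \Gan \times_k \Gan$ is the base change $\Spec k \times_k \Gan$, canonically isomorphic to $\Gan$ via $p_2$. The group axiom $m\circ(e\times_k \mathrm{id}) = \mathrm{id}$ then forces the restriction of $m$ to this fiber to coincide with $p_2$, so $e\odot g = \{g\}$ and, symmetrically, $g\odot e = \{g\}$ for every $g$. Uniqueness is the usual one-line argument: a second identity $e'$ gives $\{e'\} = e'\odot e = \{e\}$. Associativity rests on the identification
\[
(\Gan\times_k \Gan)\times_{\Gan,\, p_1,\, m}(\Gan\times_k \Gan)\;\cong\; \Gan\times_k \Gan\times_k \Gan,
\]
a direct consequence of the universal property of analytic fiber products. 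Unwinding the definition of $\odot$, both $(g\odot h)\odot k$ and $g\odot(h\odot k)$ equal the image $\tau\bigl(q_1^{-1}(g)\cap q_2^{-1}(h)\cap q_3^{-1}(k)\bigr)$, where $q_i$ are the projections from the triple product and $\tau = m\circ(m\times_k \mathrm{id}) = m\circ(\mathrm{id}\times_k m)$.

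For inversion, set $g^{-1} := i(g)$; the defining diagram $m\circ(\mathrm{id},i)\circ\Delta = e\circ\pi$ evaluated at $g$ produces a witness in $\Gan\times_k \Gan$ for $e \in g\odot g^{-1}$, and symmetrically $e\in g^{-1}\odot g$. Reversibility is handled by the two analytic morphisms
\[
\phi := (m,\, i\circ p_2),\qquad \psi := (i\circ p_1,\, m)\;\colon\; \Gan\times_k \Gan\longrightarrow \Gan\times_k \Gan.
\]
A direct calculation using the group axioms (associativity of $m$ and the inversion identity) gives $m\circ\phi = p_1$ and $m\circ\psi = p_2$, so any witness $w$ for $x \in y\odot z$ yields, via $\phi(w)$ and $\psi(w)$, witnesses for $y \in x\odot z^{-1}$ and $z\in y^{-1}\odot x$ (this subsumes Lemma~\ref{reverselem}). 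Uniqueness of the inverse now drops out: from $e\in g\odot h$, reversibility and the identity axiom give $g \in e\odot h^{-1} = \{h^{-1}\}$, whence $h = g^{-1}$ since $i$ is an involution.

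The main obstacle is the lifting statement hidden in the associativity argument: given a compatible pair $(w_1, w_2)\in \Gan\times_k \Gan$ with $m(w_1) = p_1(w_2)$, one must produce a point of the triple fiber product $\Gan\times_k\Gan\times_k\Gan$ lying over both of them. This is precisely the surjectivity at the level of underlying topological spaces of the fiber-product isomorphism displayed above, and although standard in Berkovich's theory, it is the one step whose content is genuinely analytic rather than purely formal; everything else reduces to diagram chasing on $G$.
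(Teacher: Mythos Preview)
Your argument is correct, but it follows a different route from the paper's. The paper's proof is essentially a reduction to \cite[Proposition~5.1.1]{berkovich2012spectral}: Berkovich already establishes associativity of $\odot$, the existence of an identity, and the one-sided reversibility ``$y\in g\odot x \Rightarrow x\in g^{-1}\odot y$''. The paper then only supplies the three missing pieces---uniqueness of $e$, uniqueness of the inverse, and the other half of reversibility---the last of these via Lemma~\ref{reverselem} combined with Berkovich's one-sided version.

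By contrast, you re-derive everything from scratch. Your identity and reversibility arguments via the morphisms $\phi=(m,\,i\circ p_2)$ and $\psi=(i\circ p_1,\,m)$ are exactly the diagram-chasing content behind Berkovich's Proposition~5.1.1, and your associativity argument is likewise what Berkovich does: identify the relevant analytic fiber product with the triple product and use surjectivity of $|X\times_Z Y|\to |X|\times_{|Z|}|Y|$ for $k$-analytic spaces. So what you have written is, in effect, a proof of the cited Berkovich proposition together with the paper's short addenda. This is more self-contained than the paper's version---a reader without \cite{berkovich2012spectral} in hand can follow it---at the cost of length, and with the analytic surjectivity statement (which you correctly flag as the one genuinely non-formal input) still taken on faith.

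Two minor presentational points: you use $k$ both for the ground field and for a point of $\Gan$ in the associativity paragraph, which is jarring; and the crucial lifting step is asserted in the associativity paragraph as if immediate, with the acknowledgment of its analytic content deferred to the final paragraph---it would read more honestly to flag the issue where it first occurs.
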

\begin{proof}
It is proven in \cite[Proposition 5.1.1]{berkovich2012spectral} that $\odot$ is associative and there exists $e \in G^{\textrm{an}}$ such that $e\odot x=x\odot e=x$. Furthermore, it is also proven in \cite{berkovich2012spectral} that if $y \in g\odot x$ then $x \in g^{-1}\odot y$ (reversible condition). Therefore, we only have to show the following:
\begin{enumerate}
\item 
$e$ is the unique identity.
\item
For each $f \in G^{\textrm{an}}$, $f^{-1}$ is the unique inverse. 
\item
If $x \in y\odot z$ then $y \in x\odot z^{-1}$. 
\end{enumerate}
One can clearly see that $e$ is the unique identity element since if we have $e' \in G^{\textrm{an}}$ such that $e'\odot x=x$ for all $x \in G^{\textrm{an}}$, then we should have $e=e'\odot e=e'$. The uniqueness of inverses follows from the reversible condition: if $e \in g\odot h$, then $h \in g^{-1}\odot e=g^{-1}$. This implies that $h=g^{-1}$. Finally, if $x \in y \odot z$, then it follows from Lemma \ref{reverselem} that $x^{-1} \in z^{-1}\odot y^{-1}$. From \cite[Proposition 5.1.1]{berkovich2012spectral}, we have $y^{-1} \in z \odot x^{-1}$ and we derive $y \in x\odot z^{-1}$ by applying Lemma \ref{reverselem} again. 
\end{proof}

\begin{pro}
Let $G$ be a group scheme of finite type over $k$ and $H$ be a closed analytic subgroup of $G^{\textrm{an}}$. Then, for any $x,y \in H$, $x\odot y \subseteq H$. In particular, with the induced hyperoperation, $(H,\odot)$ is a sub-hypergroup of $(G^{\textrm{an}},\odot)$. 
\end{pro}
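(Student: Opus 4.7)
The plan is to exploit the fact that a closed analytic subgroup $H \subseteq \Gan$ is, by definition, both a closed analytic subspace and stable under the group operations, so that the fiber product $H \times_k H$ sits inside $\Gan \times_k \Gan$ in a controlled way and the restriction of the multiplication morphism $m$ lands in $H$.

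First, I would unwind Definition \ref{berkovichhypergroup}: fix $x, y \in H$ and an arbitrary $f \in x \odot y$, and produce a witness $w \in \Gan \times_k \Gan$ with $p_1(w) = x$, $p_2(w) = y$, and $m(w) = f$. The task is then to prove that $f \in H$. The key geometric input is that since $H \hookrightarrow \Gan$ is a closed immersion of $k$-analytic spaces, the induced map $H \times_k H \hookrightarrow \Gan \times_k \Gan$ is also a closed immersion, and at the level of underlying topological spaces one has the set-theoretic identification
\[
|H \times_k H| \;=\; p_1^{-1}(|H|) \,\cap\, p_2^{-1}(|H|).
\]
Granting this, the conditions $p_1(w) = x \in H$ and $p_2(w) = y \in H$ force $w$ to lie in $H \times_k H$.

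Next, I would use that $H$ is a subgroup: the multiplication morphism $m : \Gan \times_k \Gan \to \Gan$ restricts to a morphism $m|_{H \times_k H} : H \times_k H \to H$, because this is precisely what it means for $H$ to be closed under the group law. Applying this to our $w$ yields $f = m(w) \in H$, and since $f \in x \odot y$ was arbitrary, we conclude $x \odot y \subseteq H$. For the ``in particular'' clause, closure under the induced hyperoperation is exactly what we just proved; the identity $e$ of $\Gan$ belongs to $H$ and the inversion morphism $i$ restricts to $H$ (both because $H$ is a subgroup), so the reversibility and uniqueness of identity and inverses descend to $(H, \odot)$ from the hypergroup axioms already verified on $\Gan$.

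The only non-formal step is the set-theoretic identification of $|H \times_k H|$ with the intersection of preimages, which is the main technical obstacle. This is a standard property of closed immersions in Berkovich's category and can be extracted from \cite{berkovich2012spectral}: a closed analytic subspace is topologically embedded, and for fiber products over $k$ the projection of underlying spaces to the product is surjective in the sense needed so that a point $w$ with both projections in $|H|$ actually comes from a point of $|H \times_k H|$. Once this is in hand, the rest of the argument is purely functorial, mirroring the way a subgroup of a group scheme inherits its group structure via the universal property of the fiber product.
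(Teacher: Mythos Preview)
Your argument is correct and is precisely the unpacking of what the paper dismisses in one line as ``clear from the definition.'' The paper gives no details at all; you have supplied exactly the mechanism that makes the statement work, namely that a witness $w$ for $f \in x \odot y$ with both projections in $H$ must already lie in $|H \times_k H|$, whence $f = m(w)$ lands in $H$ because $m$ restricts to $H \times_k H \to H$.

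Regarding the step you flag as the only non-formal one: the identification $|H \times_k H| = p_1^{-1}(|H|) \cap p_2^{-1}(|H|)$ for a closed analytic subspace $H \subseteq \Gan$ is indeed available. Locally, if $H$ is cut out by an ideal $I$, then $H \times_k H$ is cut out by the ideal generated by $j_1(I)$ and $j_2(I)$ inside the (completed) tensor product; a point $w$, viewed as a multiplicative seminorm, satisfies $p_i(w) \in H$ exactly when $w$ vanishes on $j_i(I)$, so both conditions together force $w$ to vanish on the ideal of $H \times_k H$. This justifies the set-theoretic identity you need and removes the residual uncertainty in your last paragraph. With that in hand, your proof is complete and matches the paper's intended (if unstated) reasoning.
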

\begin{proof}
This is clear from the definition. 
\end{proof}

Let $G$ be a group scheme of finite type over $k$. Then we have the following canonical inclusions (of sets):
\begin{equation}\label{inclusion}
G(k) \xhookrightarrow{i} G \xhookrightarrow{j} G^{\textrm{an}}. 
\end{equation}

Berkovich's hyperoperation generalizes the classical group structure in the following sense.

\begin{pro}
Let $G$ be a group scheme of finite type over $k$ and let $G^{\textrm{an}}$ be the analytification of $G$. 
Let $i:G(k) \xhookrightarrow{} G$ and $j:G \xhookrightarrow{} G^{\textrm{an}}$ be the inclusions as in \eqref{inclusion}. Then for any $a,b \in G(k)$, we have 
\[
j(i(a*b)) \in j(i(a)) \odot j(i(b)),
\]
where $*$ is the group operation of $G(k)$. 
\end{pro}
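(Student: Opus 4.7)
The plan is to exhibit an explicit witness $w \in \Gan \times_k \Gan$ satisfying the three conditions in Definition \ref{berkovichhypergroup}, namely $p_1(w)=j(i(a))$, $p_2(w)=j(i(b))$, and $m(w)=j(i(a*b))$. Since $a, b \in G(k)$ are rational points, the pair $(a,b)$ determines a canonical $k$-rational point $\omega$ of the fiber product $G \times_k G$ via the universal property. Let $w$ be the image of $\omega$ under the analytification map $G \times_k G \hookrightarrow (G \times_k G)^{\mathrm{an}} = \Gan \times_k \Gan$ (here one uses that analytification commutes with fiber products for schemes of finite type over $k$).

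Next, I would verify the three conditions. The projections $p_i : \Gan \times_k \Gan \to \Gan$ are precisely the analytifications of the algebraic projections $G \times_k G \to G$, and since analytification is functorial and compatible with the inclusion of $k$-points, we get $p_1(w) = j(i(a))$ and $p_2(w) = j(i(b))$ immediately from the definition of $\omega$ via the universal property of the fiber product. Similarly, the multiplication morphism $m : \Gan \times_k \Gan \to \Gan$ is the analytification of the algebraic multiplication $\mu : G \times_k G \to G$, and on $k$-rational points $\mu$ restricts to the group operation $*$ on $G(k)$. Hence $m(w) = j(\mu(\omega)) = j(i(a*b))$.

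Combining these three observations, $w$ witnesses the membership $j(i(a*b)) \in j(i(a)) \odot j(i(b))$, which is exactly what is claimed. The only step requiring any care is the compatibility of analytification with fiber products and with the inclusion of $k$-points — but this is a standard functoriality property of the analytification functor for schemes of finite type over a complete non-Archimedean valued field (see \cite{berkovich2012spectral}), so no real obstacle arises. In essence, the proof is the tautological statement that on $k$-points the hyperoperation $\odot$ on $\Gan$ contains the honest group law of $G(k)$.
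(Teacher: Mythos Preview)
Your proposal is correct and follows essentially the same approach as the paper. The paper's proof is a one-line remark that $a*b \in m(p^{-1}(a,b))$ where $p=(p_1,p_2)$; you simply spell out explicitly what the witness $w \in p^{-1}(a,b)$ is (namely the image of the $k$-rational point $(a,b)$ under the analytification of $G\times_k G$) and justify the three required equalities via functoriality of analytification.
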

\begin{proof}
This is clear as $a*b \in m(p^{-1}(a,b))$ for all $a,b\in G(k)$, where $p=(p_1,p_2)$. 
\end{proof}

\subsubsection{Berkovich's hyperstructure versus Connes and Consani's hyperstructure}\label{ccsection}
Let $G=\Spec A$ be an affine group scheme of finite type over $k$. In this case, we may use the identification $|G^{\textrm{an}}|=\Hom_k(A,\mathbb{T})$ to provide an algebraic definition of Berkovich's hyperoperation which is defined quite geometrically. We will also compare Berkovich's hyperoperation with the hyperoperation introduced by Connes and Consani in \cite{con4}. Let's first recall Connes and Consani's hyperstructure. 

\begin{mydef}\label{cchyper}
Let $A$ be a Hopf algebra over a field $k$ and $\Delta$ be the coproduct of $A$. By identifying, $X=\Spec A=\Hom_k(A,\mathbb{K})$, one imposes the following hyperoperation $\boxdot$ on $X$: for $f,g \in \Hom(A,\mathbb{K})$, 
\begin{equation}\label{hopgcc}
f \boxdot g:=\{h \in \Hom(A,\mathbb{K}) \mid h(a) \in \sum f(a_{(1)})g(a_{(2)}) \textrm{ for all } \Delta(a)=\sum a_{(1)}\otimes a_{(2)}\}.
\end{equation}
\end{mydef}

\begin{rmk}
The hyperoperation \eqref{hopgcc} makes sense for any hyperfield $H$ and $\Hom_k(A,H)$. We will consider later in this section the case when $H=\mathbb{T}$. 
\end{rmk}

In \cite{con4}, Connes and Consani compute the hyperoperation as in Definition \ref{cchyper} explicitly for an affine line and an algebraic torus. To be more precise, Connes and Consani prove the following:

\begin{mythm}(\cite{con4})\label{ccaffinetorus}
Let $X=\Spec k[T]$ be the affine line over $k=\mathbb{Q}$ and $\delta$ be the generic point of $X$. Let $G_H:=X -\{\delta\}$. Then $(G_H,\boxdot)$, where the hyperoperation $\boxdot$ as in Definition \ref{cchyper}, is a hypergroup. More precisely, we have the following isomorphism of hypergroups:
\[
G_H \simeq \overline{k}/\Aut_k(\overline{k}),  
\]
where $\overline{k}$ is considered as an additive group. One has a similar result for an algebraic torus $X=\Spec k[T,\frac{1}{T}]$ with $\overline{k}^\times$ as a multiplicative group and $k=\mathbb{F}_p$, the field with $p$ elements. 
\end{mythm}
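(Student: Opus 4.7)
The plan is to split the proof into a set-theoretic identification and a compatibility check for two hyperoperations. For the first part, I would invoke Corollary \ref{krasner} to identify $X(\mathbb{K})$ with the underlying set of $X = \Spec k[T]$. Under this identification the generic point $\delta$ is the zero ideal, so $H = X - \{\delta\}$ consists exactly of the closed points, i.e., maximal ideals $(p(T))$ for irreducible $p \in k[T]$. Since $k = \mathbb{Q}$ is perfect, closed points correspond bijectively to Galois orbits of algebraic elements in $\overline{k}$, yielding a set-bijection $H \longleftrightarrow \overline{k}/G$ with $G = \Aut_k(\overline{k})$ via $[\alpha] \leftrightarrow f_\alpha$, where $\Ker f_\alpha$ is the ideal of polynomials vanishing at $\alpha$.

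Next, I would equip $\overline{k}/G$ with the natural quotient hyperaddition $[\alpha] \boxplus [\beta] := \{[\sigma \alpha + \tau \beta] : \sigma, \tau \in G\}$, inherited from the additive structure on $\overline{k}$ together with the $G$-action; this is parallel to the quotient-hyperring recipe recalled earlier in the paper, and the hypergroup axioms transfer directly from the group axioms in $(\overline{k}, +)$ together with $G$-equivariance of $+$. The main task is then to verify that under the above bijection $\boxplus$ coincides with Connes--Consani's operation $\boxdot$ from Definition \ref{cchyper}, computed with the additive Hopf structure $\Delta(T) = T \otimes 1 + 1 \otimes T$, so that $\Delta(a)(T_1, T_2) = a(T_1 + T_2)$.

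The easy inclusion $[\alpha + \beta] \in f_\alpha \boxdot f_\beta$ should follow from the identity $a(\alpha + \beta) = \sum_i b_i(\alpha) c_i(\beta)$ in $\overline{k}$, valid for any decomposition $\Delta(a) = \sum_i b_i \otimes c_i$, combined with a case analysis on how many products $f_\alpha(b_i) f_\beta(c_i)$ fail to vanish in $\mathbb{K}$; this shows that $f_{\alpha + \beta}(a)$ always lies in the $\mathbb{K}$-hypersum. The hard direction, which I expect to be the main obstacle, is the reverse: if $f_\gamma \in f_\alpha \boxdot f_\beta$, then $\gamma$ must be a $G$-conjugate of some $\sigma \alpha + \tau \beta$. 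The difficulty is that the $\mathbb{K}$-hypersum is very coarse --- once two non-vanishing summands appear it becomes $\{0, 1\}$ and imposes no constraint --- so naive tests with $a = m_\gamma$ carry essentially no information. My remedy is to test against the polynomial $P(T) := \prod_{s \in G\alpha + G\beta}(T - s)$, which has coefficients in $k$ by $G$-invariance of its (finite) root set, and to exploit the ``for all decompositions $\Delta(a) = \sum a_{(1)} \otimes a_{(2)}$'' quantifier by combining $P$ with carefully chosen auxiliary polynomials so as to force $P(\gamma) = 0$, hence $\gamma \in G\alpha + G\beta$.

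Finally, the torus case $X = \Spec k[T, T^{-1}]$ over $k = \mathbb{F}_p$ runs along the same lines with the multiplicative Hopf structure $\Delta(T) = T \otimes T$ and multiplicative $G$-orbits in $\overline{k}^\times$; the finiteness of $\mathbb{F}_p$ simplifies the Galois combinatorics (every orbit is finite and given by Frobenius conjugacy) but does not alter the structure of the argument.
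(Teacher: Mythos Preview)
The paper does not give its own proof of this theorem: it is stated with the attribution \cite{con4} and no proof follows in the text. The result is quoted from Connes and Consani solely to motivate the subsequent Theorem~\ref{mythmhyp} and the comparison in \S\ref{ccsection} between their hyperoperation and Berkovich's. There is therefore nothing in the present paper against which to compare your proposal.

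That said, a remark on the proposal itself: the set-theoretic identification and the easy inclusion $[\alpha+\beta]\in f_\alpha\boxdot f_\beta$ are fine, but the hard direction is not yet a proof. Testing against $P(T)=\prod_{s\in G\alpha+G\beta}(T-s)$ is the natural move; however, the phrase ``combining $P$ with carefully chosen auxiliary polynomials so as to force $P(\gamma)=0$'' is exactly where the content lies, and you have not said what decomposition of $\Delta(P)=P(T_1+T_2)$ in $k[T_1]\otimes_k k[T_2]$ you intend to use, nor why at most one summand survives under $f_\alpha\otimes f_\beta$. Until that is supplied the reverse inclusion remains open in your sketch; the details are in \cite{con4}.
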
 

Inspired by Theorem \ref{ccaffinetorus}, in \cite{jun2016hyperstructures}, the author proves the following theorem. 

\begin{mythm}\label{mythmhyp}
Let $X=\Spec A$ be an affine group scheme of finite type over a field $k$. The hyperoperation $\boxdot$ on $X=\Hom_k(A,\mathbb{K})$ always satisfies the following properties:
\begin{enumerate}
\item 
$\exists !$ $e \in X$ such that $e\boxdot a=a\boxdot e$ for all $a \in X$. 
\item 
For each $a \in X$, there exists a canonical element $a^{-1} \in X$ (not necessarily unique) such that $e \in (a\boxdot a^{-1}) \bigcap (a^{-1} \boxdot a)$. 
\item
For $a,b,c \in X$, we have $((a\boxdot b) \boxdot c) \bigcap (a\boxdot (b \boxdot c)) \neq \emptyset$. 
\item
For $a,b,c \in X$, $a \in b \boxdot c$ if and only if $a^{-1} \in c^{-1} \boxdot b^{-1}$. 
\end{enumerate}
\end{mythm}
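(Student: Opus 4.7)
The plan is to exploit the Hopf algebra structure on the commutative algebra $A$: write $\Delta:A\to A\otimes_k A$, $\varepsilon:A\to k$, $S:A\to A$ for the coproduct, counit, and antipode, and $\pi:k\to\mathbb{K}$ for the canonical map. Two canonical elements of $X=\Hom_k(A,\mathbb{K})$ emerge: the candidate identity $e:=\pi\circ\varepsilon$, and, for each $a\in X$, the canonical inverse $a^{-1}:=a\circ S$. Commutativity of $A$ forces $S^2=\mathrm{id}$, so $a\mapsto a^{-1}$ is an involution. Throughout, the key tool is the containment $\varphi(\sum y_i)\in\sum\varphi(y_i)$ for any hyperring homomorphism $\varphi:A\to\mathbb{K}$, which is the only thing one can say in general for hypersum-valued images of ring sums.

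For (1): pushing the counit axioms $\sum\varepsilon(x_{(1)})x_{(2)}=x=\sum x_{(1)}\varepsilon(x_{(2)})$ through $a$, together with $\pi\circ\varepsilon=e$, yields $a(x)\in\sum e(x_{(1)})a(x_{(2)})$ and $a(x)\in\sum a(x_{(1)})e(x_{(2)})$ for every Sweedler decomposition of $\Delta(x)$, hence $a\in e\boxdot a$ and $a\in a\boxdot e$; uniqueness of $e$ is pinned down by playing any competitor $e'$ against $e$ in both slots of $\boxdot$ and using the identities above. For (2), applying $a$ to the antipode axioms $\sum S(x_{(1)})x_{(2)}=\varepsilon(x)\cdot 1_A=\sum x_{(1)}S(x_{(2)})$ and recognizing $\pi(\varepsilon(x))=e(x)$ gives $e(x)\in\sum a^{-1}(x_{(1)})a(x_{(2)})$ and $e(x)\in\sum a(x_{(1)})a^{-1}(x_{(2)})$, so $e\in(a\boxdot a^{-1})\cap(a^{-1}\boxdot a)$.

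For (3), coassociativity $(\Delta\otimes\mathrm{id})\Delta=(\mathrm{id}\otimes\Delta)\Delta$ delivers a common triple Sweedler decomposition $\sum x_{(1)}\otimes x_{(2)}\otimes x_{(3)}$; one exhibits an $h\in X$ whose value at each $x$ lies in the iterated hypersum $\sum a(x_{(1)})b(x_{(2)})c(x_{(3)})$, and by associativity of hyperaddition in $\mathbb{K}$ this $h$ lies simultaneously in $(a\boxdot b)\boxdot c$ and $a\boxdot(b\boxdot c)$, so the intersection is nonempty. For (4), use the Hopf identity $\Delta\circ S=(S\otimes S)\circ\tau\circ\Delta$, with $\tau$ the swap. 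Given $a\in b\boxdot c$, evaluating the defining condition at $y=S(x)$ gives $a^{-1}(x)=a(S(x))\in\sum b(S(x_{(2)}))c(S(x_{(1)}))=\sum c^{-1}(x_{(1)})b^{-1}(x_{(2)})$, the last step by commutativity of $\mathbb{K}$; hence $a^{-1}\in c^{-1}\boxdot b^{-1}$. The converse follows by applying the same argument to $(a^{-1},c^{-1},b^{-1})$ and invoking $S^2=\mathrm{id}$.

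The principal obstacle is the loss of information in $\varphi(\sum y_i)\in\sum\varphi(y_i)$: each Sweedler manipulation yields only set-memberships, never equalities, so the full hypergroup axioms ($e\boxdot a=\{a\}$, uniqueness of the inverse, strict associativity) cannot be recovered — this is precisely why $X$ is only an ``almost'' hypergroup and why the four weaker conditions above are exactly what can be extracted. Additional care is needed to ensure the memberships hold across \emph{every} Sweedler representation of $\Delta(x)$, not merely a convenient one; since $\Delta$ need not be cocommutative, the order of Sweedler factors in (1) and (4) must be tracked explicitly, and the uniqueness in (1) requires a separate argument using the counit identity rather than a standard hypergroup cancellation.
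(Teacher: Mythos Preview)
First, note that the paper does not supply its own proof of this theorem: it is quoted from the author's earlier work \cite{jun2016hyperstructures}, so there is no in-paper argument to compare against. Your Hopf-algebraic approach---setting $e=\pi\circ\varepsilon$ and $a^{-1}=a\circ S$ and pushing the counit and antipode identities through the containment $\varphi(\sum y_i)\in\sum\varphi(y_i)$---is the natural one, and it handles (2) and (4) correctly.

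The genuine gap is in (3). You assert that exhibiting an $h$ with $h(x)\in\sum a(x_{(1)})b(x_{(2)})c(x_{(3)})$ for the common triple Sweedler decomposition places $h$ simultaneously in $(a\boxdot b)\boxdot c$ and $a\boxdot(b\boxdot c)$. This implication runs the wrong way. Membership $h\in(a\boxdot b)\boxdot c$ demands an \emph{intermediate} homomorphism $d\in a\boxdot b$ with $h\in d\boxdot c$; the triple-hypersum condition on $h$ alone gives no mechanism to produce such a $d$ as a bona fide element of $\Hom_k(A,\mathbb{K})$. (Conversely, any $h$ in either iterated product \emph{does} satisfy your triple condition, so your set is a superset of both sides, not a witness to their intersection.) What is actually needed is a geometric input: choose a prime of $A\otimes_k A\otimes_k A$ lying over $(\mathfrak{p}_a,\mathfrak{p}_b,\mathfrak{p}_c)$---one exists because $k(\mathfrak{p}_a)\otimes_k k(\mathfrak{p}_b)\otimes_k k(\mathfrak{p}_c)\neq 0$---and read off both $h$ and the required intermediaries $d$, $d'$ from this single point via the two factorizations of $\Delta^{(2)}=(\Delta\otimes\mathrm{id})\circ\Delta=(\mathrm{id}\otimes\Delta)\circ\Delta$. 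This is precisely the $\star$-operation viewpoint of Definition~\ref{BerkovichhypHopf} transported to $\mathbb{K}$, together with the inclusion $\star\subseteq\boxdot$ of Proposition~\ref{refinement}. A smaller issue: your uniqueness argument in (1) tacitly assumes $e\boxdot a=\{a\}$, but you have only established $a\in e\boxdot a$; the standard ``$e=e'\boxdot e=e'$'' trick does not go through from mere containment, so uniqueness needs its own justification.
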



Now, we consider the identification $|G^{\textrm{an}}|=\Hom_k(A,\mathbb{T})$, where $G=\Spec A$ is an affine group scheme of finite type over $k$. In other words, $A$ is a finitely generated Hopf algebra over $k$. In the remaining part of the subsection, we describe Berkovich's hyperoperation by means of the Hopf algebra structure of $A$. To this end, we first recall the following fact: Let $X=\Spec A$ and $Y=\Spec B$ be affine schemes of finite type over $k$. Then the product $X^{\textrm{an}} \times_k Y^{\textrm{an}}$ exists in the category of $k$-analytic spaces and in fact one has
\begin{equation}\label{fiberproduct}
X^{\textrm{an}} \times_k Y^{\textrm{an}}=(\Spec A\hat{\otimes}_k B)^{\textrm{an}}, 
\end{equation}
where $A\hat{\otimes}_k B$ is the completed tensor product of $A$ and $B$ over $k$ (see, \cite[\S 1]{berkovich2012spectral} or \cite[Appendix B]{bosch2014lectures} for the definition of complete tensor products). Since taking a fibered product commutes with the analytification, one may also write \eqref{fiberproduct} as follows:
\begin{equation}\label{fiber2}
X^{\textrm{an}} \times_k Y^{\textrm{an}}=(\Spec A\hat{\otimes}_k B)^{\textrm{an}}=(\Spec A \otimes_k B)^{\textrm{an}}. 
\end{equation}

\begin{rmk}
In the affine case, the fact that taking a fibered product commutes with the analytification directly follows from Proposition \ref{extension them}. 
\end{rmk}

Let $j_1:A \to A\otimes_k A$ be the homomorphism generated by sending $a$ to $a\otimes 1$ and $j_2:A \to A\otimes_k A$ be a homomorphism generated by sending $a$ to $1\otimes a$. The projection maps $p_i:|G^{\textrm{an}}\times_k G^{\textrm{an}}| \to |G^{\textrm{an}}|$ for $i=1,2$ can be rewritten as follows:
\begin{equation}\label{projection}
p_i:\Hom_k(A\otimes_kA, \mathbb{T}) \to \Hom_k(A,\mathbb{T}),\quad \nu \mapsto \nu\circ j_i, \quad \textrm{for }i=1,2. 
\end{equation}
Let $\Delta: A \to A\otimes_k A$ be the coproduct of $A$. Then the induced multiplication $m:|G^{\textrm{an}} \times_k G^{\textrm{an}}| \to G^{\textrm{an}}$ can be rephrased as follows:
\begin{equation}\label{delta}
m:\Hom_k(A\otimes_kA, \mathbb{T})  \to \Hom_k(A,\mathbb{T}), \quad \nu \to \nu\circ \Delta, 
\end{equation}

We define a hyperoperation $\star$ on $G^{\textrm{an}}=\Hom_k(A,\mathbb{T})$ as follows. 

\begin{mydef}\label{BerkovichhypHopf}
Let $A$ be a finitely generated Hopf algebra over $k$. For $g,h \in \Hom_k(A,\mathbb{T})$, we define the following set:
\[
g\star h:=\{f \in \Hom_k(A,\mathbb{T})\mid \exists \beta_f \in \Hom_k(A\otimes_k A,\mathbb{T})\textrm{ such that } \beta_f\circ j_1=g\textrm{ }, \beta_f\circ j_2=h, \textrm{ and } f=\beta_f\circ \Delta\}.
\]
\end{mydef} 


\begin{pro} \label{HopfBerk}
Let $G=\Spec A$ be an affine group scheme of finite type over $k$ and $G^{\textrm{an}}$ be the analytification of $G$. Then, under identification of $G^{\textrm{an}}=\Hom_k(A,\mathbb{T})$, the hyperoperation defined by Berkovich agrees with the hyperoperation in Definition \ref{BerkovichhypHopf}.  
\end{pro}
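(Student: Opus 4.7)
The plan is to observe that the proposition is essentially a translation: once we rewrite $G^{\mathrm{an}}$, $G^{\mathrm{an}} \times_k G^{\mathrm{an}}$, and the structural morphisms $p_1, p_2, m$ in terms of $\Hom_k(-, \mathbb{T})$, Berkovich's definition of $\odot$ and the Hopf-algebraic definition of $\star$ become syntactically the same condition. So the proof is a matter of carefully assembling the identifications that have already been set up in the paper rather than producing new input.

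First, I would record the identifications we already have at our disposal. By Proposition~\ref{affinecase} (or Corollary~\ref{analytificationhomeo}) we have a natural bijection $G^{\mathrm{an}} = \Hom_k(A,\mathbb{T})$. By \eqref{fiber2}, together with another application of Proposition~\ref{affinecase} to $\Spec(A \otimes_k A)$, we have $|G^{\mathrm{an}} \times_k G^{\mathrm{an}}| = \Hom_k(A \otimes_k A, \mathbb{T})$. Finally, equations \eqref{projection} and \eqref{delta} tell us that under these identifications the projections $p_i$ become the pullback maps $\beta \mapsto \beta \circ j_i$ and the multiplication $m$ becomes $\beta \mapsto \beta \circ \Delta$.

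Second, I would simply unwind Berkovich's definition. For $g, h \in \Hom_k(A, \mathbb{T})$, the set $g \odot h$ consists of those $f \in G^{\mathrm{an}}$ for which there exists $w \in |G^{\mathrm{an}} \times_k G^{\mathrm{an}}|$ with $p_1(w) = g$, $p_2(w) = h$, and $m(w) = f$. Translating $w$ into a homomorphism $\beta_f \in \Hom_k(A \otimes_k A, \mathbb{T})$ via the identifications above, these three conditions become $\beta_f \circ j_1 = g$, $\beta_f \circ j_2 = h$, and $\beta_f \circ \Delta = f$. This is exactly the membership condition for $f \in g \star h$ in Definition~\ref{BerkovichhypHopf}, so $g \odot h = g \star h$ as subsets of $\Hom_k(A, \mathbb{T})$.

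The only genuine content hidden in the argument is the compatibility between the analytic fiber product and the complete tensor product of Hopf algebras, but this is exactly what equation \eqref{fiber2} provides, and we also have the remark after it noting that in the affine case this compatibility follows from Proposition~\ref{extension them}. Thus there is no real obstacle; the main thing to be careful about is making sure that the identification $|G^{\mathrm{an}} \times_k G^{\mathrm{an}}| = \Hom_k(A \otimes_k A, \mathbb{T})$ is set up so that the pullback maps $j_1, j_2, \Delta$ genuinely compute $p_1, p_2, m$ on points, which is already recorded in \eqref{projection} and \eqref{delta}.
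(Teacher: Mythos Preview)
Your proposal is correct and takes essentially the same approach as the paper, which simply writes ``This is clear.'' You have just spelled out in detail the identifications \eqref{fiber2}, \eqref{projection}, and \eqref{delta} that make the two definitions coincide verbatim, which is exactly why the paper considers no further argument necessary.
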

\begin{proof}
This is clear. 
\end{proof}

Let $G=\Spec A$ be an affine group scheme of finite type over $k$. We denote by $*$ the hyperoperation on $G^{\textrm{an}}=\Hom_k(A,\mathbb{T})$ defined in Definition \ref{cchyper} by Connes and Consani. One may consider the hyperoperation of Berkovich as a refinement of Connes and Consani's hyperoperation in the following sense:

\begin{pro}\label{refinement}
We have the following inclusion: for any $g,h \in G^{\textrm{an}}$, 
\[
(g \star h) \subseteq (g*h)
\]
\end{pro}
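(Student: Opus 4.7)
The plan is to take an arbitrary $f \in g \star h$, unpack the witness $\beta_f \in \Hom_k(A \otimes_k A, \mathbb{T})$ supplied by Definition \ref{BerkovichhypHopf}, and verify that $f$ satisfies the Connes--Consani membership condition of Definition \ref{cchyper}. The argument uses only two pieces of structure of $\beta_f$: its multiplicativity and the one-sided hypergroup-homomorphism axiom.

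First I would combine the defining identities $\beta_f(a \otimes 1) = g(a)$ and $\beta_f(1 \otimes a) = h(a)$ with multiplicativity of $\beta_f$ to obtain, for every elementary tensor,
\[
\beta_f(a_{(1)} \otimes a_{(2)}) \;=\; \beta_f\bigl((a_{(1)} \otimes 1)\cdot(1 \otimes a_{(2)})\bigr) \;=\; g(a_{(1)})\,h(a_{(2)}).
\]
Then, for any $a \in A$ and any Sweedler expansion $\Delta(a) = \sum a_{(1)} \otimes a_{(2)}$, I would apply $\beta_f$ to this finite sum. Since addition in the classical ring $A \otimes_k A$ is single-valued, the hyperring-homomorphism inclusion $\beta_f(x+y) \subseteq \beta_f(x) + \beta_f(y)$ reduces to the elementwise statement $\beta_f(x+y) \in \beta_f(x) + \beta_f(y)$; iterating on the number of summands and substituting the elementary-tensor formula above yields
\[
f(a) \;=\; \beta_f(\Delta(a)) \;\in\; \sum g(a_{(1)})\,h(a_{(2)}).
\]
Since this holds for every expansion of $\Delta(a)$ and every $a \in A$, the element $f$ satisfies the defining condition of $g * h$, whence $g \star h \subseteq g * h$.

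The only subtle point — and the main reason one should expect an inclusion rather than an equality — is the asymmetry of the hypergroup-homomorphism axiom. Addition in $A \otimes_k A$ is single-valued, but addition in $\mathbb{T}$ is a hyperoperation, so $\beta_f$ merely selects a specific element inside the hyperaddition set rather than recovering the full set. This loss of information in passing from a witness in $\Hom_k(A \otimes_k A, \mathbb{T})$ to the Connes--Consani condition is exactly what distinguishes Berkovich's refinement $\star$ from the Hopf-algebraic hyperoperation $*$ of \S\ref{ccsection}, and is what prevents the inclusion from being promoted to an equality in general.
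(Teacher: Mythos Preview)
Your argument is correct and essentially identical to the paper's own proof: both pick the witness $\beta_f$, factor each elementary tensor as $(a_{(1)}\otimes 1)(1\otimes a_{(2)})$ to get $\beta_f(a_{(1)}\otimes a_{(2)})=g(a_{(1)})h(a_{(2)})$, and then use the hyperring-homomorphism inclusion on the Sweedler sum to conclude $f(a)\in\sum g(a_{(1)})h(a_{(2)})$. Your additional remark on why only an inclusion is expected is a nice clarification but not part of the paper's proof.
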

\begin{proof}
Let $f \in g\star h$. Then there exists $\beta_f \in \Hom_k(A\otimes_kA,\mathbb{T})$ such that $\beta_f\circ j_1=g$, $\beta_f\circ j_2=h$, and $f=\beta_f \circ \Delta$. Let $a \in A$ and $\Delta(a)=\sum a_{(1)}\otimes a_{(2)}$. We have to show that $f(a) \in \sum g(a_{(1)})h(a_{(2)})$. But, since $f=\beta_f \circ \Delta$, we have
\begin{equation}\label{incl}
f(a) \in \sum \beta_f (a_{(1)}\otimes a_{(2)}).
\end{equation}
But, we have $g(a_{(1)})=\beta_f(j_1(a_{(1)}))=\beta_f(a_{(1)}\otimes 1)$ and $h(a_{(2)})=\beta_f(j_2(a_{(2)}))=\beta_f(1\otimes a_{(2)})$. Since $a_{(1)}\otimes a_{(2)}=(a_{(1)}\otimes 1)(1\otimes a_{(2)})$, \eqref{incl} becomes the following:
\begin{equation}
f(a) \in \sum \beta_f (a_{(1)}\otimes a_{(2)}) =\sum g(a_{(1)})h(a_{(2)}).
\end{equation}
This proves the desired result. 
\end{proof}

Recall that, since $\mathbb{K}$ is the final object in the category of hyperrings, we have a canonical projection $\pi:\mathbb{T} \to \mathbb{K}$ and the following reduction map:
\begin{equation}\label{reductionmap}
\pi:\Hom_k(A,\mathbb{T}) \longrightarrow \Hom_k(A,\mathbb{K}), \quad \varphi \mapsto \pi\circ \varphi. 
\end{equation}

\begin{pro}
Let $G=\Spec A$ be an affine group scheme of finite type over $k$. Let $*$ be the Connes and Consani's hyperoperation on $G^{\textrm{an}}$ and $\star$ be the Berkovich's hyperoperation on $G^{\textrm{an}}$, translated in terms of Hopf algebras as in Proposition \ref{HopfBerk}. Let $\pi:G^{\textrm{an}} \to G$ be the reduction map in \eqref{reductionmap}. Then we have the following inclusion: for $f,g \in \Hom_k(A,\mathbb{T})$, 
\[
\pi (f\star g) \subseteq \pi(f)* \pi(g).
\]
\end{pro}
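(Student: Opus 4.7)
The plan is to reduce the statement to the functoriality of the Connes--Consani hyperoperation along the canonical hyperfield homomorphism $\pi:\mathbb{T}\to\mathbb{K}$. First I would invoke Proposition \ref{refinement}, which gives the inclusion $f\star g\subseteq f\,\widetilde{*}\,g$, where $\widetilde{*}$ denotes the Connes--Consani-style hyperoperation written out by the formula of Definition \ref{cchyper} but with values in $\mathbb{T}$ (this makes sense, as pointed out in the remark after Definition \ref{cchyper}). Applying $\pi$ elementwise yields $\pi(f\star g)\subseteq \pi(f\,\widetilde{*}\,g)$, so the proof reduces to showing that
\[
\pi(f\,\widetilde{*}\,g)\subseteq \pi(f)*\pi(g)
\]
for every $f,g\in\Hom_k(A,\mathbb{T})$.

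Next I would verify, either directly or by citing the earlier example recording the unique homomorphism $\mathbb{T}\to\mathbb{K}$, that $\pi$ is indeed a homomorphism of hyperfields. Concretely, $\pi$ sends $-\infty$ to $0$ and every $r\in\mathbb{R}$ to $1$, and one checks case by case that $\pi(x\odot y)=\pi(x)\cdot\pi(y)$ and $\pi(x\oplus y)\subseteq \pi(x)+\pi(y)$ in $\mathbb{K}$. The nontrivial case is $x\oplus y=[-\infty,x]$ when $x=y\neq -\infty$, where the image under $\pi$ is $\{0,1\}=1+1$, and that is exactly what is needed.

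With this in hand, fix $h\in f\,\widetilde{*}\,g$ and $a\in A$ with $\Delta(a)=\sum a_{(1)}\otimes a_{(2)}$. By definition, $h(a)\in\sum f(a_{(1)})\,g(a_{(2)})$ in $\mathbb{T}$. Applying $\pi$ and using the hyperring-homomorphism properties above (together with a short induction on the number of summands to control the iterated hyperaddition), we obtain
\[
\pi(h)(a)=\pi(h(a))\in \pi\Bigl(\sum f(a_{(1)})\,g(a_{(2)})\Bigr)\subseteq \sum \pi(f)(a_{(1)})\,\pi(g)(a_{(2)})
\]
in $\mathbb{K}$, where in the last step we use that $\pi\circ f=\pi(f)$ and $\pi\circ g=\pi(g)$. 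Since this holds for every $a\in A$ and every decomposition of $\Delta(a)$, we conclude $\pi(h)\in \pi(f)*\pi(g)$, as required.

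The only genuine bookkeeping step is the compatibility of $\pi$ with iterated hyper-sums, i.e.\ the inductive extension of $\pi(x\oplus y)\subseteq \pi(x)+\pi(y)$ to arbitrary finite hyper-sums; everything else is formal. I do not anticipate any serious obstacle, since the statement is essentially the functoriality of a coordinatewise-defined hyperoperation along a morphism of coefficient hyperfields.
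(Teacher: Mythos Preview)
Your proposal is correct and follows essentially the same route as the paper: invoke Proposition \ref{refinement} to pass from $\star$ to the Connes--Consani hyperoperation over $\mathbb{T}$, then push forward along the hyperfield homomorphism $\pi:\mathbb{T}\to\mathbb{K}$ using $\pi(x\oplus y)\subseteq\pi(x)+\pi(y)$ and $\pi(xy)=\pi(x)\pi(y)$. The paper's proof is terser (and in fact has the roles of the variables $f,g,h$ permuted in its displayed formulas), but the substance is identical; your explicit mention of the inductive extension to finite hypersums is the only additional detail, and it is routine.
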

\begin{proof}
Let $h \in f\star g$. We have to show that for any $a \in A$, $\Delta(a)=\sum a_{(1)} \otimes a_{(2)}$, 
\begin{equation}\label{ref2}
\pi\circ f (a) \in \sum (\pi\circ g(a_{(1)}))(\pi\circ h(a_{(2)}))=\sum \pi (g(a_{(1)})(h(a_{(2)}))
\end{equation}
However, it follows from Proposition \ref{refinement} that $f(a) \in \sum g(a_{(1)})h(a_{(2)})$ and hence
\[
\pi\circ f (a) \in \pi(\sum g(a_{(1)})h(a_{(2)})) \subseteq \sum \pi (g(a_{(1)})(h(a_{(2)})). 
\]
\end{proof}

\begin{rmk}
It is unclear whether or not two operations $\star$ and $*$ are different. It would be interesting to prove or disprove this. 
\end{rmk}

\subsubsection{Example: Affine line with a trivially valued ground field}

We explicitly compute the case for an affine line, provided that the ground field $k=\overline{\mathbb{Q}}$ is trivially valued. Note that the computation here is not new; Berkovich already computed this case in \cite[Example 5.1.4.]{berkovich2012spectral}, however, we will use the Hopf algebra structure to recompute this example. 


In what follows, let $\mathbb{A}^1_k=\Spec k[T]$ and $i:\mathbb{A}^1_k(k) \hookrightarrow \mathbb{A}^{1,\textrm{an}}_k$. Then we can identify $\mathbb{A}^1_k(k)=k$. We can further identify each $q \in \mathbb{A}^1_k(k)$ as a homomorphism $f_q:k[T] \to k \to \mathbb{T}$ sending $a+bT$ to $a+bq$ and then to $\nu(a+bq)$, where $\nu$ is the trivial valuation on $k$. Let $(\mathbb{A}^{1,\textrm{an}}_k,\odot)$ be the hypergroup which we defined in the previous section (with $\odot$ defined by Berkovich). The following is well known (monomial valuations). 

\begin{lem}\label{monomialvaluation}
Let $k$ be a field and $a, b \in \mathbb{R}$. Then the following map, 
\begin{equation}\label{monomialexampleequation}
\beta:k[X,Y] \to \mathbb{T}, \quad 
\beta(\sum_{i,j} a_{ij}X^iY^j) =\left\{ \begin{array}{ll}
\max_{i,j}\{ia+jb\mid a_{ij}\neq 0\} & \textrm{if $\sum_{i,j} a_{ij}X^iY^j \neq 0$}\\
-\infty& \textrm{if $\sum_{i,j} a_{ij}X^iY^j =0$},
\end{array} \right.
\end{equation}
is an element of $\Hom_k(k[X,Y],\mathbb{T})$. 
\end{lem}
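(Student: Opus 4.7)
The map $\beta$ is just the \emph{monomial (semi)valuation} associated to the weight vector $(a,b)$, so the content of the lemma is that $\beta$ satisfies the axioms of a hyperring homomorphism into $\mathbb{T}$. I would verify the following four things in order:

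\textbf{(i) Compatibility with constants.} For $c\in k^\times$, only the monomial $X^0Y^0$ appears, giving $\beta(c)=0\cdot a+0\cdot b=0$; and $\beta(0)=-\infty$ by the second clause of \eqref{monomialexampleequation}. Since $k$ is trivially valued, this shows $\beta|_k=\nu$. In particular $\beta(1)=0$, so $\beta$ sends the multiplicative identity to the multiplicative identity of $\mathbb{T}$.

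\textbf{(ii) Multiplicativity.} I would prove $\beta(fg)=\beta(f)\odot\beta(g)$ by the standard ``leading form'' argument. Given $f\neq 0$, let $\gamma=\beta(f)$ and let the \emph{leading form} $L(f)$ be the sum of all monomials $a_{ij}X^iY^j$ with $(i,j)$ in the support of $f$ satisfying $ia+jb=\gamma$. Then $L(f)$ is a nonzero element of $k[X,Y]$, and similarly for $g$. Collecting contributions by weight, the $(\gamma+\delta)$-graded component of $fg$ (with $\delta=\beta(g)$) is exactly $L(f)\cdot L(g)$, which is nonzero because $k[X,Y]$ is an integral domain. No monomial of $fg$ has weight strictly greater than $\gamma+\delta$, so $\beta(fg)=\gamma+\delta=\beta(f)\odot\beta(g)$. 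The case where $f$ or $g$ is zero is immediate since $-\infty\odot x=-\infty$ in $\mathbb{T}$.

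\textbf{(iii) Compatibility with hyperaddition.} I need $\beta(f+g)\in\beta(f)\oplus\beta(g)$. If $\beta(f)\neq\beta(g)$, say $\beta(f)>\beta(g)$, then the leading form of $f+g$ coincides with $L(f)\neq 0$, so $\beta(f+g)=\beta(f)=\max\{\beta(f),\beta(g)\}$, as required. If $\beta(f)=\beta(g)=\gamma$, then every monomial of $f+g$ has weight at most $\gamma$, giving $\beta(f+g)\leq\gamma$; this is exactly the condition for membership in $[-\infty,\gamma]=\beta(f)\oplus\beta(g)$. The edge cases where $f=0$, $g=0$, or $f+g=0$ are handled separately but trivially, using $-\infty\oplus x=x$ and $-\infty\oplus-\infty=\{-\infty\}$.

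\textbf{Main obstacle.} The only nontrivial point is the non-cancellation in step (ii): one must know that two nonzero weighted-homogeneous polynomials multiply to a nonzero polynomial, and for arbitrary real weights $a,b$ (which may be rationally dependent, zero, or negative) the leading ``weight class'' can contain many monomials. The clean way around this is to observe that $L(f)$ and $L(g)$ are genuine nonzero elements of $k[X,Y]$, not just formal sums at a given weight, and then invoke that $k[X,Y]$ is a domain; this eliminates any worry about cancellation among equally weighted monomials. Everything else is a bookkeeping check.
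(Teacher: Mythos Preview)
Your argument is correct: the leading-form trick in step (ii), together with the fact that $k[X,Y]$ is a domain, is exactly the right way to rule out cancellation at the top weight, and steps (i) and (iii) are routine. (A trivial slip: you announce ``four things'' but verify three; the missing item is presumably $\beta(0)=-\infty$, which you already folded into (i).)

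As for comparison with the paper: there is nothing to compare against. The paper does not prove this lemma at all; it simply introduces it with ``The following is well known (monomial valuations)'' and moves on. Your write-up therefore supplies what the paper omits, and the approach you chose---grading by the weight $ia+jb$ and using integrality of the polynomial ring to show the top-weight piece survives in a product---is the standard argument one would give for this fact.
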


\begin{rmk}
One can observe that if $a<b <0$, then $\beta(X+Y)=b$. Also if $a=b$, then $\beta(X+Y)=a$. 
\end{rmk}

\begin{lem}\label{monomiallemma2}
Let $c\leq a \in \mathbb{R}$. Then there exists $\nu \in \Hom_k(k[X,Y],\mathbb{T})$ such that $\nu(X)=\nu(Y)=a$ and $\nu(X+Y)=c$. 
\end{lem}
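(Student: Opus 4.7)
The plan is to produce $\nu$ explicitly as a monomial valuation, after performing an appropriate change of coordinates on $k[X,Y]$. The key observation is that Lemma \ref{monomialvaluation} forces $\nu(X+Y) = \max\{\nu(X),\nu(Y)\}$ whenever $X$ and $Y$ are the chosen monomial generators with distinct weights, and also gives $\nu(X+Y) = \nu(X)$ when the weights coincide. So to land $\nu(X+Y)$ at a value strictly below $\nu(X) = \nu(Y) = a$, one must arrange that $X+Y$ \emph{itself} is among the monomial generators.

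Concretely, I would split into two cases. If $c = a$, I just invoke Lemma \ref{monomialvaluation} for $k[X,Y]$ with weights $\nu(X) = \nu(Y) = a$; by the remark following that lemma, $\nu(X+Y) = a = c$. If $c < a$, I perform the change of coordinates $U := X+Y$, $V := X$. Since $k$ is a field, the $k$-algebra map $k[U,V] \to k[X,Y]$ sending $U \mapsto X+Y$ and $V \mapsto X$ is an isomorphism with inverse determined by $X = V$, $Y = U-V$. Apply Lemma \ref{monomialvaluation} to $k[U,V]$ with the weights $\nu(U) = c$ and $\nu(V) = a$, and transport the resulting element of $\Hom_k(k[U,V],\mathbb{T})$ back along this isomorphism to obtain $\nu \in \Hom_k(k[X,Y],\mathbb{T})$.

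It then remains to read off the three values. By construction $\nu(X) = \nu(V) = a$ and $\nu(X+Y) = \nu(U) = c$. For the last one, $\nu(Y) = \nu(U-V)$ is computed directly from \eqref{monomialexampleequation}: the polynomial $U - V \in k[U,V]$ has two nonzero monomials of weight $c$ and $a$ respectively, so
\[
\nu(Y) \;=\; \nu(U-V) \;=\; \max\{c,a\} \;=\; a,
\]
where the last equality uses the hypothesis $c < a$. This yields the desired $\nu$. I do not expect any real obstacle; the only subtlety is realizing that the natural monomial valuation in the coordinates $(X,Y)$ cannot work, and that one must instead choose coordinates in which $X+Y$ becomes a generator.
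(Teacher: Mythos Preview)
Your proof is correct and essentially identical to the paper's: both pass to new coordinates in which $X+Y$ becomes one of the generators and then invoke the monomial valuation of Lemma \ref{monomialvaluation}. The only cosmetic difference is that you split into the cases $c=a$ and $c<a$, whereas the paper handles both simultaneously (since $\max\{a,c\}=a$ holds either way, your second argument already covers $c=a$ as well).
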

\begin{proof}
Let $f:k[X',Y'] \to k[X,Y]$ be the isomorphism such that $f(X')=X$ and $f(Y')=Y+X$. It follows from Lemma \ref{monomialvaluation} that we have a semivaluation $\beta$ on $k[X',Y']$ such that $\beta(X')=a$ and $\beta(Y')=c$. Then we have $\beta(Y'+X')=\max\{a,c\}=a$. Let $g:=f^{-1}$. Then this defines a homomorphism $\nu:=\beta\circ g:k[X,Y] \to \mathbb{T}$. In particular, we obtain $\nu(X)=\beta(X')=a$, $\nu(Y)=\beta(Y'-X')=a$, and $\nu(X+Y)=\beta(Y')=c$ as desired. 
\end{proof}

\begin{lem}\label{classocallem}
There are exactly three cases of homomorphisms $\varphi:k[T] \to \mathbb{T}$ which extends the trivial valuation $\nu:k \to \mathbb{T}$ as follows:
\begin{enumerate}
\item 
$\varphi(f(T))=0$ $\forall f(T) \neq 0$.
\item
$\varphi(T) >0$ and $\varphi(f(T))=\deg(f) \cdot \varphi(T)$. 
\item
$\varphi (T) \leq 0$ and there exists an irreducible polynomial $f(T) \in k[T]$ such that $\varphi(f(T))<0$,
\end{enumerate} 
where $0=1_\mathbb{T}$. 
\end{lem}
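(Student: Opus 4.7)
The plan is to case-split on the value $r := \varphi(T) \in \mathbb{T}$, exploiting two features: multiplicativity forces $\varphi(a_iT^i) = ir$ whenever $a_i \in k^\times$ (since $\varphi|_k = \nu$ is trivial, meaning $\varphi(a_i) = 0 = 1_\mathbb{T}$), and the definition of $\oplus$ on $\mathbb{T}$ gives the ultrametric inequality $\varphi(f+g) \leq \max\{\varphi(f),\varphi(g)\}$, with equality whenever $\varphi(f) \neq \varphi(g)$. Applied inductively to $f(T) = \sum_{i=0}^n a_iT^i$ with $a_n \neq 0$, these together yield
\[
\varphi(f) \leq \max\{\,ir \mid a_i \neq 0\,\},
\]
with equality whenever this maximum is attained at a unique index.

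First I would handle the case $r > 0$. Then the values $ir$ for distinct $i$ with $a_i \neq 0$ are pairwise distinct, so the maximum in the displayed inequality is attained uniquely at $i = n = \deg f$, forcing $\varphi(f) = n r = \deg(f) \cdot \varphi(T)$. This is exactly case~(2), and note it also confirms that $\varphi(f) \geq 0$ for every nonzero $f$ in this regime, making case~(2) mutually exclusive from the other two cases.

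Next, I would treat the case $r \leq 0$ (including $r = -\infty$). Then $ir \leq 0$ for every $i \geq 0$, so the same inequality gives $\varphi(f) \leq 0$ for every nonzero $f \in k[T]$. If in fact $\varphi(f) = 0$ for every nonzero $f$, we are in case~(1). Otherwise, pick some $f$ with $\varphi(f) < 0$. Since $k = \overline{\mathbb{Q}}$ is algebraically closed, factor $f = c \prod_{j}(T - b_j)$ with $c \in k^\times$ and $b_j \in k$. Multiplicativity together with $\varphi(c) = 0$ gives
\[
\varphi(f) = \sum_{j} \varphi(T - b_j),
\]
and because this sum is strictly negative at least one summand $\varphi(T - b_j)$ is strictly negative. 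Each $T - b_j$ is irreducible in $k[T]$, so this is case~(3). The three cases are then manifestly disjoint (distinguished by the sign of $r$ and by whether some irreducible polynomial has strictly negative valuation), so the trichotomy is established.

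The main obstacle — really a bookkeeping point rather than a deep one — is handling the degenerate value $r = -\infty$ consistently. In that case $T \in \ker\varphi$ and the naive expression $i \cdot r$ must be read via the absorbing rule $a \odot (-\infty) = -\infty$ in $\mathbb{T}$, but the ultrametric argument still applies: one checks $\varphi(T) = -\infty$ forces $\ker\varphi = (T)$, so $\varphi$ factors through $k[T]/(T) = k$ and lands in case~(3) with $T$ itself as the irreducible witness. Modulo this check, the proof reduces to the elementary ultrametric/factorization arguments above.
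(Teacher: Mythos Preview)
Your argument is correct: the case split on the sign of $r=\varphi(T)$, the unique-maximum ultrametric computation for $r>0$, and the factorization step over the algebraically closed field for $r\le 0$ together give a clean and complete trichotomy, and your handling of the degenerate value $r=-\infty$ via $\ker\varphi=(T)$ is fine.

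The paper does not actually prove this lemma; it only records it as an elementary result and cites \cite[\S 3.1]{payne2015topology}. So there is no ``paper's approach'' to compare against beyond that citation. Your write-up supplies precisely the standard argument one would expect behind such a citation, and in fact the subsequent lemma in the paper (identifying the unique generator $g_b$ of $\mathfrak{p}_\varphi$ and computing $\varphi(f)$ from the multiplicity of $g_b$) uses exactly the same ingredients you invoke: multiplicativity, the ultrametric inequality, and linear factorization over $k=\overline{\mathbb{Q}}$. In that sense your proof is fully in line with the surrounding material.
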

\begin{proof}
This is an elementary result, for instance, see \cite[\S 3.1]{payne2015topology}.
\end{proof}

\begin{rmk}\label{equivalencermk}
The first case of Lemma \ref{classocallem} is the trivial valuation. In the second case, $\varphi$ is uniquely determined by the value $\varphi(T) \in (0,\infty)$; $\varphi(f)=\deg(f)\varphi(T)$. In particular, if $\varphi_1$ and $\varphi_2$ belong to the second case, then there exists $q \in (0,\infty)$ such that $\varphi_1=q\varphi_2$. 
\end{rmk}

Let $\delta$ be the trivial valuation. One can visualize the Berkovich analytification of an affine line in the following way depending on the above cases: 

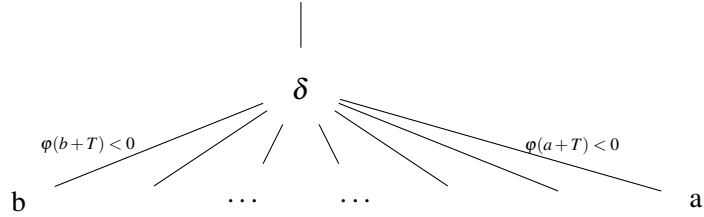
\begin{figure}[h]
\[
\begin{tikzpicture}[draw, circle,inner sep=7pt, -]
 \node {} 
    child { node {$\delta$}
    child [ missing ]
             child { node {b} edge from parent node[left] {$\Scale[0.6]{\varphi(b+T)<0}$}}  
             child { node {} }  
             child {node {\ldots}}
             child { node {\ldots} }  
             child { node {} }  
             child { node {} }  
             child { node {a} edge from parent node[right] {$\Scale[0.6]{\varphi(a+T)<0}$}}  
                                   };
 
\end{tikzpicture}
\]
\caption{\small{The Berkovich analytification $\mathbb{A}^{1,\textrm{an}}_k$ with respect to the trivial valuation on $k$.}}
\end{figure}

Let $\varphi:k[T] \to \mathbb{T}$ be a homomorphism as in the third case of Lemma \ref{classocallem}. Let $\mathfrak{p}_\varphi:=\{f \in k[T] \mid \varphi(f) <0\}$. Then one can easily show that $\mathfrak{p}_\varphi \in \Spec(k[T])$. Since $k$ is algebraically closed, we can find a unique linear polynomial (up to a product of constants) $g_b:=b+T$ which generates $\mathfrak{p}_\varphi$ for some $b \in k$. In particular, if $\varphi_1$, $\varphi_2 \in \Hom_k(k[T],\mathbb{T})$ belong to the third case such that $\mathfrak{p}_{\varphi_1}=\mathfrak{p}_{\varphi_2}$, then there exists $q \in (0,\infty)$ such that $\varphi_1=q\varphi_2$. Furthermore let $\varphi_b$ belong to the third case and $\mathfrak{p}_{\varphi_b}=<g_b>$, where $g_b:=b+T$. Then $\varphi_b$ is uniquely determined by $\varphi_b(g_b)$. In fact, we have the following. 

\begin{lem}
Let $\varphi:k[T] \to \mathbb{T}$ be a homomorphism such that $\varphi(T) \leq 0$ and $\mathfrak{p}_\varphi=<g_b>$, where $g_b:=b+T$. Then for any $f \in k[T]$, $\varphi(f)=r_b\varphi(g_b)$, where $r_b$ is the maximum natural number such that $g_b^{r_b}$ divides $f$.  
\end{lem}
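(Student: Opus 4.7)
The plan is to reduce to computing $\varphi$ on a polynomial coprime to $g_b$, where the semivaluation must be trivial. Concretely, by unique factorization in $k[T]$, I would write
\[
f = g_b^{r_b}\cdot h, \qquad \gcd(g_b,h)=1,
\]
where $r_b$ is the $g_b$-adic order of $f$. Multiplicativity of $\varphi$ (it is a monoid homomorphism on $(k[T],\cdot)$) then gives
\[
\varphi(f) = r_b\,\varphi(g_b) + \varphi(h),
\]
so the entire problem reduces to showing $\varphi(h) = 0$.

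For this I would prove $\varphi(h) \geq 0$ and $\varphi(h) \leq 0$ separately. The lower bound is immediate from the hypothesis $\mathfrak{p}_\varphi = \langle g_b\rangle$: since $g_b \nmid h$, we have $h \notin \mathfrak{p}_\varphi$, i.e.\ $\varphi(h) \not < 0$, which is the statement $\varphi(h) \geq 0$. For the upper bound, write $h = \sum_{i=0}^{n} c_i T^i$ with $c_i \in k$. Because $\varphi$ extends the trivial valuation on $k$, we have $\varphi(c_i) = 0$ for each $c_i \neq 0$, and multiplicativity together with the assumption $\varphi(T) \leq 0$ yields
\[
\varphi(c_i T^i) = \varphi(c_i) + i\,\varphi(T) = i\,\varphi(T) \leq 0.
\]
Since $\varphi$ is a hyperring homomorphism into $\mathbb{T}$, iterated application of $\varphi(a+b) \in \varphi(a)\oplus\varphi(b)$ gives $\varphi(h) \in \bigoplus_{i=0}^n \varphi(c_i T^i)$; but in the tropical hyperfield every element of $x\oplus y$ is $\leq \max\{x,y\}$, so by induction on $n$ we conclude $\varphi(h) \leq \max_i \varphi(c_i T^i) \leq 0$.

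Combining the two inequalities gives $\varphi(h) = 0$, hence $\varphi(f) = r_b\,\varphi(g_b)$, as claimed. The only nontrivial step is the upper bound $\varphi(h) \leq 0$, whose verification amounts to carefully tracking the tropical hyperaddition of finitely many nonpositive terms; once $\varphi(T)\leq 0$ and the trivial valuation on $k$ are fixed, this is really the assertion that $\varphi$ takes nonpositive values on the whole polynomial ring, after which the conclusion is forced by the characterization of $\mathfrak{p}_\varphi$.
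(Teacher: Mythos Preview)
Your proof is correct and follows the same overall architecture as the paper's: factor out the maximal power of $g_b$, then show $\varphi(h)=0$ for the coprime cofactor $h$ by combining the lower bound $\varphi(h)\geq 0$ (from $h\notin\mathfrak{p}_\varphi$) with an upper bound $\varphi(h)\leq 0$.

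The only point of divergence is how the upper bound is obtained. The paper uses that $k$ is algebraically closed to split $h$ into linear factors $l_i=c+T$ with $l_i\notin\langle g_b\rangle$, and then argues $\varphi(l_i)\in\varphi(c)\oplus\varphi(T)\subseteq(-\infty,0]$ for each factor, whence $\varphi(h)=\sum_i\varphi(l_i)\leq 0$. You instead expand $h$ in the monomial basis and bound each $\varphi(c_iT^i)=i\,\varphi(T)\leq 0$, then use subadditivity of the tropical hypersum. Your route is slightly more elementary and, notably, does not invoke algebraic closedness of $k$ at this step; the paper's factorization argument does. Either way the conclusion is the same, and your observation at the end---that once $\varphi(T)\leq 0$ and $\varphi|_k$ is trivial, $\varphi$ is nonpositive on all of $k[T]$---is a clean way to package the upper bound.
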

\begin{proof}
We let $\oplus$ be the hyperaddition of $\mathbb{T}$. Let $f \in k[T]$ such that $g_b$ does not divide $f$. Since $k$ is algebraically closed, $f=l_1\cdots l_n$ for some linear polynomials $l_i$ which is not a constant multiple of $g_b$. Let $l_i=c+T$. As $l_i \not \in \mathfrak{p}_\varphi$, we have $\varphi(l_i) \geq 0$. However, $\varphi(l_i)=\varphi(c+T) \in \varphi(c) \oplus \varphi(T)$. If $c=0$, then we have $\varphi(l_i) =\varphi(T) \leq 0$ and hence $\varphi(l_i)=0$. If $c \neq 0$, then $\varphi(l_i) \in \varphi(c)\oplus \varphi(T)=0 \oplus \varphi(T)$. But, this still implies that $\varphi(l_i) \leq 0$ and hence $\varphi(l_i)=0$. This proves the lemma and furthermore this implies that $\varphi(g_b)$ uniquely determines $\varphi$. 
\end{proof}

We explicitly compute the case $g_0=T$, where $0=1_\mathbb{T}$.  

\begin{lem}\label{onebranch}
Let $x, y \in (-\infty, 0)$. Let $f_x,f_y$ be points of $\mathbb{A}^{1,\textrm{an}}_k$ such that $\mathfrak{p}_{f_x}=\mathfrak{p}_{f_y}=<g_0>$ and $f_x(g_0)=x$, $f_y(g_0)=y$. If $h \in f_x \odot f_y$, where $\odot$ is the Berkovich's hyperoperation, then $\mathfrak{p}_{h}=<g_0>$ and 
\begin{equation}\label{hypersum}
f_x\odot f_y =\left\{ \begin{array}{ll}
f_{\max\{x,y\}} & \textrm{if $x\neq y$}\\
\{f_t \mid t \in \left[-\infty,x\right]\}& \textrm{if $x=y$},
\end{array} \right.
\end{equation}
where $\left[-\infty,x\right]:=\{t\in \mathbb{T} \mid t\leq x\}$.
\end{lem}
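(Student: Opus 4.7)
The plan is to translate Berkovich's hyperoperation into the Hopf-algebraic form of Proposition \ref{HopfBerk}. With $A = k[T]$ and coproduct $\Delta(T) = T\otimes 1 + 1\otimes T$, and identifying $A\otimes_k A$ with $k[X,Y]$ via $X = T\otimes 1$, $Y = 1\otimes T$, one has
\[
f_x \odot f_y = \bigl\{h \in \Hom_k(k[T],\mathbb{T}) : \exists\, \beta \in \Hom_k(k[X,Y],\mathbb{T}),\ \beta(X)=x,\ \beta(Y)=y,\ h(T) = \beta(X+Y)\bigr\}.
\]
So the task reduces to determining precisely which values $\beta(X+Y)$ are attainable under the constraints $\beta(X)=x$ and $\beta(Y)=y$.

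For the forward inclusion, I would observe that since $\beta$ is a hyperring homomorphism, $\beta(X+Y) \in \beta(X)\oplus \beta(Y) = x\oplus y$. Because $x,y \in (-\infty,0)$, every element of $x\oplus y$ is strictly below $0$ in $\mathbb{T}$, so $h(T) < 0$. By Lemma \ref{classocallem}, $h$ falls in the third case, and since $T$ itself is an irreducible element with $h(T) < 0$, we have $\mathfrak{p}_h = \langle T\rangle = \langle g_0\rangle$. Combined with the uniqueness statement in the lemma immediately preceding the present one, this forces $h$ to be determined by $h(T)$, so $h = f_{h(T)}$ with $h(T) \in x\oplus y$.

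For the reverse inclusion, given any $c \in x\oplus y$, I would construct $\beta$ explicitly. If $x \neq y$, then $c = \max\{x,y\}$, and Lemma \ref{monomialvaluation} applied with $a=x$, $b=y$ produces the monomial semivaluation satisfying $\beta(X)=x$, $\beta(Y)=y$, and $\beta(X+Y) = \max\{x,y\} = c$. If $x = y$, then $c \leq x$, and Lemma \ref{monomiallemma2} applied with $a=x$ furnishes $\beta$ with $\beta(X)=\beta(Y)=x$ and $\beta(X+Y)=c$. In both cases $h := \beta\circ\Delta$ is precisely $f_c$, giving the inclusion $\{f_c : c \in x\oplus y\} \subseteq f_x\odot f_y$.

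Since Lemmas \ref{monomialvaluation}, \ref{monomiallemma2}, and \ref{classocallem} are already in hand, no serious obstacle remains; the only subtlety worth checking is the boundary case $c = -\infty$ in the equal-valuation scenario, which corresponds to $X+Y$ lying in $\ker\beta$ and is still covered by the change-of-variables construction in Lemma \ref{monomiallemma2}. Assembling the two inclusions yields the displayed formula \eqref{hypersum}.
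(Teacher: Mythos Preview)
Your proof is correct and mirrors the paper's approach: translate $\odot$ into the Hopf-algebraic form, bound $h(T)\in x\oplus y$ for the forward inclusion, and invoke the monomial-valuation lemmas for the reverse (your explicit case-split on $x\neq y$ versus $x=y$, and your flagging of the $c=-\infty$ endpoint, are in fact tidier than the paper's blanket citation of Lemma~\ref{monomiallemma2}). One minor quibble: in your displayed description of $f_x\odot f_y$ the condition should read $h=\beta\circ\Delta$ rather than merely $h(T)=\beta(X+Y)$, though the two coincide once you establish---as you do in the next paragraph---that any such $h$ is determined by $h(T)$.
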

\begin{proof}
We let $\oplus$ be the hyperaddition of $\mathbb{T}$. We first show that if $h \in f_x \odot f_y$ then $\mathfrak{p}_h=<g_0>$. In fact, there exists $\beta_h \in \Hom_k(A\otimes_k A, \mathbb{T})$ such that 
\begin{equation}\label{conditions}
\beta_h \circ j_1=f_x,\quad \beta_h \circ j_2 =f_y, \quad \textrm{and}\quad h=\beta_h \circ \Delta.
\end{equation}
It follows from \eqref{conditions} that
\[ 
\beta_h\circ j_1(g_0)=x, \quad \beta_h \circ j_2(g_0)=y,\quad \textrm{and}\quad h(g_0)=\beta_h \circ \Delta(g_0).\]
Hence we have:
\[
h(T)=\beta_h\circ \Delta(T)=\beta_h (T \otimes1+1\otimes T) \subseteq \beta_h (T \otimes1)\oplus \beta_h (1\otimes T) =x\oplus y. 
\]
Since $x,y <0$, this implies that $h(T)=h(g_0) <0$ and this forces $\mathfrak{p}_h$ to be generated by $g_0$. This proves that if $h \in f_x\odot f_y$ then $h(g_0) \in x\oplus y$ and $\mathfrak{p}_h=<g_0>$. 

Conversely, we have to show that if $z \in x\oplus y$ and $h \in \Hom_k(A,\mathbb{T})$ such that $h(g_0)=z$, then $h \in f_x \odot f_y$, i.e., we have to find $\beta_h :A \otimes_k A \to \mathbb{T}$ which satisfies the condition \eqref{conditions}. This is equivalent to find a homomorphism $\beta:k[X,Y] \to \mathbb{T}$ such that $\beta(X)=x$, $\beta(Y)=y$, and $\beta(X+Y)=z \leq \max\{x,y\}$. But, this has been proven in Lemma \ref{monomiallemma2}. 
\end{proof}



The following proposition shows that $(\mathbb{A}^{1,\textrm{an}}_k,\odot)$ contains a hypergroup which is isomorphic to a certain sub-hypergroup of $(\mathbb{T},\oplus)$. We define the following subset of $\mathbb{T}$:
\[
\mathbb{T}_{<0}:=\{a \in \mathbb{T} \mid a<0\}. 
\]
One can easily see that $\mathbb{T}_{<0}$ is a sub-hypergroup of $\mathbb{T}$, i.e., $\mathbb{T}_{<0}$ itself is a hypergroup with the induced hyperoperation. 

\begin{pro}
Consider the following set:
\[
H:=\{h \in \mathbb{A}^{1,\textrm{an}}_k \mid h(T) < 0 \}.
\]
Then $(H,\odot)$ is a hypergroup which is isomorphic to $\mathbb{T}_{<0}$. 
\end{pro}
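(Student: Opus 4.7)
The plan is to produce an explicit bijection $\Phi: H \to \mathbb{T}_{<0}$ given by $\Phi(h) = h(T)$ and check that it is a strict hypergroup isomorphism; all of the hypergroup structure on $H$ will be pulled back through $\Phi$ from the already-known hypergroup $\mathbb{T}_{<0}$.

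First, I would classify $H$ concretely. By Lemma \ref{classocallem}, any $h \in \Hom_k(k[T],\mathbb{T})$ extending the trivial valuation falls into one of three classes, and the condition $h(T) < 0$ forces case (3) together with $\mathfrak{p}_h = (T)$. The unnumbered lemma preceding Lemma \ref{onebranch} shows that such an $h$ is uniquely determined by the value $h(T) \in [-\infty,0) = \mathbb{T}_{<0}$; the extreme case $h(T) = -\infty$ is also covered since then $\ker h = (T)$ and $h$ factors uniquely as $k[T] \twoheadrightarrow k \xrightarrow{\nu} \mathbb{T}$. Writing $f_x$ for the unique element of $H$ with $f_x(T) = x$, this yields a set bijection $\Phi: H \to \mathbb{T}_{<0}$, $f_x \mapsto x$.

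Next, I would extend Lemma \ref{onebranch} to cover $x, y \in \mathbb{T}_{<0}$ (the lemma as stated assumes $x, y \in (-\infty,0)$). The forward direction is identical: for any $h \in f_x \odot f_y$ coming from some $\beta \in \Hom_k(k[X,Y],\mathbb{T})$ with $\beta(X) = x$, $\beta(Y) = y$, one has $h(T) = \beta(X+Y) \in x \oplus y \subseteq \mathbb{T}_{<0}$, which also shows $H$ is closed under $\odot$. For the reverse direction when $x = -\infty$, the auxiliary $\beta$ of Lemma \ref{monomiallemma2} must kill $X$; one simply takes $\beta$ to be the composition $k[X,Y] \twoheadrightarrow k[X,Y]/(X) \cong k[Y] \to \mathbb{T}$, where the last arrow is the monomial valuation of Lemma \ref{monomialvaluation} sending $Y$ to $y$. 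The cases $y = -\infty$ and $x = y = -\infty$ are analogous. Combining the two directions gives $f_x \odot f_y = \Phi^{-1}(x \oplus y)$, so $\Phi$ is a strict homomorphism of hyperoperations.

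Finally, I would identify the identity. The identity $e$ of $(\Gan, \odot)$ is the image under the inclusion $G(k) \hookrightarrow \Gan$ of the identity of the additive group $G(k) = k$, namely the homomorphism $k[T] \to k \xrightarrow{\nu} \mathbb{T}$ sending $T$ to $\nu(0) = -\infty$; hence $e = f_{-\infty} \in H$ and $\Phi(e) = -\infty$, which is the identity of $(\mathbb{T}_{<0}, \oplus)$. Since $\Phi$ is a bijection, sends identity to identity, and strictly preserves the hyperoperations, it transports the hypergroup axioms verified for $\mathbb{T}_{<0}$ onto $H$, so $(H, \odot)$ is a hypergroup and $\Phi$ is the desired isomorphism. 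The only genuine technical point is the boundary extension of Lemma \ref{onebranch} to $x = -\infty$, which I regard as the main (though mild) obstacle; everything else is bookkeeping once that extension is in place.
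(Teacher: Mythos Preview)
Your proposal is correct and follows exactly the paper's approach: define the evaluation map $\Phi(h)=h(T)$ and invoke Lemma~\ref{onebranch}. The paper's proof is the single sentence ``Define an isomorphism $\varphi:H\to\mathbb{T}_{<0}$ sending $f$ to $f(T)$; the result follows from Lemma~\ref{onebranch}.''

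Your version is in fact more careful than the paper's. You correctly observe that Lemma~\ref{onebranch} is stated only for $x,y\in(-\infty,0)$, whereas $\mathbb{T}_{<0}=[-\infty,0)$ contains $-\infty$; the paper silently glosses over this boundary case. Your extension---factoring $\beta$ through $k[X,Y]/(X)$ when $x=-\infty$---is the right fix and is easy, as you note. You also make explicit that $f_{-\infty}$ is the identity of $(\Gan,\odot)$, which the paper does not spell out here. So there is no gap in your argument; if anything, you have patched a small omission in the original.
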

\begin{proof}
Define an isomorphism, $\varphi:H \to \mathbb{T}_{<0}$ sending $f$ to $f(T)$. The result follows from Lemma \ref{onebranch}. 
\end{proof}

\begin{rmk}
Although we assume that $k$ is algebraically closed, we may use \cite[Lemma 5.1.2.]{berkovich2012spectral} to compute the non-algebraically closed field case. 
\end{rmk}

\begin{rmk}
Let $\delta$ be the trivial valuation and consider the following subset of $\mathbb{A}^{1,\textrm{an}}_k$:
\[
B:=\{h \in \mathbb{A}^{1,\textrm{an}}_k \mid h(T) \neq 0\}-\{\delta\}. 
\]
We may consider a new hyperoperation $\diamond$ on $B$ as follows:
\[
f \diamond g =\left\{ \begin{array}{lll}
f\odot g & \textrm{if $f(T)\neq g(T)$}\\
f\odot g& \textrm{if $f(T)=g(T) <0$}\\
(f\odot g) \bigcap B & \textrm{if $f(T)=g(T) >0$},
\end{array} \right.
\]
Then one can easily see that $(B,\diamond)$ is isomorphic to the hypergroup $(\mathbb{T}-\{0\},\oplus)$. To be precise, the isomorphism $\varphi$ is given by $\varphi(f)=f(T)$. It seems that the computation involving the trivial valuation $\delta$ is rather subtle as Connes and Consani already observed in \cite[\S7, \S8]{con4}. 
\end{rmk}
 
\section{Geometry of hyperfields in a view of real algebraic geometry} \label{spaceoforderings}

In this section, we prove that the functor $\Sper$ (a real algebraic analogue of the functor $\Spec$) is isomorphic to the functor $\Hom(-,\mathbb{S})$, where $\mathbb{S}$ is the hyperfield of signs. We first recall the definitions for \emph{real spectra and real schemes}. 

\begin{mydef}
Let $A$ be a commutative ring. By an \emph{ordering} on $A$, we mean a subset $P$ of $A$ such that
\begin{enumerate}
\item 
$P+P\subseteq P$ and $P\cdot P \subseteq P$.
\item
$a^2 \in P$ $\forall a \in A$ and $-1 \not \in P$. 
\item
$P\cup (-P) =A$, where $-P:=\{-a \mid a\in P\}$. 
\item
$P \cap (-P) \in \Spec A$ (the support of $P$). 
\end{enumerate}
\end{mydef}

\begin{lem}\label{orderinglemma}
Let $A$ be a commutative ring. There is a one-to-one correspondence between orderings on $A$ and elements of $\Hom(A,\mathbb{S})$. 
\end{lem}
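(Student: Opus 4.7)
The plan is to construct the bijection explicitly, giving a map in each direction, and verifying they are mutually inverse. The key observation that controls everything is that the support $P \cap (-P)$ of an ordering is a prime ideal, and $\mathbb{S}$ carries exactly the combinatorial data of ``sign with possibly zero support''.

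First, I would go from orderings to homomorphisms. Given an ordering $P$ on $A$, define $\varphi_P : A \to \mathbb{S}$ by
\[
\varphi_P(a) = \begin{cases} 1 & \text{if } a \in P\setminus (-P),\\ -1 & \text{if } a \in (-P)\setminus P,\\ 0 & \text{if } a \in P\cap (-P).\end{cases}
\]
This is well defined because $P \cup (-P)=A$. Multiplicativity of $\varphi_P$ follows from $P\cdot P \subseteq P$ together with the fact that $\mathfrak{p}:=P\cap(-P)$ is a prime ideal, so $\varphi_P$ induces an ordering on the residue field $k(\mathfrak{p})$ in the classical sense. For the hyperadditivity inclusion $\varphi_P(a+b)\in \varphi_P(a)+\varphi_P(b)$, the only nontrivial cases are when $\varphi_P(a)$ and $\varphi_P(b)$ agree or one is $0$; in each such case one reduces modulo $\mathfrak{p}$ and uses that in the totally ordered field $\mathrm{Frac}(A/\mathfrak{p})$ the sum of two strictly positive elements is strictly positive, so $a+b$ lies in $P\setminus (-P)$ as required. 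The remaining case $\varphi_P(a)+\varphi_P(b)=1+(-1)=\mathbb{S}$ is automatic.

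Next, in the opposite direction, given $\varphi:A\to \mathbb{S}$ set $P_\varphi := \varphi^{-1}(\{0,1\})$. I would verify the four axioms of an ordering. The closure $P_\varphi+P_\varphi \subseteq P_\varphi$ comes from inspecting that the subset $\{0,1\}\subseteq \mathbb{S}$ is closed under hyperaddition, and closure under multiplication is similar. For $a^2\in P_\varphi$ we use $\varphi(a^2)=\varphi(a)^2\in\{0,1\}$, and $-1\notin P_\varphi$ follows since hyperring homomorphisms respect additive inverses (the inverse in a canonical hypergroup is unique), giving $\varphi(-1)=-\varphi(1)=-1$. The identity $P_\varphi \cup (-P_\varphi)=A$ is immediate from $\mathbb{S}=\{-1,0,1\}$. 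Finally, $P_\varphi \cap (-P_\varphi)=\varphi^{-1}(\{0\})=\ker\varphi$, which is a prime ideal because composing with the unique homomorphism $\pi:\mathbb{S}\to\mathbb{K}$ of Example~\ref{mainexample} gives a homomorphism $\pi\circ\varphi:A\to\mathbb{K}$ whose kernel (equal to $\ker\varphi$) is prime by Definition~\ref{primeideal}.

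To finish, I would check that the assignments $P\mapsto \varphi_P$ and $\varphi\mapsto P_\varphi$ are mutually inverse. The identity $P_{\varphi_P}=P$ is immediate from unwinding the definitions, since $\varphi_P^{-1}(\{0,1\})=(P\setminus(-P))\cup(P\cap(-P))=P$. Conversely, for $\varphi_{P_\varphi}=\varphi$, one notes that $a\mapsto \varphi(a)$ is already valued in $\{-1,0,1\}$, and the three cases in the definition of $\varphi_{P_\varphi}$ recover precisely $\varphi(a)=1$, $\varphi(a)=-1$, and $\varphi(a)=0$ respectively, using $\varphi(-a)=-\varphi(a)$ to sort out when $a$ is or is not in $-P_\varphi$. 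There is no real obstacle in this argument; the only mildly subtle point is the hyperadditivity verification for $\varphi_P$, and the trick there is to reduce modulo the support and invoke the ordered field structure on $\mathrm{Frac}(A/\mathfrak{p})$ rather than trying to argue directly in $A$.
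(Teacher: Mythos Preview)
Your proof is correct and follows essentially the same route as the paper: define $\varphi_P$ by the sign trichotomy, define $P_\varphi=\varphi^{-1}(\{0,1\})$, and check the two assignments are mutually inverse. The only differences are cosmetic: the paper leaves the verification that $\varphi_P$ is a hyperring homomorphism to the reader, while you spell it out by passing to the ordered residue field $\mathrm{Frac}(A/\mathfrak{p})$; and for $\varphi(-1)=-1$ the paper computes $0=\varphi(1+(-1))\in 1+\varphi(-1)$ directly, whereas you invoke uniqueness of additive inverses in a canonical hypergroup. Both arguments are fine and lead to the same bijection.
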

\begin{proof}
Let $P$ be an ordering on $A$. Define a function $\varphi_P:A \to \mathbb{S}$ as follows:
\begin{equation}
\varphi_P(a) =\left\{ \begin{array}{lll}
1 & \textrm{if $a \in P\cap (-P)^c$},\\
0 & \textrm{if $a \in P \cap (-P)$}, \\
-1
& \textrm{if $a \in (-P)\cap P^c$}.
\end{array} \right.
\end{equation} 
One can easily see that $\varphi_P \in \Hom(A,\mathbb{S})$.

Conversely, for any $\varphi \in \Hom(A,\mathbb{S})$, let $P:=\varphi^{-1}(\{0,1\})$. Then $P$ is an ordering. Indeed, we obviously have $P+P \subseteq P$ and $P\cdot P \subseteq P$. For any $a \in A$, we have $\varphi(a^2)=(\varphi(a))^2 \in \{0,1\}$ and hence $a^2 \in P$. Furthermore, we have 
\[
\varphi(0)=\varphi(1+(-1))=0 \in \varphi(1)+\varphi(-1)=1+\varphi(-1).
\]
This implies that $\varphi(-1)=-1$ and hence $-1 \not\in P$. The condition that $P \cup (-P) =A$ is clear. Finally $P\cap (-P)=\ker(\varphi)$ and hence $P \cap (-P) \in \Spec A$. Clearly, these two constructions are inverses to each other. This shows the desired one-to-one correspondence.  
\end{proof}

Recall that a formally real field is a field $F$ equipped with an ordering. In other words, from Lemma \ref{orderinglemma}, a field $F$ is formally real if and only if the set $\Hom(F,\mathbb{S})$ is nonempty. 

\begin{lem}
Let $A$ be a commutative algebra over a formally real field $F$. We fix an ordering $P_F$ of $F$, i.e., we fix a homomorphism $\varphi_{P_F}:F \to \mathbb{S}$. Then there is a one-to-one correspondence between orderings on $A$ which contains $P_F$ and elements of $\Hom_F(A,\mathbb{S})$. 
\end{lem}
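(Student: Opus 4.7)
The plan is to reduce this to the preceding lemma (the absolute case), where the extra ``$F$-compatibility'' on the homomorphism side should translate into the containment $P_F \subseteq P$ on the ordering side. By that lemma, one already has a bijection $P \leftrightarrow \varphi_P$ between orderings on $A$ and arbitrary elements of $\Hom(A,\mathbb{S})$, so it suffices to check that $\varphi_P|_F = \varphi_{P_F}$ if and only if $P_F \subseteq P$; composed with the previous lemma, this gives the desired correspondence.

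The forward direction is immediate: if $\varphi_P|_F = \varphi_{P_F}$, then for any $a \in P_F$ we have $\varphi_P(a) = \varphi_{P_F}(a) \in \{0,1\}$, hence $a \in \varphi_P^{-1}(\{0,1\}) = P$. For the converse, I would exploit the fact that the support $P \cap (-P)$ is a prime ideal of $A$ (by the ordering axioms) whose contraction to $F$ is a prime ideal not containing $1$; since $F$ is a field this contraction must equal $(0)$, i.e., $(P \cap (-P)) \cap F = \{0\}$. With this in hand, a case analysis on the value $\varphi_{P_F}(a) \in \{-1,0,1\}$ for each $a \in F$ recovers $\varphi_P(a) = \varphi_{P_F}(a)$: the value $0$ forces $a = 0$ (since the support of $P_F$ in $F$ is $\{0\}$), so $\varphi_P(a) = 0$; the value $1$ puts $a \in P_F \subseteq P$, and the support restriction rules out $\varphi_P(a) = 0$ (which would require $a \in P \cap (-P) \cap F = \{0\}$, so $a = 0$, contradicting $\varphi_{P_F}(a) = 1$), while $\varphi_P(a) = -1$ is ruled out by $a \in P$; the value $-1$ is symmetric.

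The main obstacle is essentially bookkeeping: the three-valued nature of $\mathbb{S}$ together with the fact that orderings carry both a sign datum and a support datum require one to juggle the two pieces of information simultaneously. The key leverage is that the support of the restriction of $P$ to $F$ is forced to be trivial because $F$ is a field; this is what rules out spurious ``support-valued'' inconsistencies between $\varphi_P|_F$ and $\varphi_{P_F}$, and thereby upgrades the containment $P_F \subseteq P$ to the equality of restrictions $P \cap F = P_F$ needed for $\varphi_P|_F = \varphi_{P_F}$.
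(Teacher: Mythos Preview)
Your argument is correct and is essentially what the paper intends: the paper's own proof is the one-line remark ``The proof is essentially same as the proof of Lemma~\ref{orderinglemma},'' and your reduction to that lemma together with the verification that $P_F\subseteq P$ is equivalent to $\varphi_P|_F=\varphi_{P_F}$ is exactly the extra bookkeeping that sentence is gesturing at. Your use of the fact that $(P\cap(-P))\cap F=\{0\}$ (because $F$ is a field and $-1\notin P$) is the right way to handle the support, and the case analysis is clean.
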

\begin{proof}
The proof is essentially same as the proof of Lemma \ref{orderinglemma}. 
\end{proof}

Now, we are ready to introduce a real spectrum. First, we define a real spectrum as a topological space and then review a \emph{Nash structure sheaf} which makes a real spectrum as a locally ringed space. For the details, we refer the readers to \cite{coste2001uniform} or \cite{fujita2003real}.  

\begin{mydef}\label{spectraltopology}
Let $A$ be a commutative ring. The \emph{real spectrum} $\Sper A$ of $A$ is the set of orderings on $A$ with topology, called the spectral topology, given by the basis of open subsets of the form $\{U(f)\}_{f \in A}$, where
\[
U(f):=\{P \in \Sper A \mid f \not \in -P\}. 
\]
\end{mydef}

\begin{rmk}
It follows from Definition \ref{spectraltopology} that, for $f \in A$, 
\[
U(-f):=\{P \in \Sper A \mid -f \not \in -P\}=\{P \in \Sper A \mid f \not \in P\}. 
\]
\end{rmk}

Next, we introduce a structure sheaf on $X=\Sper A$. For details, we refer the readers to \cite{coste2001uniform}.

Let $A$ be a commutative ring and $B$ be an \'{e}tale $A$-algebra. Then a map $f:A\to B$ induces a local homeomorphism $f^*:\Sper B \to \Sper A$ (see, \cite[Proposition 1.8]{scheiderer2006real}). 

One first defines a presheaf $\mathcal{A}$ on $X=\Sper A$ as follows: for an open subset $U$ of $X$, we let $\mathcal{A}(U)$ be the set of equivalence classes of triples $(B,s,b)$, where $B$ is an \'{e}tale $A$-algebra, $s:U \to \Sper B$ is a continuous section of the local homeomorphism $\Sper B \to \Sper A$, and $b \in B$. Two triples $(B,s,b)$ and $(C,t,c)$ are equivalent if and only if there exists a triple $(D,u,d)$ and $A$-algebra homomorphisms $f:B \to D$, $g:C \to D$ such that $f(b)=d=g(c)$ and the following diagram commutes:
\[
\begin{tikzcd}
\Sper B
&U \arrow{d}{u} \arrow{l}[swap]{s}
\arrow{r}{t}
&\Sper C 
\\
&\Sper D \arrow{lu}{f^*}
\arrow{ru}[swap]{g^*}
\end{tikzcd}
\]
The structure sheaf $\mathcal{O}_X$ of $X=\Sper A$ is the sheafification of $\mathcal{A}$. Then $(X,\mathcal{O}_X)$ is a locally ringed space and a real scheme is a locally ringed space which is locally isomorphic to a real spectrum. 
\begin{rmk}
In some special case, the structure sheaf $\mathcal{O}_X$ of a real spectrum $X=\Sper A$ can be defined as in the case of schemes by means of \emph{real strict localizations}. See, \cite{fujita2003real} for details.
\end{rmk}

Different from the case of schemes, in general, it is not true that the functors $\Sper$ and $\Gamma$ are inverses to each other. One only has the `\emph{idempotency property}'. 

\begin{pro}\cite[Theorem 23]{coste2001uniform}\label{idempotency}
Let $A$ be a commutative ring. Then 
\[
\Gamma (\Sper \Gamma (\Sper A)) \simeq \Gamma (\Sper A).
\]
In other words, $(\Gamma \circ \Sper)$ is an idempotent endofunctor on the category of commutative rings.  
\end{pro}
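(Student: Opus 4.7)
The plan is to exhibit the idempotency by constructing a canonical unit $\eta_A \colon A \to \Gamma(\Sper A)$ and then showing that $\Gamma$ applied to the induced morphism $(\Sper \eta_A) \colon \Sper \Gamma(\Sper A) \to \Sper A$ is an isomorphism of rings. Concretely, $\eta_A$ sends $a \in A$ to the global section represented by the triple $(A, \mathrm{id}_{\Sper A}, a)$; this is a ring homomorphism by the construction of $\mathcal{O}_{\Sper A}$. To establish the claim, I would first show $\Sper \eta_A$ is a homeomorphism of topological spaces, and then upgrade this to an isomorphism of ringed spaces so that taking global sections yields the desired ring isomorphism.

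For the underlying bijection, I would use Lemma \ref{orderinglemma}, which identifies $\Sper R$ with $\Hom(R,\mathbb{S})$ for any commutative ring $R$. Pre-composition with $\eta_A$ gives a restriction map $\Hom(\Gamma(\Sper A),\mathbb{S}) \to \Hom(A,\mathbb{S})$. For injectivity, any section of $\mathcal{O}_{\Sper A}$ is locally representable by a triple $(B,s,b)$ with $B$ \'etale over $A$, so the value of $\psi \in \Hom(\Gamma(\Sper A),\mathbb{S})$ at such a section is forced by its restriction $\psi \circ \eta_A$ together with the section data $s$, using the fact that orderings lift uniquely along an \'etale map once a lift of the base point is specified. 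For surjectivity, starting from $P \in \Sper A$ (equivalently, $\varphi_P \colon A \to \mathbb{S}$), I would pass to the stalk $\mathcal{O}_{\Sper A,P}$, which plays the role of a real strict localization; its residue field carries a canonical ordering extending $\varphi_P$, and pulling back along $\Gamma(\Sper A) \to \mathcal{O}_{\Sper A,P}$ yields a homomorphism to $\mathbb{S}$ whose restriction to $A$ is $\varphi_P$.

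Next I would verify topological compatibility: the basic open sets of $\Sper A$ of the form $U(f) = \{P : f \notin -P\}$ with $f \in A$ pull back to the corresponding basic sets $U(\eta_A(f))$ on $\Sper \Gamma(\Sper A)$, and together with the bijection above this gives the homeomorphism $\Sper \eta_A$. To promote this to a sheaf isomorphism, one identifies the presheaves generating $\mathcal{O}_{\Sper A}$ and $\mathcal{O}_{\Sper \Gamma(\Sper A)}$: an \'etale $A$-algebra $B$ and the \'etale $\Gamma(\Sper A)$-algebra $B \otimes_A \Gamma(\Sper A)$ define the same equivalence classes of triples on a common open $U$, because the bijection of real spectra identifies the associated local homeomorphisms. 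Taking global sections of the resulting isomorphism of structure sheaves and composing with $\eta_A$ shows that $\Gamma(\Sper \eta_A)$ is the asserted isomorphism.

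The main obstacle is the real-algebraic Hensel-type property that orderings lift uniquely through \'etale maps once the residual ordering is fixed, which underlies both the injectivity of $\Sper \eta_A$ and the matching of \'etale algebras over $A$ versus over $\Gamma(\Sper A)$. This is the technical heart of the argument and is standard in the theory of real schemes (see \cite{coste2001uniform}, \cite{scheiderer2006real}); once it is in place, the bijection, the homeomorphism, and the sheaf isomorphism follow by routine unwinding of definitions, and the idempotency $\Gamma(\Sper \Gamma(\Sper A)) \simeq \Gamma(\Sper A)$ drops out as a formal consequence.
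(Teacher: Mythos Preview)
The paper does not prove this proposition at all: it is stated with a citation to \cite[Theorem~23]{coste2001uniform} and no argument is given. So there is no ``paper's own proof'' to compare your proposal against; the result is imported wholesale from Coste's work on the uniform real closure.

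As for the content of your sketch: the overall strategy --- build the unit $\eta_A$, show $\Sper \eta_A$ is a homeomorphism, then promote to a sheaf isomorphism --- is the natural one and is in the spirit of how such idempotency results are established. However, your write-up is a plan rather than a proof. The two places where real work is required are exactly the ones you flag as ``standard'': (i) the fact that $\Sper$ of an \'etale map is a local homeomorphism, together with unique lifting of orderings along \'etale maps to a specified point (this is what makes the stalks Henselian and drives both injectivity and surjectivity of $\Sper\eta_A$), and (ii) the identification of \'etale algebras over $A$ and over $\Gamma(\Sper A)$ at the level of real spectra. Neither of these is trivial, and both are precisely the content of the cited theorem in \cite{coste2001uniform}; invoking them as ``routine unwinding'' is circular if the goal is to supply an independent proof. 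If instead your goal was only to indicate why the statement is plausible and to point to where the substance lies, then your proposal does that adequately --- but it does not go beyond what the paper already does by citing Coste.
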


\begin{rmk}
We note that blue schemes, introduced by Lorscheid in \cite{lorscheid2012geometry}, satisfy a similar idempotency property as in Proposition \ref{idempotency}. Therefore, one may employ the idea of \emph{globalizations} in \cite{lorscheid2012geometry} in this setting. But, we do not pursue this perspective in the current paper. 
\end{rmk}

The following is well known.

\begin{lem}\label{reduction}
Let $A$ be a commutative ring, $X_r=\Sper A$, and $X=\Spec A$. Let $\red:X_r \to X$ be a function sending any ordering $P$ to $P\cap (-P)$. Then $\red$ is a well-defined continuous map. 
\end{lem}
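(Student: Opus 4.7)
The plan is to verify two things: that $P \cap (-P)$ is indeed a prime ideal (well-definedness), and that the preimage under $\red$ of a basic Zariski open set $D(f) \subseteq \Spec A$ is open in the spectral topology on $\Sper A$. Well-definedness is immediate from the fourth axiom in the definition of an ordering, which builds into the definition that the support $P \cap (-P)$ lies in $\Spec A$; so there is essentially nothing to prove there beyond quoting the axiom.

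For continuity, I would work directly with the bases of the two topologies. Let $f \in A$ and consider the basic Zariski open set $D(f) = \{\mathfrak{p} \in \Spec A \mid f \notin \mathfrak{p}\}$. For an ordering $P$, we have $\red(P) = P \cap (-P) \in D(f)$ if and only if $f \notin P \cap (-P)$, which is equivalent to the disjunction $f \notin P$ or $f \notin -P$. Translating this into the basis $\{U(g)\}_{g \in A}$ of Definition~\ref{spectraltopology}, the condition $f \notin -P$ is precisely $P \in U(f)$, and the condition $f \notin P$ is precisely $P \in U(-f)$ (since $U(-f) = \{P \mid -f \notin -P\} = \{P \mid f \notin P\}$). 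Hence
\begin{equation*}
\red^{-1}(D(f)) = U(f) \cup U(-f),
\end{equation*}
which is a union of two basic open sets, hence open. Since the sets $D(f)$ form a basis of $\Spec A$, this suffices to conclude that $\red$ is continuous.

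There is no real obstacle here; the main content is a routine unwinding of definitions together with the elementary set-theoretic identity relating $P \cap (-P)$ to the union $U(f) \cup U(-f)$. The only mild subtlety is to remember the direction of the negation in the definition of $U(f)$ (namely, $U(f)$ detects when $f$ is \emph{not} non-positive under $P$), which is why both $U(f)$ and $U(-f)$ must appear in the preimage.
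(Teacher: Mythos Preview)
Your proof is correct. The paper does not actually supply a proof of this lemma; it simply introduces it with ``The following is well known'' and states the result. Your direct verification---quoting axiom~(4) for well-definedness and computing $\red^{-1}(D(f)) = U(f) \cup U(-f)$ for continuity---is exactly the standard argument one would give, and there is nothing to compare against.
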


Let $k$ be a complete non-Archimedean valued field, $A$ be a finitely generated $k$-algebra, and $X=\Spec A$. Recall that the Berkovich analytification $X^{\textrm{an}}$ of $X$ has the following decomposition:
\[
X^{\textrm{an}}=\bigsqcup_{\mathfrak{p} \in X} \nu_\mathbb{R}(\mathfrak{p}),
\] 
where $\nu_\mathbb{R}(\mathfrak{p})$ is the set of valuations on the residue field $k(\mathfrak{p})$ at $\mathfrak{p}$ extending that of $k$. Lemma \ref{reduction} provides a similar description for $\Sper A$ as follows:
\[
\Sper A=\bigsqcup_{\mathfrak{p} \in X} \nu_\mathbb{S}(\mathfrak{p}),
\] 
where $\nu_\mathbb{S}(\mathfrak{p})$ is the space of orderings of $k(\mathfrak{p})$ as in \cite{mars1} (see, Remark \ref{finalyrmk}).

Now, we impose topology on $\mathbb{S}$ by letting $\mathcal{T}:=\{\emptyset,\{-1\},\{1\},\{1,-1\},\mathbb{S}\}$ be the set of open subsets. 

\begin{lem}\label{sgnaffinecase}
Let $X=\Spec A$ be an affine scheme over $\mathbb{Z}$ and $X_r=\Sper A$. Then $X_r$ is homeomorphic to $X(\mathbb{S})$, equipped with the fine topology.
\end{lem}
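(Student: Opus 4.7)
The plan is to exhibit an explicit homeomorphism by using the set bijection from Lemma \ref{orderinglemma} and then matching sub-bases of the two topologies.

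First, by Lemma \ref{orderinglemma}, the map $\Phi: \Sper A \to X(\mathbb{S}) = \Hom(A, \mathbb{S})$ sending an ordering $P$ to the homomorphism $\varphi_P$ is a bijection of sets, with inverse $\varphi \mapsto \varphi^{-1}(\{0,1\})$. Since $X$ is affine, Proposition \ref{affinetopology} tells us that the fine Zariski topology on $X(\mathbb{S})$ coincides with the affine (subspace) topology induced from the product $\prod_{a\in A}\mathbb{S}^{(a)}$. So I only need to show that $\Phi$ is continuous and open with respect to the spectral topology on $\Sper A$ (with basis $\{U(f)\}_{f\in A}$) and the affine topology on $X(\mathbb{S})$.

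The key computation is to unpack $U(f)$ under $\Phi$. By definition $f \in -P$ iff $-f \in P$ iff $\varphi_P(-f) \in \{0,1\}$ iff $\varphi_P(f) \in \{0,-1\}$, so
\[
\Phi(U(f)) \;=\; \{\varphi \in \Hom(A,\mathbb{S}) \mid \varphi(f) = 1\} \;=\; \mathrm{ev}_f^{-1}(\{1\}),
\]
where $\mathrm{ev}_f$ is the evaluation at $f$. Since $\{1\}$ is open in $\mathbb{S}$ by our choice of topology, this is an affine-open subset of $X(\mathbb{S})$; this shows $\Phi$ is open. For continuity, I note that the open subsets of $\mathbb{S}$ are $\emptyset, \{1\}, \{-1\}, \{1,-1\}, \mathbb{S}$, so a sub-basis of the affine topology is given by the sets $\mathrm{ev}_a^{-1}(V)$ for $a\in A$ and $V$ one of these opens. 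The computation above handles $V=\{1\}$, and an identical argument (replacing $f$ by $-f$) gives $\Phi^{-1}(\mathrm{ev}_f^{-1}(\{-1\})) = U(-f)$. The remaining case $V=\{1,-1\}$ yields $U(f)\cup U(-f)$. All three sets are open in the spectral topology, so $\Phi$ is continuous.

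The two preceding paragraphs exhibit a continuous open bijection, hence a homeomorphism, which completes the proof. The argument is essentially formal once one correctly reads off which sign condition on $\varphi_P(f)$ corresponds to the defining condition $f \notin -P$ of the basic spectral open $U(f)$; this bookkeeping is the only nontrivial point, since the whole content of the lemma lies in the coincidence of the sub-basis $\{U(f)\}$ for the spectral topology with the sub-basis $\{\mathrm{ev}_f^{-1}(\{1\})\}$ for the affine topology under the Baker--Bowler-type dictionary between orderings and $\mathbb{S}$-valued homomorphisms.
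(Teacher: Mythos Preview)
Your proof is correct and follows essentially the same approach as the paper: use Lemma~\ref{orderinglemma} for the set bijection, invoke Proposition~\ref{affinetopology} to reduce to affine topology, and then match the spectral sub-basis $\{U(f)\}$ with the evaluation sub-basis $\{\mathrm{ev}_f^{-1}(\{1\})\}$. The paper's own proof merely asserts this last matching as ``one can easily see,'' whereas you have carried out the bookkeeping explicitly (and correctly).
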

\begin{proof}
Let $X=\Spec A$. Then, we have a set-bijection, $X_r=\Hom(A,\mathbb{S})=X(\mathbb{S})$. One can easily see that, with the topology $\mathcal{T}$, the fine topology on $X(\mathbb{S})$ is exactly the spectral topology of $X_r$ under the bijection of Lemma \ref{orderinglemma}. The result now simply follows from Proposition \ref{affinetopology}. 
\end{proof}

\begin{pro}
	The functor $\Sper$, from the category of rings to the category of topological spaces, is isomorphic to the functor $\Hom(-,\mathbb{S})$. 
\end{pro}
\begin{proof}
	One may apply a similar argument as in Corollary \ref{representable}. 
\end{proof}

Let $X$ be a scheme over $\mathbb{Z}$. Then one can canonically associate a real scheme $X_r$ to $X$. Indeed, one may choose any affine open covering $\{U_i=\Spec A_i\}$ of $X$ and associate $(U_i)_r=\Sper A_i$ to each $i$, and then glue $\{(U_i)_r\}$ to obtain $X_r$ (see, \cite{scheiderer2006real}). As in the case of schemes and the Berkovich analytification, the underlying set of $X_r$ also has a nice description as a functor of points in the following sense: Recall that a real closed field is a field $k$ which is not algebraically closed, but the field extension $k(\sqrt{-1})$ is algebraically closed. Now, the underlying set of $X_r$ is the set of equivalence classes of rational points of $X$ over all real closed fields. Two rational points $f:\Spec k \to X$ and $g:\Spec k' \to X$ are equivalent if and only if there exists a real closed field extension $k''$ of $k$ and $k'$, together with a morphism $h:\Spec k'' \to X$, such that the following diagram commutes (see, \cite[\S 0.4]{scheiderer2006real}):
\[
\begin{tikzcd}
\Spec k
\arrow{rd}[swap]{f}
&\Spec k'' \arrow{d}{h}
\arrow{l}
\arrow{r}
&\Spec k' \arrow{ld}{g}
\\
&X
\end{tikzcd}
\]

\begin{lem}\label{reallemma}
Let $X$ be a scheme over $\mathbb{Z}$ and $X_r$ be the real scheme associated to $X$. Then there exists a one-to-one correspondence between the points of $X_r$ and the set of couples $(x,P_{k(x)})$, where $x \in X$, and $P_{k(x)}$ is an ordering on the residue field $k(x)$. 
\end{lem}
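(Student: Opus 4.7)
The strategy is to bootstrap from the affine case, using the machinery developed earlier in the paper, and then to glue along an affine open cover.

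In the affine case $X = \Spec A$, I would combine three identifications already available to us. First, by Lemma \ref{orderinglemma}, the points of $X_r = \Sper A$ are in bijection with $\Hom(A, \mathbb{S})$; and Proposition \ref{equivaelnceofcategories} further identifies this set with $X(\mathbb{S}) = \Hom_{\mathfrak{Lhs}}(\Spec \mathbb{S}, X)$. Second, Theorem \ref{mainlemma} (together with the subsequent remark that extends the theorem to the setting over $\mathbb{Z}$) shows that a morphism $\Spec \mathbb{S} \to X$ is precisely the data of a point $x \in X$ together with a hyperring homomorphism $\tilde{s} : k(x) \to \mathbb{S}$. Third, applying Lemma \ref{orderinglemma} again but now to the residue field $k(x)$ itself, the datum $\tilde{s}$ is equivalent to an ordering $P_{k(x)}$ on $k(x)$. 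Composing these three bijections produces the desired correspondence in the affine case.

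For a general scheme $X$ over $\mathbb{Z}$, I would fix an affine open cover $\{U_i = \Spec A_i\}$ as in the paragraph preceding the lemma. By construction, $X_r$ is obtained by gluing the affine real schemes $(U_i)_r = \Sper A_i$, so every point of $X_r$ lies in some $(U_i)_r$ and the affine case attaches to it a pair $(x, P_{k(x)})$ with $x \in U_i \subseteq X$. Conversely, given a pair $(x, P_{k(x)})$, one chooses any $U_i$ containing $x$ and produces via the affine case a point of $(U_i)_r \subseteq X_r$.

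The only genuine point to verify is that this assignment is well-defined under the gluing, i.e., that the pair attached to a point of $X_r$ is independent of the chosen affine chart, and that distinct charts containing $x$ produce the same point of $X_r$ from a given pair $(x, P_{k(x)})$. This reduces to the observation that the residue field $k(x)$ and the canonical map $\mathcal{O}_{X,x} \twoheadrightarrow k(x)$ are intrinsic to $X$ and compatible with passage to smaller affine opens, so that the ordering on $k(x)$ coming from one chart agrees with the one coming from another after refining to a common affine neighborhood. I expect this compatibility check to be the main (and essentially only) technical step in the argument; each individual bijection used in the affine case is a direct translation already established earlier in the paper.
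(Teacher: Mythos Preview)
Your proposal is correct and arrives at the same bijection as the paper, but the route is slightly different. The paper's proof reduces to the affine case in one line (``We may assume that $X$ is affine'') and then works directly with $\Hom(A,\mathbb{S})$: given $f:A\to\mathbb{S}$ it sets $\mathfrak{p}=\ker(f)$ and observes that $f$ factors through $\Frac(A/\mathfrak{p})=k(\mathfrak{p})$, giving an ordering there by Lemma~\ref{orderinglemma}; conversely it composes $A\to k(\mathfrak{p})\to\mathbb{S}$. No detour through $X(\mathbb{S})$ or Theorem~\ref{mainlemma} is made.

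Your approach instead passes through the identification $\Sper A = X(\mathbb{S}) = \Hom_{\mathfrak{Lhs}}(\Spec\mathbb{S},X)$ and then invokes Theorem~\ref{mainlemma} to unpack a morphism $\Spec\mathbb{S}\to X$ as a point plus a map on residue fields. This is logically equivalent---the factorization through $k(\mathfrak{p})$ is exactly what Theorem~\ref{mainlemma} encodes---but it imports more machinery than is needed for this particular lemma. The upside of your route is that it makes the argument uniform with the treatment of $\mathbb{K}$ and $\mathbb{T}$ elsewhere in the paper; the upside of the paper's route is that it is self-contained and two lines long. Your more careful discussion of gluing compatibility is a genuine improvement over the paper's terse ``We may assume that $X$ is affine,'' though the point is indeed routine.
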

\begin{proof}
We may assume that $X$ is affine. Let $X=\Spec A$. Then we have $X_r=\Sper A=\Hom(A,\mathbb{S})$. Now, for each $f \in \Hom(A,\mathbb{S})$, we have a prime ideal $\mathfrak{p}=\ker(f)$ and this induces a homomorphism $\tilde{f}:\Frac(A/\ker(f)) \to \mathbb{S}$ and this is just an ordering on the residue field $k(\mathfrak{p})$. Conversely, for any prime ideal $\mathfrak{p}$ and an ordering $P_{k(\mathfrak{p})}$ on the residue field $k(\mathfrak{p})$ at $\mathfrak{p}$, we have a homomorphism $\pi:A\to k(\mathfrak{p})$ and $f:k(\mathfrak{p}) \to \mathbb{S}$. By composing these two, we obtain a homomorphism $f\circ \pi :A \to \mathbb{S}$. Clearly, these two constructions are inverses to each other. 
\end{proof}

\begin{pro}\label{realanaly}
Let $X$ be a scheme over $\mathbb{Z}$ and $X_r$ be the associated real scheme. Then $X_r$ and $X(\mathbb{S})$ are homeomorphic.
\end{pro}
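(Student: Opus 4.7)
The plan is to mimic the arguments used for the Krasner case (Proposition \ref{SPECrepresentable}) and the tropical case (Corollary \ref{analytificationhomeo}), reducing the global statement to the affine one which has already been established in Lemma \ref{sgnaffinecase}. The affine case plus an open-covering argument will do all the work.

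First I would construct the set-theoretic bijection $\psi:X_r\to X(\mathbb{S})$. By Lemma \ref{reallemma}, a point of $X_r$ corresponds to a pair $(x,P_{k(x)})$ consisting of a point $x\in X$ and an ordering on the residue field $k(x)$, i.e.\ a hyperring homomorphism $k(x)\to\mathbb{S}$. By Theorem \ref{mainlemma} (applied with $k=\mathbb{Z}$, see the remark following that theorem), such a pair is exactly the data of a morphism $\Spec\mathbb{S}\to X$ in $\mathfrak{Lhs}$, i.e.\ an element of $X(\mathbb{S})$. This gives the required bijection and it is functorial in $X$.

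Next I would check that $\psi$ is a homeomorphism. Fix an affine open covering $\{U_i=\Spec A_i\}$ of $X$. The topology $\mathcal{T}=\{\emptyset,\{1\},\{-1\},\{-1,1\},\mathbb{S}\}$ on $\mathbb{S}$ satisfies the hypothesis of Proposition \ref{openembedding}, namely $\mathbb{S}\setminus\{0\}=\{-1,1\}\in\mathcal{T}$. Hence $\{U_i(\mathbb{S})\}$ is an open cover of $X(\mathbb{S})$ in the fine Zariski topology, and each $U_i(\mathbb{S})$ carries the affine topology by Proposition \ref{affinetopology}. On the other hand, by construction of the real scheme via gluing $\Sper A_i$'s, the subsets $(U_i)_r$ form an open cover of $X_r$. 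The bijection $\psi$ restricts on each piece to the affine bijection $(U_i)_r\xrightarrow{\sim}U_i(\mathbb{S})$, which is a homeomorphism by Lemma \ref{sgnaffinecase}. Since both covers are open and $\psi$ respects them, $\psi$ is a homeomorphism globally.

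I do not expect a serious obstacle. The only point deserving care is verifying that the restrictions of $\psi$ agree with the affine homeomorphisms on overlaps $U_i\cap U_j$ — but this is automatic from the functoriality of $X\mapsto X(\mathbb{S})$ (Proposition \ref{inducedlemma}) together with the functoriality of the real-spectrum construction under open immersions, so the gluing data on both sides match tautologically. One could equivalently phrase the argument by saying that $\psi$ is a continuous bijection (each $\psi|_{(U_i)_r}$ is continuous into $U_i(\mathbb{S})\hookrightarrow X(\mathbb{S})$, and continuity is local on the source) whose inverse is continuous by the same local argument applied to $\psi^{-1}$.
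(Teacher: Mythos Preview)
Your proposal is correct and follows essentially the same route as the paper: establish the set bijection via Lemma \ref{reallemma} (together with Theorem \ref{mainlemma}), take an affine open cover $\{U_i=\Spec A_i\}$, invoke Proposition \ref{openembedding} (using that $\mathbb{S}\setminus\{0\}$ is open) to get the open cover $\{U_i(\mathbb{S})\}$, and reduce to the affine homeomorphism of Lemma \ref{sgnaffinecase}. Your write-up is in fact more careful than the paper's own proof about the compatibility on overlaps, but the strategy is identical.
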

\begin{proof}
Let $i:X_r \to X(\mathbb{S})$ be the set-bijection as in Lemma \ref{reallemma}. From the definition of $X_r$, we can choose an affine open covering $\{U_j\}$ of $X$, where $U_j=\Spec A_j$, such that $X_r$ can be covered by $\{V_j=\Sper A_j\}$. Consider the resection $i_{V_j}$ of $i$ to $V_j$. In this case, we have $i_{V_j}:V_j \to X(\mathbb{S})$ and, in fact, the image of $i_{V_i}$ is $U_i(\mathbb{S})$. From Proposition \ref{openembedding}, we may assume that $X$ is affine and the result follows from Lemma \ref{sgnaffinecase}. 
\end{proof}

\begin{cor}
Let $\mathcal{R}$ be the functor from the category of schemes to the category of topological spaces sending a scheme $X$ to the underlying topological space $|X_r|$ of the associated real scheme $X_r$. Then $\mathcal{R}$ is isomorphic to the functor $\Hom(\Spec \mathbb{S},-)$. 
\end{cor}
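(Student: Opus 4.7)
The plan is to assemble the corollary directly from the results already established in this section, together with the general continuity statement for fine Zariski topology. Concretely, Proposition \ref{realanaly} already provides, for each scheme $X$ over $\mathbb{Z}$, a homeomorphism $\iota_X : |X_r| \xrightarrow{\sim} X(\mathbb{S})$, where $X(\mathbb{S})$ is equipped with fine Zariski topology. So the only thing left to verify is naturality: for any morphism $f : Y \to X$ of schemes over $\mathbb{Z}$, the square
\begin{equation*}
\begin{tikzcd}
|Y_r| \arrow{r}{\iota_Y} \arrow{d}[swap]{|f_r|} & Y(\mathbb{S}) \arrow{d}{f(\mathbb{S})} \\
|X_r| \arrow{r}{\iota_X} & X(\mathbb{S})
\end{tikzcd}
\end{equation*}
commutes, where $f(\mathbb{S})$ is the continuous map produced by Proposition \ref{inducedlemma}.

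To verify naturality, I would first reduce to the affine case by choosing compatible affine open coverings $\{U_i = \Spec A_i\}$ of $X$ and $\{V_j = \Spec B_j\}$ of $Y$ with $f(V_j) \subseteq U_{i(j)}$ for some function $i(\cdot)$. On each $V_j$, the restriction of $f$ corresponds to a ring homomorphism $\varphi_{ji} : A_{i(j)} \to B_j$, and via Lemma \ref{orderinglemma} both $\iota_{U_{i(j)}}$ and $\iota_{V_j}$ are identified with the identity on $\Hom(-,\mathbb{S})$. Under these identifications, the map $|f_r|$ restricts to precomposition with $\varphi_{ji}$, which is exactly the definition of $f(\mathbb{S})$ via Proposition \ref{equivaelnceofcategories}. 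Hence the square commutes on each $V_j$, and by the gluing in the proof of Proposition \ref{realanaly} (together with Proposition \ref{openembedding}) it commutes globally.

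The only potentially subtle point is compatibility of the gluings, but this is automatic because both sides of the square are constructed by gluing along the same affine pieces using the same transition maps (the transition maps for $X_r$, $Y_r$ come from the \'etale presheaf description, while those for $X(\mathbb{S})$, $Y(\mathbb{S})$ come from Proposition \ref{openembedding} applied to the open immersion $U_i \hookrightarrow X$; in both cases they restrict to the identity on the set of orderings). Therefore $\iota : \mathcal{R} \Rightarrow \Hom_{\mathfrak{Lhs}}(\Spec \mathbb{S}, -)$ is a natural isomorphism of functors to $\mathfrak{Top}$, which is precisely the claim that $\mathcal{R}$ is representable by $\mathbb{S}$. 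I do not expect any genuine obstacle; the work has already been done in Proposition \ref{realanaly} and Proposition \ref{inducedlemma}, and the corollary is essentially a packaging statement parallel to the Krasner and tropical analogues proved earlier.
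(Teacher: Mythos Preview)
Your proposal is correct and follows essentially the same approach as the paper: invoke Proposition \ref{realanaly} for the homeomorphism and Proposition \ref{inducedlemma} for functoriality in $\mathfrak{Top}$. The paper's proof is a one-liner that asserts naturality follows from the functoriality of $X \mapsto X(\mathbb{S})$, whereas you spell out the naturality square and reduce it to the affine case; this extra care is reasonable but not a different argument.
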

\begin{proof}
One may apply a similar argument as in Corollary \ref{representable}. 
\end{proof}

Conversely, given a real scheme $X_\mathfrak{R}$, one can associate a scheme $X_\mathfrak{R}^{red}$; fix an affine open covering $\{V_i=\Sper A_i\}$ of $X_\mathfrak{R}$, we may associate $U_i=\Spec A_i$ for each $i$ and glue these to obtain a scheme $X_\mathfrak{R}^{red}$ over $\mathbb{Z}$.

\begin{pro}\label{reduct}
Let $X_\mathfrak{R}$ be a real scheme. Then the reduction map $\mathbf{red}:X_\mathfrak{R}\to X_\mathfrak{R}^{red}$ is continuous. Moreover, the real scheme $(X_\mathfrak{R}^{red})_r$ associated the the scheme $X_\mathfrak{R}^{red}$ (as in Proposition \ref{realanaly}) is homeomorphic to $X_\mathfrak{R}$. 
\end{pro}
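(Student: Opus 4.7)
The plan is to reduce both assertions to statements about affine pieces, where the relevant identifications have already been established in the paper. Fix an affine open covering $\{V_i = \Sper A_i\}$ of $X_\mathfrak{R}$ chosen as in the construction of $X_\mathfrak{R}^{red}$, so that by definition the scheme $X_\mathfrak{R}^{red}$ is obtained by gluing the affine schemes $U_i := \Spec A_i$ along the same transition data. Set $\mathbf{red}_i \colon V_i \to U_i$ to be the affine reduction sending an ordering $P$ of $A_i$ to its support $P \cap (-P) \in \Spec A_i$; Lemma~\ref{reduction} tells us this is a well-defined continuous map.

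For the first claim, I would verify that the maps $\mathbf{red}_i$ assemble into a global continuous map $\mathbf{red} \colon X_\mathfrak{R} \to X_\mathfrak{R}^{red}$. The key point is that, over a double overlap $V_i \cap V_j$, the transition isomorphisms on the real spectrum side and on the spectrum side are induced by the very same ring-theoretic localization data (that is how $X_\mathfrak{R}^{red}$ is constructed from $X_\mathfrak{R}$), so the support map commutes with them. Continuity is then local on $X_\mathfrak{R}$ and follows from Lemma~\ref{reduction} on each $V_i$.

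For the second claim, the plan is to combine Proposition~\ref{realanaly} with Lemma~\ref{sgnaffinecase}. By Proposition~\ref{realanaly}, we have a homeomorphism
\[
(X_\mathfrak{R}^{red})_r \;\cong\; X_\mathfrak{R}^{red}(\mathbb{S}),
\]
with the right hand side equipped with fine Zariski topology. Proposition~\ref{openembedding} (applied with $H = \mathbb{S}$, whose complement of zero is open) guarantees that the affine covering $\{U_i\}$ of $X_\mathfrak{R}^{red}$ produces an open covering $\{U_i(\mathbb{S})\}$ of $X_\mathfrak{R}^{red}(\mathbb{S})$. Lemma~\ref{sgnaffinecase} yields homeomorphisms $\Sper A_i \cong U_i(\mathbb{S})$, and these are nothing other than $V_i \cong U_i(\mathbb{S})$. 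The task is then to check that these local homeomorphisms glue: on an overlap $V_i \cap V_j$, both identifications are induced by the same localization data used to construct $X_\mathfrak{R}^{red}$ from $X_\mathfrak{R}$, so the cocycle condition is automatic. Assembling the $V_i \cong U_i(\mathbb{S})$ therefore produces a homeomorphism $X_\mathfrak{R} \cong X_\mathfrak{R}^{red}(\mathbb{S}) \cong (X_\mathfrak{R}^{red})_r$.

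The main (and really only) obstacle is the bookkeeping on overlaps: one must check that the gluing data used to build $X_\mathfrak{R}^{red}$ from the affine pieces $\Spec A_i$ is compatible, under the affine bijection of Lemma~\ref{orderinglemma}, with the gluing data originally used to present $X_\mathfrak{R}$ as $\bigcup \Sper A_i$. Once this compatibility is in hand, everything else is formal from Lemma~\ref{reduction}, Lemma~\ref{sgnaffinecase}, Proposition~\ref{openembedding} and Proposition~\ref{realanaly}.
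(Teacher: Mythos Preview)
Your proposal is correct and follows essentially the same strategy as the paper: reduce to the affine case, where the first assertion is Lemma~\ref{reduction} and the second is a tautology. Your write-up is considerably more explicit than the paper's two-line proof, which simply asserts that one may assume $X_\mathfrak{R}$ affine and that the second statement is ``clear from the definition of $(X_\mathfrak{R}^{red})_r$.''

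One minor difference worth noting: for the second assertion you route the argument through $\mathbb{S}$-rational points, invoking Proposition~\ref{realanaly}, Proposition~\ref{openembedding}, and Lemma~\ref{sgnaffinecase}. The paper's intended argument is more direct: the construction $X_\mathfrak{R} \mapsto X_\mathfrak{R}^{red} \mapsto (X_\mathfrak{R}^{red})_r$ replaces each $\Sper A_i$ by $\Spec A_i$ and then by $\Sper A_i$ again, with the same gluing data throughout, so the composite is the identity on the nose. Your detour through $X_\mathfrak{R}^{red}(\mathbb{S})$ is perfectly valid but unnecessary here; the advantage of the paper's route is that no appeal to the fine topology machinery is needed at all.
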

\begin{proof}
We may assume that $X_\mathfrak{R}$ is affine and the first statement follows from Lemma \ref{reduction}. The second statement is clear from the definition of $(X_\mathfrak{R}^{red})_r$. 
\end{proof}

\begin{cor}
Let $X_\mathfrak{R}$ be a real scheme. Then $(X_\mathfrak{R}^{red})_r(\mathbb{S})$ is homeomorphic to $X_\mathfrak{R}$. 
\end{cor}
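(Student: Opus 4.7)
The plan is to assemble the corollary as an immediate chain of two homeomorphisms coming from the two preceding results. Because $X_\mathfrak{R}^{red}$ is, by construction, a genuine scheme over $\mathbb{Z}$, Proposition \ref{realanaly} applies to it and gives a natural homeomorphism between the associated real scheme $(X_\mathfrak{R}^{red})_r$ and the set $X_\mathfrak{R}^{red}(\mathbb{S})$ of $\mathbb{S}$-rational points equipped with fine Zariski topology. Under the natural reading of the notation, $(X_\mathfrak{R}^{red})_r(\mathbb{S})$ is precisely this space of $\mathbb{S}$-points (one can justify the identification either by viewing $(-)_r(\mathbb{S})$ as the composite functor, or via the idempotency property in Proposition \ref{idempotency}, which identifies the underlying scheme of $(X_\mathfrak{R}^{red})_r$ with $X_\mathfrak{R}^{red}$ itself up to canonical homeomorphism).

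Next I would invoke the second half of Proposition \ref{reduct}, which directly asserts that the real scheme $(X_\mathfrak{R}^{red})_r$ associated to the scheme $X_\mathfrak{R}^{red}$ is homeomorphic to the original real scheme $X_\mathfrak{R}$. Composing the two homeomorphisms yields
\[
(X_\mathfrak{R}^{red})_r(\mathbb{S}) \;\cong\; (X_\mathfrak{R}^{red})_r \;\cong\; X_\mathfrak{R},
\]
which is the claim. Functoriality with respect to morphisms of real schemes follows from the functoriality of both Proposition \ref{realanaly} (established via Proposition \ref{inducedlemma}) and Proposition \ref{reduct}, so no extra bookkeeping is needed.

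The main potential obstacle is not mathematical but notational: one has to be comfortable that $(X_\mathfrak{R}^{red})_r(\mathbb{S})$ and $X_\mathfrak{R}^{red}(\mathbb{S})$ denote the same topological space. Once this is clarified, the corollary is a purely formal splicing of the two propositions, and no further argument is required.
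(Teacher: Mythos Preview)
Your proposal is correct and follows exactly the same route as the paper, which simply says the corollary follows directly from Propositions~\ref{realanaly} and~\ref{reduct}. Your remark about the notational identification of $(X_\mathfrak{R}^{red})_r(\mathbb{S})$ with $X_\mathfrak{R}^{red}(\mathbb{S})$ is a helpful clarification that the paper leaves implicit.
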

\begin{proof}
This directly follows from Propositions \ref{realanaly} and \ref{reduct}. 
\end{proof}

\begin{rmk}
Although we only state the case when a scheme is over $\mathbb{Z}$, one can easily prove similar results for a scheme over a formally real field $F$. Note that $F$ should be a formally real field since otherwise, there is no homomorphism from $F$ to $\mathbb{S}$. 
\end{rmk}

\begin{rmk}\label{finalyrmk}
In \cite{hochster1969prime}, Hochster characterized topologically the essential image of the functor $\Spec$ by introducing the notion of spectral spaces. One has a similar result in real algebraic geometry. To be precise, let $F$ be a formally real field. Then the real spectrum $X_F=\Sper F$ is called the space of orderings on $F$. Then $X_F$ becomes a Stone space (or Boolean space), i.e., compact, Hausdorff, and totally disconnected space. Conversely, it is proved by T.~Craven in \cite{craven1975boolean} that for any Stone space $X$, there exists a formally real field $F$ such that $X$ is homeomorphic to $X_F$. Finally, we remark that if the characteristic of $F$ is not equal to $2$, then $X_F$ can be realized as the set of minimal prime ideals of $W(F)$ (the Witt ring of quadratic forms of $F$) and furthermore $X_F$ is homeomorphic to the set of minimal prime ideals of $W(F)$ equipped with the Zariski topology. For more details, we refer the readers to \cite{lam2005introduction}. It would be interesting to investigate these perspectives in terms of geometry of $\mathbb{S}$ following the ideas in \cite{mars1},  \cite{mar2}, and \cite{gladki2017witt}. 
\end{rmk}


\begin{rmk}
In \cite{con3}, Connes and Consani introduce the notion of tensor products for $\mathbb{K}$ and $\mathbb{S}$ in a certain restricted case. For instance, if $X=\Spec A$ is an affine scheme over a field $k$, then `a scalar extension' $X_\mathbb{K}$ is defined to be $\Spec (A/k^\times)$. When $k$ is a formally real field with a fixed homomorphism $\varphi:k \to \mathbb{S}$, $X_\mathbb{S}=\Spec (A/P)$, where $P$ is the ordering corresponding to $\varphi$. One may develop this approach further to incorporate the notion of tensor products with hyperfields with the approach taken in the current paper. 
\end{rmk}

\bibliography{geobib}\bibliographystyle{alpha}

\end{document}